\DeclarePairedDelimiter{\ceil}{\lceil}{\rceil}
\DeclarePairedDelimiter{\floor}{\lfloor}{\rfloor}
\title{One-relator quotients of Partially Commutative Groups
}
\author{ {Andrew J. Duncan}
 \and 
Arye Juh\'{a}sz}
\def\D{\Delta }
\def\d{\delta }
\def\b{\beta }
\def\y{\zeta }
\def\i{\iota }
\def\l{\lambda }
\def\L{\Lambda }
\def\r{\rho }
\def\x{\chi }
\def\e{\varepsilon }
\def\G{\Gamma }
\def\g{\gamma }
\def\a{\alpha }
\def\t{\tau }
\def\s{\sigma }
\def\S{\Sigma }
\def\pd{\partial}
\newtheorem{theorem}{Theorem}[section]
\newtheorem{lemma}[theorem]{Lemma}
\newtheorem{corollary}[theorem]{Corollary}
\newtheorem{prop}[theorem]{Proposition}
\theoremstyle{definition}
\newtheorem{defn}[theorem]{Definition}
\newtheorem{exam}[theorem]{Example}
\newenvironment{example}{\begin{exam} \rm}{\end{exam}}
\numberwithin{equation}{section}
\newcommand{\bs}{\backslash}
\newcommand{\ZZ}{\ensuremath{\mathbb{Z}}}
\newcommand{\GG}{\ensuremath{\mathbb{G}}}
\newcommand{\FF}{\ensuremath{\mathbb{F}}}
\renewcommand{\AA}{\ensuremath{\mathbb{A}}}
\newcommand{\supp}{\operatorname{supp}}
\newcommand{\lk}{\operatorname{lk}}
\newcommand{\st}{\operatorname{st}}
\newcommand{\hnn}{\operatorname{HNN}}
\newcommand{\maln}{\operatorname{Maln}}
\newcommand{\cD}{\mathcal{D}}
\newcommand{\cR}{\mathcal{R}}
\newcommand{\cS}{\mathcal{S}}
\newcommand{\la}{\langle}
\newcommand{\ra}{\rangle}
\newcommand{\maps}{\rightarrow}
\newcommand{\sep}{\operatorname{Sep}}
\newcommand{\Li}{L^{\ref{it:composed1}}}
\newcommand{\Lii}{L^{\ref{it:composed2}}}
\newcommand{\li}{l^{\ref{it:composed1}}}
\newcommand{\lii}{l^{\ref{it:composed2}}}
\newcommand{\be}{\begin{enumerate}}
\newcommand{\ee}{\end{enumerate}}
\newcommand{\biz}{\begin{itemize}}
\newcommand{\eiz}{\end{itemize}}
\newenvironment{ajd1}{\noindent\color{blue}}{}
\newcommand{\ajd}[1]{\begin{ajd1} #1 \end{ajd1}}
\newenvironment{arye1}{\noindent\color{blue}}{}
\begin{document}
\maketitle
\begin{abstract}
  We generalise a key result of one-relator group theory, namely
  Magnus's Freiheitssatz, to partially commutative groups, under
  sufficiently strong conditions on the relator. The main theorem shows that under
  our conditions, on an element $r$ of a partially commutative group $\GG$,
  certain Magnus subgroups embed in the quotient $G=\GG/N(r)$; that if $r=s^n$ has root $s$ in $\GG$ then
   the order of $s$ in $G$ is $n$, and under slightly stronger conditions that the
  word problem of $G$ is decidable. We also give conditions under which the question of which Magnus
  subgroups of $\GG$ embed in $G$ reduces to the same question in the minimal parabolic subgroup of $\GG$ containing $r$.
  In many cases this allows us to characterise Magnus subgroups which embed in $G$, via a condition on $r$ and the
  commutation graph of $\GG$, and to find further examples of quotients $G$ where the word and conjugacy problems are decidable.
  We give evidence that situations in which our main theorem applies are not uncommon, by proving that for cycle graphs
  with a chord $\G$, almost all cyclically reduced elements of the partially commutative group $\GG(\G)$ satisfy the conditions of the theorem.
\end{abstract}
{\let\thefootnote\relax\footnotetext{{\bf Keywords.} One-relator group theory; partially commutative groups, right-angled Artin groups, HNN-extensions of groups.}}
{\let\thefootnote\relax\footnotetext{
{\bf Mathematics Subject Classification (2010)}: 20F05; 20F36, 20E06
}}
\section{Introduction}
\label{sec:intro}
Partially commutative groups have been extensively studied
in several different guises and are variously known as semi-free groups; 
right-angled Artin groups; trace groups;
graph groups or even locally free groups.  
Let $\G$ be a finite, undirected, simple graph. Let $A=V(\G)$ be the set of vertices
of $\G$ and let $\FF(A)$ be the free group on $A$.
For elements $g,h$ of a group we denote the commutator $g^{-1}h^{-1}gh$
of $g$ and $h$ by $[g,h]$. Let
\[
R=\{[x_i,x_j]\in \FF(A)\mid x_i,x_j\in A \textrm{ and there is an edge from } x_i \textrm{ to } x_j
\textrm{ in } \G\}.
\]
We define
the {\em free partially commutative group with (commutation) graph } $\G$ to be the 
group $\GG=\GG(\G)$ with free presentation
$\left< A\mid R\right>.$
 We shall refer to
finitely generated free partially commutative groups as {\em partially
  commutative} groups. (Strictly speaking we have defined the class of \emph{finitely generated} partially commutative groups.) 
 The class of partially commutative groups contains
finitely generated free, and free Abelian groups; has provided several crucial examples in the theory 
of finitely presented groups and has applications both in mathematics and computer science. For an introduction and survey of the literature see \cite{Ch} or \cite{EKR}.  It emerges, from the work of Sageev, Haglund, Wise, Agol and others, that many 
well-known families of groups  virtually embed into partially commutative groups: among these 
are Coxeter groups, certain one-relator groups with torsion, limit groups, and fundamental groups of closed $3$-manifold groups 
(see for example \cite{W} for details and references). It is therefore natural to consider related classes, such as 
their one-relator quotients as we do here. 

There are generalisations of one-relator group theory in several directions; for instance  
to two-relator groups, to one-relator quotients of surface groups \cite{HS, ADL} and  
very successfully to one-relator quotients of free products of groups, see e.g. \cite{
Br2,
H1,H2} and see \cite{FR} for details and fuller references. 
A key result 
of one-relator group theory 
is Magnus's Freiheitssatz which states that if $\FF$ is the  free group of rank $n$,
and $r$ is an element of  $\FF$, involving  every generator of $\FF$,  
 then the subgroup generated by $n-1$ of the generators embeds in the one-relator group $\FF/N(r)$. 
(We  use $N(r)$ to denote the normal closure of $r$ in a given group.)  
  The Freiheitssatz has several immediate
consequences: for instance if $r=s^m$, where $s$ is not a proper power in
 $\FF$ then $s$ has order precisely $m$ in $\FF/N$;  if  $m=1$ then $\FF/N$ is torsion-free;  
 if $m>1$ then any element of finite order in $\FF/N$ is conjugate to a power of $s$; and 
 one-relator
groups have solvable word problem (see for example \cite{FR}). 
Turning to one-relator quotients of partially commutative groups, 
Antolin and Kar \cite{AK} proved that a Freiheitssatz holds for one-relator quotients of 
starred partially commutative groups: $\GG(\G)$ is \emph{starred} if $\G$ 
has no full subgraph isomorphic to the four cycle or the path graph on four vertices.
 More generally they 
prove versions of the  Freiheitssatz for one-relator quotients of starred partially commutative products and show that 
 the word problem is solvable, both for one-relator quotients of starred partially commutative groups and 
of starred partially commutative products of polycyclic groups.

Here, with appropriate restrictions on the relator, 
we show that 
a Freiheitssatz holds for one-relator quotients of any partially commutative group and that, with
 slightly stronger restrictions, the word problem is solvable. 
%
A summary of the situations in which a Freiheitssatz is known to hold, combining our results and those of \cite{AK}, is given in the
Examples and final summary of this section.  

In order to state our main results we need some definitions. 
If 
$w$ is an element of the free monoid $(A\cup A^{-1})^*$ then we denote by $\supp(w)$ the set of
elements $x\in A$ such that $x$ or $x^{-1}$ occurs in $w$. For an element $g\in \GG$ we define
$\supp(g)$ to be equal to $\supp(w)$, where $w$ is an element of $(A\cup A^{-1})^*$ 
of minimal length amongst all those elements representing $g$. For more detail (including the 
fact that $\supp(g)$ is well-defined) see Section \ref{sec:prelim}. An element $g$ of $\GG$ is
said to be \emph{cyclically minimal} if it is represented by a word $w \in (A\cup A^{-1})^*$ 
of length no greater than any other word representing an element of the conjugacy class of $g$.
 (See Section \ref{sec:prelim}, Lemma \ref{lem:cycgeod}.) 

 A subset $Y$ of $A$ is called a \emph{clique}
 if the full subgraph $\G_Y$ of $\G$ generated by $Y$ is a complete graph and \emph{independent} if $\G_Y$ 
is a null graph.  
For $x\in A$ define the \emph{link} of $x$ to 
be $\lk(x)=\{y\in A| d(x,y)=1\}$ (edges of the graph have length $1$) 
and the \emph{star} of $x$ to be $\st(x)=\lk(x)\cup \{x\}$. The definitions 
of $t$-thick and $t$-root appear in Section \ref{sec:tthick}, Definition \ref{def:tthick} and Section \ref{sec:redn}, 
Definition \ref{def:troot}, respectively. 

To illustrate some of the possibilities we give some examples. 
\begin{example}\label{ex:abcase}
\be[label=\arabic*.,ref=\arabic*]
\item\label{it:abcase1}  If 
$\G$ is a complete or null graph then $\GG=\GG(\G)$ is free Abelian or free, respectively. In this case, 
if $s\in \GG$ is not a proper power, $t\in \supp(s)$ and $r=s^n$, for some positive integer $n$,    
then $\la A\bs \{t\}\ra$ embeds in $G=\GG/N(r)$ and $s$ has order $n$ in $G$; 
using standard results from the theory of  finitely generated Abelian groups, or of one-relator groups, as appropriate.
\item\label{it:abcase2new}
  Let $A=\{a,b,c\}$, $R=\{[a,b],[b,c]\}$ and
  $\GG=\la A|R\ra\cong \FF_2\times \ZZ$. Let $s=abc$, let $N$ be the normal closure of $s^n$ in $\GG$, for some $n>0$,
  and let $G=\GG/N$.  
  Here $b\in \supp(s)$ and $r=(ac)^nb^n$, so $[(ac)^n,a]=1$ in $G$, but $[(ac)^n,a]\neq 1$ in $\GG$ (as $C_\GG(a)=\la a,b\ra$).
  Hence $\la A\bs \{b\}\ra=\la a,c\ra$ does not embed in $G$.
\ee
\end{example}

\begin{theorem}\label{thm:main}
Let $\GG=\la A|R\ra$ and 
let $s\in \GG$ 
be a cyclically minimal element such that, for some $t\in \supp(s)$, 
\be[label=\arabic*.,ref=\arabic*]
\item \label{it:main_hyp1}
$\lk(t)$ is a clique (or empty),
\item \label{it:main_hyp2}
  $s$ is $t$-thick,
\item \label{it:main_hyp3} $s\notin \la \st(t) \ra$  and 
\item \label{it:main_hyp4}
$s$ is a $t$-root.
\ee
Then, for  $n\ge 3$, 
\be[label=(\alph*)]
\item\label{it:main1}
$\la A\bs\{t\}\ra$ embeds in $\GG/N$,
 where $N$ denotes the normal closure of $s^n$ in $\GG$, and 
\item\label{it:main2}
  the order of $sN$ in $\GG/N$ is $n$.
\ee
Moreover if $n\ge 4$ then the word problem is solvable in $\GG/N$.   
\end{theorem}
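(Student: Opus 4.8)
The plan is to realise $G=\GG/N$ as an HNN-extension (or a graph of groups more generally) in which the vertex groups are one-relator quotients of smaller partially commutative groups, and then to run an induction. First I would exploit hypothesis \ref{it:main_hyp1}: since $\lk(t)$ is a clique, $\st(t)$ generates a subgroup $\la \st(t)\ra \cong \la \lk(t)\ra \times \la t\ra$, with $\la \lk(t)\ra$ free abelian, and $\GG$ splits as an HNN-extension (or amalgam) of $\la A\bs\{t\}\ra$ over $\la \lk(t)\ra$, with stable letter $t$; concretely $\GG = \la A\bs\{t\}\ra \ast_{\la\lk(t)\ra}$ with $t$ commuting with $\lk(t)$. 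Using hypotheses \ref{it:main_hyp2} and \ref{it:main_hyp3} — that $s$ is $t$-thick and $s\notin\la\st(t)\ra$ — I would put $s$ into a normal form with respect to this splitting having syllable length $\ge 2$ in $t$, and then pass to the quotient by $N(s^n)$. The point of $t$-thickness should be precisely to guarantee that, after conjugating, $s$ has a ``nice'' cyclically reduced form relative to the $t$-HNN structure so that the relator interacts controllably with the edge group $\la\lk(t)\ra$.

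Next I would analyse $G=\GG/N$ as a one-relator product / one-relator HNN-type quotient: $G$ is generated by $\la A\bs\{t\}\ra$ together with $t$, subject to $t$ commuting with $\lk(t)$ and to $s^n=1$. The aim of part \ref{it:main1} is to show $\la A\bs\{t\}\ra$ embeds; this is the Freiheitssatz content. I would try to show that $G$ is (isomorphic to) an HNN-extension with base group $\la A\bs\{t\}\ra$ and some associated subgroups determined by the occurrences of $t$ in $s$, with the relation $s^n=1$ absorbed into a one-relator quotient of a free product of copies of $\la A\bs\{t\}\ra$ amalgamated along $\la\lk(t)\ra$. This is the standard Magnus–Moldavanskii rewriting, adapted to the partially commutative setting; the role of the $t$-root hypothesis \ref{it:main_hyp4} is to ensure that $s$ is not a proper power ``through'' $t$, so the resulting associated subgroups are what one expects and the Freiheitssatz/torsion statements of one-relator product theory (Howie, Brodskii–Howie–Short, or the free-product-with-$n$-th-power results for $n\ge3$) apply to the vertex groups. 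Part \ref{it:main2}, that $sN$ has order exactly $n$, would then follow: order $\le n$ is clear, and order $\ge n$ follows because in the one-relator quotient of the amalgamated free product the image of $s$ has order exactly $n$ (this is where $n\ge 3$ is used, via the torsion theorem for one-relator products), together with the embedding of \ref{it:main1}.

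For the word problem with $n\ge4$: once $G$ is exhibited as an HNN-extension (or amalgam) with base a partially commutative group $\la A\bs\{t\}\ra$ (whose word problem is solvable) and associated subgroups that are parabolic, hence with solvable membership problem, I would apply the standard result that such an HNN-extension has solvable word problem provided the vertex groups do and the associated subgroups have solvable generalised word problem. The extra hypothesis $n\ge 4$ (rather than $n\ge 3$) presumably enters because one needs the vertex one-relator product to fall into the ``fourth-or-higher power'' regime of Howie, where the structure is understood well enough (e.g. hyperbolicity / a linear isoperimetric-type bound, or Newman's spelling theorem analogue) to give an effective solution; I would invoke that directly for the vertex groups and then propagate solvability up the HNN-extension.

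The main obstacle I expect is controlling how the relator $s^n$ sits relative to the $t$-splitting — specifically, ensuring that after cyclic reduction $s$ has the required syllable structure and that the associated/edge subgroups arising in the Moldavanskii rewriting are again parabolic (or at least tractable) subgroups of $\la A\bs\{t\}\ra$ rather than some uncontrolled subgroups. This is exactly what hypotheses \ref{it:main_hyp2}, \ref{it:main_hyp3}, \ref{it:main_hyp4} are engineered to deliver, so the bulk of the work is in translating those hypotheses into the precise normal-form statement needed to invoke the one-relator-product machinery; the rest is assembling known HNN/amalgam results.
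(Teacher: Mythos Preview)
Your plan diverges substantially from the paper's proof, and the approach you sketch has a genuine gap. You propose to run a Magnus--Moldavanskii rewriting and then invoke one-relator \emph{product} theory (Howie, Brodskii--Howie--Short, the $n\ge 3$ and $n\ge 4$ power theorems). But those results are for one-relator quotients of \emph{free} products $A*B$; here the relevant splitting of $\GG$ along $t$ is an HNN-extension with \emph{non-trivial} edge group $U=\la\lk(t)\ra$ (a free abelian group of rank $\ge 1$ in the interesting case). After the Moldavanskii step the base you would need is a one-relator quotient of a tree of copies of $\la A\bs\{t\}\ra$ \emph{amalgamated along $U$}, not a free product, and there is no off-the-shelf Freiheitssatz or torsion theorem for that situation. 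Your proposal acknowledges this (``parabolic or at least tractable'') but does not explain how to close the gap; the locally-indicable machinery you allude to again requires a free product.

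The paper does something quite different: there is no induction and no appeal to one-relator product theory. Instead it develops small cancellation theory directly over the HNN-presentation $F=\hnn(t)$ (in the spirit of \cite{J}). The key device is a ``reduction map'' $\sigma:F\to F_D=\FF(D^+)*\la t\ra$, where $D$ is a set of $U$-double-coset representatives in $H=\la A\bs\{t\}\ra$. The hypothesis that $s$ is $t$-thick says each $H$-syllable of $s$ lies in $U\cup\maln_H(U)$, which is exactly what is needed for $\sigma$ to carry pieces to pieces; the $t$-root hypothesis says $\sigma(s)$ is not a proper power in $F_D$, so by \cite{DH} it has uniquely positioned subwords $\bar a,\bar b$. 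These give a ``$\sep$'' marking on boundaries of regions, and one shows every piece contains at most one $\sep$-point (Lemma~\ref{lem:piecesep}). Since $|\sep(\D)|=2n$, the symmetrised set $\widetilde\cR$ satisfies $C(2n)$ (Proposition~\ref{prop:sc}). For $n\ge 3$ this is $C(6)$: Greendlinger's lemma forces any strongly reduced disc diagram with boundary in $\GG_t$ to contain a shell whose outer label has positive $t$-length, a contradiction, giving \ref{it:main1}; a counting of shells gives \ref{it:main2}. For $n\ge 4$ one has $C(8)$, hence a linear bound on the number of regions in terms of $l(w)$; the word problem then reduces to solving finitely many equations over $\GG$, which is decidable by Diekert--Muscholl \cite{DM}. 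Note in particular that $n\ge 4$ is not used via Howie's fourth-power theorem but purely to get $C(8)$ and the region bound.
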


Given a cyclically reduced element $r$ of the free group $\FF(A)$ on $A$, a \emph{Magnus subgroup} is a subgroup generated by a subset
$B$ of $A$ such that $r\notin\la B\ra$.  
As given above, Magnus's Freiheitssatz for one-relator groups applies when $r$ involves every generator of the
free group in question so every proper subset of $A$ generates a Magnus subgroup; which embeds in the quotient by the normal closure of $r$.
In the free group setting the general case reduces to this special case: indeed, if $r$ is an arbitrary
element of $\FF(A)$ and $Y=\supp(r)\subseteq A$ with $X=A\bs Y$ non-empty
then we have $\FF(A)/N(r)\cong [\FF(Y)/M(r)]*\FF(X)$, where $N(r)$ and $M(r)$ are the  normal closures of $r$ in $\FF(A)$ and $\FF(Y)$,
respectively. From the Freiheitssatz as stated above, if $Y'$ is a proper subset of $Y$ then
the subgroup $\la Y'\ra$ of $\FF(Y)$ embeds in $\FF(Y)/M(r)$, so $\la X\cup Y'\ra$ embeds in $\FF(A)/N(r)$.
Thus the general Freiheitssatz for Magnus subgroups of  $\FF(A)$ reduces to the  special case of Magnus subgroups for $\FF(Y)$.  

To obtain an analogous reduction for one-relator quotients of partially commutative groups we  first note that
it follows from work of B.~Baumslag and S.J.~Pride \cite{BP}, together with the fact, proved by Diekert and Mushcoll \cite{DM}, that
equations are decidable over partially commutative groups; 
that if $\GG=\GG_1*\cdots *\GG_k$ is a free product (so $\G$ has  $k$ connected components)
and $\supp(r)$ contains vertices in at least two components of $\G$, then each of the $\GG_i$ embeds in $G$. 
In general, we consider the case where 
the
group $\GG$ decomposes as a free product with amalgamation, and one factor is generated by $\supp(r)$.
More precisely, we make the following definition.
\begin{defn}
  For a subset $B$ of $A$ define $\lk(B)=\cap_{b\in B}\lk(b)$ and for $g \in \GG$ define $\lk(g)=\lk(\supp(g))$. 
  A subset $Y$ of $A$ is called \emph{synchronised} if, 
  for all vertices $v$ of $Y$, the star  $\st(v)$ of $v$ is a subset
  of $Y\cup \lk(Y)$.
\end{defn}
If $Y$ is synchronised then, writing
$X=A\bs\{Y\cup \lk(Y)\}$, it follows also that, for all vertices $v$ of $X$, the star $\st(v)$ is a subset
of $X\cup \lk(Y)=A\bs Y$. Hence, if $Y$ is synchronised, setting $\AA_0=\la Y\cup \lk(Y)\ra$, $\AA_1=\la X\cup \lk(Y)\ra$  and
$U=\la \lk(Y)\ra$, it follows that $\GG=\AA_0*_{U}\AA_1$. 

\begin{theorem}\label{thm:amalgam} 
Let $\GG=\la A|R\ra$ and  
let $s\in \GG$
be a cyclically minimal element such that $\supp(s)$ is synchronised. 
Let 
$\AA_0=\la \supp(s)\cup \lk(s)\ra$, $\AA_1=\la A\bs \supp(s)\ra$   and
$U=\la \lk(s)\ra$; so $\GG=\AA_0*_{U}\AA_1$. 
Let $r=s^n$, for some $n\ge 1$, let $K=\la \supp(s)\ra$, 
denote by $N$ the normal closure of $r$ in $\GG$ and by $M$ the normal closure of $r$ in $K$, and  let $G=\GG/N$. 
Then the following hold.
\be[label=(\alph*)]
\item \label{it:amalgam1}
 If $Y'$ is a subset of $\supp(s)$ such that the subgroup $\la Y'\ra$ of $K$ embeds in $K/M$ then  
 $\la Y'\cup (A\bs\supp(s))\ra$ embeds in $G$; 
 \item \label{it:amalgam2}
   if $s$ has order $n$ in $K/M$ then $s$ has order $n$ in $G$,
 \item \label{it:amalgam3} if the word problem is solvable in $K/M$ then the word problem is solvable in $G$ and
 \item \label{it:amalgam4}  if the conjugacy problem is solvable in $K/M$ then the conjugacy problem is solvable in $G$.
\ee
\end{theorem}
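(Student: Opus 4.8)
The setup gives us a decomposition $\GG = \AA_0 *_U \AA_1$ where $\AA_0 = \la \supp(s) \cup \lk(s) \ra$, $\AA_1 = \la A \setminus \supp(s) \ra$, and $U = \la \lk(s) \ra$. The key structural observation is that $K = \la \supp(s) \ra$ is a retract of $\AA_0$: since $\supp(s)$ is synchronised, every vertex of $\lk(s)$ commutes with every vertex of $\supp(s)$, so in fact $\AA_0 = K \times U$ (a direct product), and $r = s^n \in K$.

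So here is the plan. First I would establish that $N$, the normal closure of $r$ in $\GG$, meets $\AA_0$ in exactly the normal closure of $r$ in $\AA_0$, and meets $\AA_1$ trivially. This is the standard "normal closure in an amalgamated product" computation: since $r$ lies in the factor $\AA_0$ (indeed in $K \le \AA_0$), the quotient $G = \GG/N$ is again an amalgamated free product, $G \cong (\AA_0/N_0) *_{\bar U} \AA_1$, where $N_0$ is the normal closure of $r$ in $\AA_0$ and $\bar U$ is the image of $U$. The point to check is that $U$ embeds in $\AA_0/N_0$: because $\AA_0 = K \times U$ and $r \in K$, we have $\AA_0/N_0 \cong (K/M) \times U$, so $U$ embeds, $\bar U \cong U$, and the amalgam is genuine. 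This already gives a clean handle on $G$: it is built from $K/M$, the abelian group $U$, and the partially commutative group $\AA_1$, glued along $U$.

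Next, for part \ref{it:amalgam1}: given $Y' \subseteq \supp(s)$ with $\la Y' \ra \hookrightarrow K/M$, I want $\la Y' \cup (A \setminus \supp(s)) \ra \hookrightarrow G$. Write $H = \la Y' \cup (A \setminus \supp(s)) \ra$. Since $Y' \subseteq \supp(s)$ and $\lk(Y') \supseteq \lk(s)$, the subgroup $H$ decomposes inside $\GG$ as $\la Y' \cup \lk(s) \ra *_{U} \AA_1$ — one uses here that $Y'$ inherits the synchronisation property, or more directly that $H \cap \AA_0 = \la Y' \cup \lk(s)\ra = \la Y'\ra \times U$ and $H \cap \AA_1 = \AA_1$, with $H$ generated by these. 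Under the quotient by $N$, this maps to $(\la Y'\ra M / M \times U) *_{\bar U} \AA_1$ sitting inside $G \cong ((K/M) \times U) *_{\bar U} \AA_1$. Since $\la Y'\ra$ embeds in $K/M$ by hypothesis, $\la Y'\ra \times U$ embeds in $(K/M) \times U$, and then the standard fact that a subamalgam $A_0' *_{U'} A_1'$ embeds in $A_0 *_U A_1$ whenever $A_i' \hookrightarrow A_i$ compatibly over $U' = U$ finishes it. Part \ref{it:amalgam2} is the special case $Y' = \supp(s)$ together with tracking the order: $s \in K$ has order $n$ in $K/M$, hence order $n$ in $(K/M)\times U$, hence order $n$ in $G$ since $(K/M)\times U$ embeds in $G$.

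For parts \ref{it:amalgam3} and \ref{it:amalgam4}: the decomposition $G \cong ((K/M) \times U) *_{\bar U} \AA_1$ reduces the word and conjugacy problems in $G$ to those in the factors, via the classical algorithms for amalgamated free products (Magnus–Karrass–Solitar for the word problem; for the conjugacy problem one needs a solution to the conjugacy problem in each factor together with a solution to the membership problem for the amalgamated subgroup $\bar U$ in each factor, and computability of the relevant coset representatives). The factor $\AA_1$ is a partially commutative group, where word and conjugacy problems are classically solvable and membership in the parabolic $U = \la \lk(s)\ra$ is decidable. The factor $(K/M) \times U$ has solvable word problem iff $K/M$ does (given as hypothesis), and its conjugacy problem reduces to that of $K/M$ plus the abelian factor; membership in $\bar U = 1 \times U$ reduces to the word problem in $K/M$. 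So everything is driven by the hypotheses on $K/M$, giving \ref{it:amalgam3} and \ref{it:amalgam4}.

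The main obstacle I anticipate is the bookkeeping for the conjugacy problem in \ref{it:amalgam4}: the MKS-type conjugacy algorithm for an amalgamated free product requires more than just conjugacy solvability in the factors — one needs to compute, for each factor, intersections of cyclic subgroups with conjugates of the amalgamated subgroup, and to handle the "$U$-conjugacy" equivalence on the amalgam. Verifying that all of these auxiliary decision problems are uniformly solvable, given only the stated hypothesis on $K/M$ and the known algorithmic properties of partially commutative groups, is where the real work lies; the embedding statements \ref{it:amalgam1} and \ref{it:amalgam2} are comparatively routine once the amalgam decomposition of $G$ is in hand.
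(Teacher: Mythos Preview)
Your proposal is correct and follows essentially the same route as the paper: the decomposition $G\cong (U\times K/M)\ast_U \AA_1$ is established exactly as you describe, and parts \ref{it:amalgam1}--\ref{it:amalgam3} are handled just as you outline. For part \ref{it:amalgam4} you correctly identify the obstacle; the paper resolves it by reducing the $U$-conjugacy question to a system of equations over the partially commutative group $\AA_1$ with the rational constraint $x\in U$ and invoking Diekert--Muscholl, and then gives an alternative argument avoiding equations by passing to block decompositions and using the (decidable) conjugacy problem in parabolic subgroups of $\AA_1$.
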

In terms of Magnus subgroups of $\GG$ (defined exactly as for free groups) with the notation, and under the hypotheses, of the theorem,
a Magnus subgroup
generated by a subset $B$ of $A$ embeds in $G$ if and only if the Magnus subgroup $\la B\cap \supp(s)\ra$ of $K$ embeds in $K/M$.

In cases where $\supp(s)$ is a 
 clique  
we obtain a  complete characterisation of the conditions 
under which all Magnus subgroups of $\GG$ embed in $G$. 
\begin{corollary}\label{cor:Abfrei} 
Let $\GG=\la A|R\ra$ and  
let $s\in \GG$
be a cyclically minimal element such that $\supp(s)$ is a clique. 
Let $r=s^n$, for some $n\ge 1$, 
denote by $N$ the normal closure of $r$ in $\GG$ and  let $G=\GG/N$. 
Then the following are equivalent. 
\be
\item 
$\GG_t=\la A\bs\{t\}\ra$ embeds in $G$, for all $t\in \supp(s)$. 
\item  $\supp(s)$ is synchronised. 
\ee
Moreover, if $\supp(s)$ is synchronised then $s$ has order $n$ in $G$, and the word and conjugacy problem
are decidable in $G$. 
\end{corollary}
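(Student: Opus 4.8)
The plan is to derive both implications from Theorem \ref{thm:amalgam}, exploiting the fact that when $\supp(s)$ is a clique the factor $K=\la\supp(s)\ra$ is free abelian and $M=\la s^n\ra$. Write $s=\prod_{x\in\supp(s)}x^{a_x}$ in $K$; the definition of support forces $a_x\ne 0$ for every $x\in\supp(s)$.

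\textbf{$(2)\Rightarrow(1)$ and the ``moreover'' clause.} Assume $\supp(s)$ is synchronised and fix $t\in\supp(s)$. Any element of $\la\supp(s)\bs\{t\}\ra\cap M$ has the form $s^{nk}$, whose $t$-exponent is $nka_t$, and this vanishes only when $k=0$; hence the intersection is trivial and $\la\supp(s)\bs\{t\}\ra$ embeds in $K/M$. Applying Theorem \ref{thm:amalgam}\ref{it:amalgam1} with $Y'=\supp(s)\bs\{t\}$, for which $Y'\cup(A\bs\supp(s))=A\bs\{t\}$, shows that $\GG_t$ embeds in $G$. The same exponent computation shows that $s$ has order exactly $n$ in $K/M$, while $K/M$ is a finitely generated abelian group and so has decidable word and conjugacy problems; so parts \ref{it:amalgam2}--\ref{it:amalgam4} of Theorem \ref{thm:amalgam} yield the last sentence of the corollary.

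\textbf{$(1)\Rightarrow(2)$.} I would argue by contraposition. If $\supp(s)$ is not synchronised, choose $v\in\supp(s)$ and $w\in\st(v)$ with $w\notin\supp(s)\cup\lk(\supp(s))$. Since $v\in\supp(s)$ and $w\notin\supp(s)$ we have $w\ne v$, hence $w\in\lk(v)$; and since $w\notin\lk(\supp(s))$ there is some $u\in\supp(s)$ with $[u,w]\ne 1$ in $\GG$, so in particular $u\ne v$. Put $p=\prod_{x\in\supp(s)\bs\{v\}}x^{-na_x}\in K$, so that $\supp(p)=\supp(s)\bs\{v\}\ni u$. In $G$ the relation $s^n=1$ gives $v^{na_v}=p$, and since $w$ commutes with $v$ it commutes with $v^{na_v}$, so $[w,p]=1$ in $G$. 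On the other hand $[w,p]\ne 1$ in $\GG$: using the standard description $C_\GG(w)=\la\st(w)\ra$ of the centraliser of a generator, together with the fact that an intersection of standard parabolic subgroups is generated by the intersection of their generating sets, the equality $[w,p]=1$ would force $p\in\la\supp(s)\cap\st(w)\ra$, which is impossible because $u\in\supp(p)$ while $u\notin\st(w)$. Since $w$ and $p$ lie in the parabolic (hence embedded) subgroup $\GG_v=\la A\bs\{v\}\ra$ of $\GG$, the commutator $[w,p]$ is nontrivial in $\GG_v$ yet maps to $1$ in $G$; so $\GG_v$ does not embed in $G$, and $(1)$ fails.

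\textbf{Main obstacle.} The abelian computations and the ``moreover'' clause are routine once Theorem \ref{thm:amalgam} is in hand. The real content is the argument for $(1)\Rightarrow(2)$: selecting the correct vertex $v$ to delete, using $s^n=1$ to produce the identity $v^{na_v}=p$ and hence $[w,p]=1$ in $G$, and certifying via centraliser and parabolic-intersection facts for partially commutative groups that $[w,p]$ nonetheless survives in the Magnus subgroup $\GG_v$. I expect the only delicacy there to be the bookkeeping that $\supp(p)=\supp(s)\bs\{v\}$ still contains the obstructing vertex $u$.
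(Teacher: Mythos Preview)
Your proof is correct and follows essentially the same route as the paper: the implication $(2)\Rightarrow(1)$ and the ``moreover'' clause are deduced from Theorem~\ref{thm:amalgam} using that $K$ is free abelian, and for $(1)\Rightarrow(2)$ you produce exactly the same obstruction commutator as the paper (with $v,w,u,p$ playing the roles of the paper's $t,x,a,a^nw$). Your justification that $[w,p]\neq 1$ in $\GG$ via the centraliser description $C_\GG(w)=\la\st(w)\ra$ is slightly more explicit than the paper's one-line assertion, but otherwise the arguments coincide.
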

\begin{proof}
Suppose $\supp(s)$ is a clique and not synchronised. 
 This means  there exists $t\in \supp(s)$ and 
$x\in A\bs(\supp(s)\cup \lk(s))$ such that $[x,t]=1$.  
Since $x\notin \lk(s)$, there is $a\in \supp(s)$ such that $[x,a]\neq 1$. As $K=\la\supp(s)\ra$ is
 Abelian we have $r=s^n=t^ma^nw$, for some $w$ such that $\supp(w)=\supp(s)\bs\{a,t\}$. Then 
\[x(a^nw)=_G xt^{-m}=t^{-m}x=_G (a^nw)x.\]
Hence $[x,a^nw]=_G1$ while, as $[x,a]\neq 1$,  $[x,a^nw]\neq 1$ in $\GG$. As $[x,a^nw]\in \la A\bs\{t\}\ra$, this implies that $\la A\bs\{t\}\ra$
does not embed in $G$.

The converse follows from Theorem \ref{thm:amalgam}, since when $K$ is Abelian $\la \supp(s)\bs\{t\}\ra$ embeds in $K/M$, for all
$t\in \supp(s)$, and $s$ has order $n$ in $K/M$; as in Example \ref{ex:abcase}.\ref{it:abcase1}.  As $K/M$ is a finitely
generated Abelian group, it has solvable conjugacy problem, so the remaining statement follows from
Theorem \ref{thm:amalgam} \ref{it:amalgam4}. 
\end{proof}
In the dual case, where  $\supp(s)$ is independent,  
we obtain a sufficient, but not necessary, condition for 
all Magnus subgroups of $\GG$ to embed in $G$. 
\begin{corollary}\label{cor:Freefrei}
Let $\GG=\la A|R\ra$ and  
let $s\in \GG$
be a cyclically minimal element such that $\supp(s)$ is independent. 
Let $r=s^n$, for some $n\ge 1$, 
denote by $N$ the normal closure of $r$ in $\GG$ and   let $G=\GG/N$. 
If $\supp(s)$ is synchronised  
then 
$\GG_t=\la A\bs\{t\}\ra$ embeds in $G$, for all $t\in \supp(s)$,  
$s$ has order $n$ in $G$ and the word problem is solvable in $G$. Moreover, if $n\ge 2$ then the conjugacy
problem is solvable in $G$. 
\end{corollary}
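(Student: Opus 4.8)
The plan is to derive the whole statement from Theorem~\ref{thm:amalgam}, exploiting the fact that when $\supp(s)$ is independent the amalgam factor $K=\la\supp(s)\ra$ is a free group, so that $K/M$ is an ordinary one-relator group and the classical theory --- Magnus's Freiheitssatz and its consequences recalled in the introduction --- applies to it. Here and below $M$ denotes the normal closure of $r$ in $K$, in keeping with the notation of Theorem~\ref{thm:amalgam}. (We assume $s\neq 1$, since otherwise $r=1$ and $G=\GG$.)

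First I would note that, $\supp(s)$ being independent, the full subgraph of $\G$ on $\supp(s)$ is a null graph, so $K$ is free on $\supp(s)$; and $s$ involves every generator of the free group $K$, by definition of $\supp$. Hence $K/M$ is a one-relator group whose defining relator uses all the generators, so by the Freiheitssatz every proper subset of $\supp(s)$ generates a subgroup of $K$ that embeds in $K/M$. Now fix $t\in\supp(s)$ and put $Y'=\supp(s)\bs\{t\}$, a proper subset of $\supp(s)$. Since $\supp(s)$ is synchronised the hypotheses of Theorem~\ref{thm:amalgam} are satisfied; and since $A\bs\{t\}=(A\bs\supp(s))\cup Y'$, part~\ref{it:amalgam1} of that theorem gives that $\GG_t=\la A\bs\{t\}\ra$ embeds in $G$.

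For the order of $s$, write $s=u^k$ in the free group $K$, with $u$ not a proper power and $k\geq 1$; then $s^n=u^{kn}$, and the standard consequence of the Freiheitssatz recalled in the introduction shows that $u$ has order $kn$ in $K/M$, whence $s=u^k$ has order $n$ in $K/M$. Part~\ref{it:amalgam2} of Theorem~\ref{thm:amalgam} then yields that $s$ has order $n$ in $G$. Next, one-relator groups have solvable word problem, so in particular $K/M$ does, and part~\ref{it:amalgam3} of Theorem~\ref{thm:amalgam} transfers this to $G$. Finally, if $n\geq 2$ then $s^n=u^{kn}$ with $kn\geq 2$, so $K/M$ is a one-relator group with torsion; such groups have solvable conjugacy problem, and part~\ref{it:amalgam4} of Theorem~\ref{thm:amalgam} transfers this to $G$.

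There is no real obstacle here beyond Theorem~\ref{thm:amalgam} itself: the argument is essentially bookkeeping --- checking that the hypotheses of that theorem are met (which, given that $\supp(s)$ is synchronised and $K$ is visibly free, they are) and identifying $\la Y'\cup(A\bs\supp(s))\ra$ with $\GG_t$. The only inputs that are not purely formal are the classical one-relator facts, which must be invoked in the correct form; note in particular that the conjugacy statement is restricted to $n\geq 2$ precisely because solvability of the conjugacy problem for torsion-free one-relator groups is not available in general.
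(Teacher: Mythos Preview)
Your proof is correct and follows essentially the same route as the paper: both derive everything from Theorem~\ref{thm:amalgam} by recognising that $K=\la\supp(s)\ra$ is free and invoking the classical one-relator theory (Freiheitssatz, order of the root, solvability of the word problem, and Newman's theorem for the conjugacy problem when $n\ge 2$). Your treatment of the order of $s$ is in fact slightly more careful than the paper's, since you explicitly reduce to the root $u$ rather than tacitly assuming $s$ is not a proper power.
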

\begin{proof}
This follows from Theorem \ref{thm:amalgam}, since when $K$ is free $\la \supp(s)\bs\{t\}\ra$ embeds in $K/M$, for all
$t\in \supp(s)$ and $s$ has order $n$ in $K/M$; as in Example \ref{ex:abcase}.\ref{it:abcase1}. As $K/M$ is a one-relator
group it has solvable word problem, and if $n\ge 2$ then solvable conjugacy problem; so the final statement follows from
Theorem \ref{thm:amalgam} \ref{it:amalgam3} and \ref{it:amalgam4}. 
\end{proof}
\begin{example}\label{ex:thm_main} 
\be[label=\arabic*.,ref=\arabic*]
\item  Corollaries \ref{cor:Abfrei} and \ref{cor:Freefrei} generalise Example \ref{ex:abcase}.\ref{it:abcase1}.
  In 
  Example \ref{ex:abcase}.\ref{it:abcase2new}, $\supp(s)=A$ so, although it is synchronised,  
  Theorem \ref{thm:amalgam} gives no new information. In this example $\lk(b)$ is not a clique but $\lk(c)=\lk(a)=\{b\}$, a clique.
  However (taking $c=t$ in Theorem \ref{thm:main}) we have $s=abc$ and $ab\notin \maln(\la b\ra)$, so $s$ is not $c$-thick, and 
  Theorem \ref{thm:main} does not apply.
  \item\label{it:p3}
Let $A=\{a,b,c,t\}$ and $R=\{[t,a],[a,b],[b,c]\}$ so $\GG=\la A\,|\,R\ra$ has commutation graph the path graph 
$P_4$ of length $3$ (and $4$ vertices). If $s=ct$ then $\lk(t)$ is a clique and $s$ satisfies the hypothesis of Theorem \ref{thm:main}. 
Setting $r=s^3$ it follows that $\la a,b,c\ra$ embeds in $\GG/N(r)$. Note that, since $G=\GG/N(s)$ is isomorphic to $\ZZ^3$, 
the subgroup $\la a,c\ra$ of $\GG$ does not embed in $G$; showing that  
the condition $n\ge 3$  in 
Theorem \ref{thm:main} cannot be entirely removed. 
\item \label{it:tree}
  Generalising the previous example, let $\G$ be a tree, let $t$ be a leaf of $\G$  and let $a$ be the vertex of $\G$ to
  which $t$ is adjacent; so $\lk(t)=\{a\}$. Let $w$ be any word in $\la A\bs \st(a)\ra$. Then $s=wt$ satisfies all the hypotheses
  of Theorem \ref{thm:main} so $\la A\bs\{t\}\ra$ embeds in $\GG/N(s^n)$, when $n\ge 3$. Moreover, if $w_i\in \la A\bs \st(a)\ra\cup \la a\ra$  and
  $\e_i\in \{\pm 1\}$ are chosen for $i=0,\ldots, m$, 
such that $w_i\notin \la a\ra$, for at least one $i$, and  $s=w_0t^{\e_1}w_1\cdots w_{m-1}t^{e_m}w_m$ is cyclically minimal and not a proper power in $\GG$, then again
Theorem \ref{thm:main} implies that  $\la A\bs\{t\}\ra$ embeds in $\GG/N(s^n)$, for $n\ge 3$.
\item \label{it:fourchord} 
Consider the graph $C_4'$ on the left of Figure \ref{fig:fourchord}. This is  a starred graph,  therefore  Theorem A of \cite{AK} applies. 
Every  subgraph of a connected starred graph contains a \emph{central} vertex; that is a vertex incident to all other vertices. 
In this example the set of central vertices is $B=\{a,c\}$. From \cite[Theorem A]{AK} it follows that if $r\notin \la a,b,c\ra$ then 
$\la a,b,c\ra$ embeds in $G=\GG/N(r)$. Moreover (\emph{loc. cit.}), if $r\notin \la a,c,d\ra$  then 
$\la a,c,d\ra$ embeds in $G$ and if $r\notin \la a,c\ra$  then 
$\la a,c\ra$ embeds in $G$. On the other hand, $\{a,c\}$ is a synchronised clique, so 
from our Corollary \ref{cor:Abfrei} it follows that if $r\in \la a,c\ra$ and $t\in \supp(r)$ then $\la A\bs \{t\}\ra$ embeds
in $G$. Again, $\{b,d\}$ is a synchronised and independent, so 
from  Corollary \ref{cor:Freefrei}, if $r\in \la b,d\ra$ and $t\in \supp(r)$ then $\la A\bs \{t\}\ra$ embeds
in $G$. The same applies when $\G=C_4$, as shown on the right hand side of Figure \ref{fig:fourchord}. In this case,
if $\supp(r)=\{b,d\}$ and $t=b$ or $d$,  then Corollary \ref{cor:Freefrei} implies that
$\la A\bs\{t\}\ra$ embeds in $\GG(C_4)/N(r)$.

As the set of central vertices of a starred graph always forms a synchronised clique, for starred graphs  
Corollary \ref{cor:Abfrei} always complements  \cite[Theorem A]{AK}, as in this example.
\begin{figure}
\begin{center}
\begin{tikzpicture}[scale=.75,arrowmark/.style 2 args={decoration={markings,mark=at position #1 with \arrow{#2}}}]%
  \tikzstyle{every node}=[circle, draw, fill=black, color=black,
  inner sep=0pt, minimum width=6pt]
\pgfmathsetmacro{\l}{1}
\draw (0,0) node[color=black] {};
\draw (3*\l,0) node {};
\draw (0,2*\l) node {};
\draw (3*\l,2*\l) node {};
%
\begin{pgfonlayer}{background}
  \draw (0,0)  -- (\l*3,0);
  \draw (0,0)  -- (0,\l*2);
  \draw (0,\l*2) --(\l*3,\l*2);
  \draw (3*\l,0) --(\l*3,\l*2);
  \draw (0,0) -- (\l*3,\l*2);
\end{pgfonlayer}
\draw (\l*3,2*\l) +(0.4,0) node[draw=none,fill=none,color=black] {$a$};
\draw (\l*3,0) +(0.4,0) node[draw=none,fill=none,color=black] {$b$};
\draw (0,0)  +(-0.4,0) node[draw=none,fill=none,color=black] {$c$};
\draw (0,2*\l) +(-0.4,0) node[draw=none,fill=none,color=black] {$d$};
\draw (\l*1.5,0) +(0,-1)  node[draw=none,fill=none,color=black] {$C_4'$}; 
\begin{scope}[shift={(7*\l,0)}]
\draw (0,0) node[color=black] {};
\draw (3*\l,0) node {};
\draw (0,2*\l) node {};
\draw (3*\l,2*\l) node {};
%
\begin{pgfonlayer}{background}
  \draw (0,0)  -- (\l*3,0);
  \draw (0,0)  -- (0,\l*2);
  \draw (0,\l*2) --(\l*3,\l*2);
  \draw (3*\l,0) --(\l*3,\l*2);
 
\end{pgfonlayer}
\draw (\l*3,2*\l) +(0.4,0) node[draw=none,fill=none,color=black] {$a$};
\draw (\l*3,0) +(0.4,0) node[draw=none,fill=none,color=black] {$b$};
\draw (0,0)  +(-0.4,0) node[draw=none,fill=none,color=black] {$c$};
\draw (0,2*\l) +(-0.4,0) node[draw=none,fill=none,color=black] {$d$};
\draw (\l*1.5,0) +(0,-1)  node[draw=none,fill=none,color=black] {$C_4$}; 
\end{scope}
\end{tikzpicture}
\end{center}
    \caption{Example \ref{ex:thm_main}.\ref{it:fourchord}}\label{fig:fourchord}
\end{figure}

\item \label{it:pentchord}
  The previous examples of a four cycle and a four cycle with a chord may be extended to $n$-cycles where $n\ge 5$.
Let $C_n'$ be the $n$-cycle with a chord, $n\ge 5$, as shown  on the left of Figure \ref{fig:pentchord}, 
let $\GG'=\GG(C_n')$ and let $s'=a_2a_{n-2}t\in \GG'$. Then $\GG'$, $t$ and $s'$ satisfy the conditions of
Theorem \ref{thm:main},  so 
setting $r'=s'^3$ the  subgroup $\la a_1, a_2,\ldots, a_{n-2},a_{n-1}\ra$ of $\GG'$ embeds in  
$G'=\GG'/N(r')$. 

Next let $C_n$ be $n$-cycle on the right of Figure \ref{fig:pentchord}, 
let $\GG=\GG(C_n)$, let $s=a_2a_{n-2}t\in \GG$ and $r=s^3$. In this case $\lk(t)$ is not a clique so Theorem \ref{thm:main} does 
not apply. There is a natural surjection $\pi'$ from $\GG$ to $\GG'$ and composing this with the canonical map $\rho'$ from $\GG'$ to $G'$ 
gives a surjection $\rho'\pi'$ of $\GG$ onto $G'$. Since $\pi'$ maps the 
subgroup $H=\la a_2,\ldots ,a_{n-2} \ra$ of $\GG$ isomorphically to the 
subgroup $H'=\la a_2,\ldots ,a_{n-2} \ra$ of $\GG'$ and, from the above, $\rho'$ embeds $H'$  into $G'$, it follows that $\rho'\pi'$ restricts 
to an embedding 
of $H$ into $G'$. On the other hand, denoting the canonical map from $\GG$ to $G=\GG/N(r)$ by $\rho$, and the canonical map 
of $G$ to $G'$ by $\pi$ we have $\pi\rho=\rho'\pi'$. Therefore $\rho$ restricts to an  embedding of $H$ into $G$. 
This gives a restricted Freiheitssatz in the case where $\lk(t)$ is not a clique.

\ee
\end{example}
\begin{figure}
\begin{center}
\begin{tikzpicture}[scale=.75,arrowmark/.style 2 args={decoration={markings,mark=at position #1 with \arrow{#2}}}]%
  \tikzstyle{every node}=[circle, draw, fill=black, color=black,
  inner sep=0pt, minimum width=6pt]
  \pgfmathsetmacro{\l}{3}
\foreach \a in {0,...,4}
    {
        \node at ({-30+\a*60}:\l){};
      }
\begin{pgfonlayer}{background}
  \draw [black,thick,domain=-40:220] plot ({\l*cos(\x)}, {\l*sin(\x)});
  \draw [black,thick,dashed,domain=220:320] plot ({\l*cos(\x)}, {\l*sin(\x)});
  \draw [black,thick] (30:\l) -- (150:\l);
\end{pgfonlayer}
\node[draw=none,fill=none,color=black] at (90:\l+0.5){$t$};
\node[draw=none,fill=none,color=black] at (270:\l+1){$C'_n$};
\foreach \a in {0,...,1}
  {
    \pgfmathtruncatemacro{\s}{2-\a}
    \node[draw=none,fill=none,color=black] at ({-30+\a*60}:\l+0.5){$a_{\s}$};
  }
\foreach \a in {3,...,4}
  {
    \pgfmathtruncatemacro{\s}{\a-2}
    \node[draw=none,fill=none,color=black] at ({-30+\a*60}:\l+0.7){$a_{n-\s}$};
  }  
  \begin{scope}[shift={(3.5*\l,0)}]
 \foreach \a in {0,...,4}
    {
        \node at ({-30+\a*60}:\l){};
      }
\begin{pgfonlayer}{background}
  \draw [black,thick,domain=-40:220] plot ({\l*cos(\x)}, {\l*sin(\x)});
  \draw [black,thick,dashed,domain=220:320] plot ({\l*cos(\x)}, {\l*sin(\x)});
\end{pgfonlayer}
\node[draw=none,fill=none,color=black] at (90:\l+0.5){$t$};
\node[draw=none,fill=none,color=black] at (270:\l+1){$C_n$};
\foreach \a in {0,...,1}
  {
    \pgfmathtruncatemacro{\s}{2-\a}
    \node[draw=none,fill=none,color=black] at ({-30+\a*60}:\l+0.5){$a_{\s}$};
  }
\foreach \a in {3,...,4}
  {
    \pgfmathtruncatemacro{\s}{\a-2}
    \node[draw=none,fill=none,color=black] at ({-30+\a*60}:\l+0.7){$a_{n-\s}$};
  }  
\end{scope}

\end{tikzpicture}
\end{center}
    \caption{Example \ref{ex:thm_main}.\ref{it:pentchord}}\label{fig:pentchord}
  \end{figure}

To put  the conditions and conclusions of Theorem \ref{thm:main} in context we consider circumstances under which the Theorem 
fails. As in Example \ref{ex:thm_main}.\ref{it:pentchord},  we may salvage some form of Freiheitssatz in the case where 
$\lk(t)$ is not a clique; so we assume that, for some vertex $t$ of $\supp(s)$, the link of $t$ is a clique. To 
keep matters simple we  also assume that $s=wt$, where $\supp(w)=A\bs\{t\}$. In this case the conditions of 
Theorem \ref{thm:main} fail if either 
\be[label=(\roman*)]
\item 
$\supp(s)\subset \st(t)$, if and only if $\supp(w)\subset \lk(t)$; or 
\item 
$s$ is not a $t$-root; or 
\item $s$ is not $t$-thick.  
\ee
Since we have assumed that $\supp(s)=A$ the first of these possibilities, together with the assumption that 
$\lk(t)$ is a clique, implies that $\GG$ is a free Abelian group. In this case we defer to standard results for finitely generated
free Abelian groups; and no longer need our Theorems. Moreover, the form $s=wt$ that we have chosen implies that 
$s$ is not a $t$-root. (Indeed, from the definitions in Section \ref{sec:hnndiag}, this holds for every $s$ with prime $t$-length.) This leaves the case where $s$ is not $t$-thick. When $s=wt$ is not $t$-thick it  follows from Lemma \ref{lem:thick} that 
there is a element $b$ which belongs to both $Y=\{y\in A\,:\, \st(y)=A\}$ (the  set of central vertices of $\G$)  
and  to $\supp(w)\bs\lk(t)$. The centre $Z$ of $\GG$ is equal to $\la Y\ra$ 
and so, setting $A_0=A\bs Y$ and $\GG_0=\la A_0\ra$, we have $\GG=\GG_0\times Z$ and $w=w_0w_z$, where $w_0\in \GG_0$, $w_z\in Z$. 
As $s\notin \st(t)$, we have $t\in A_0$ and, setting $U_0=\la \lk(t)\bs Y\ra$, Lemma \ref{lem:thick} implies that $w_0$ is 
in $\maln_{\GG_0}(U_0)$ so $s_0=w_0t$ is $t$-thick. Moreover, 
from the definition in Section \ref{sec:redn}, it follows that $s_0$ is not a $t$-root and certainly $s_0$ is not in the star 
of $t$ (in the full subgraph of $\G$ generated by $A_0$). The hypotheses of Theorem \ref{thm:main} therefore hold for 
the element $s_0$ of $\GG_0$ and the letter $t$ of $\supp(s_0)$. Thus $\la A_0\bs \{t\}\ra$ embeds in $G_0=\GG_0/N$, where 
$N$ is the normal closure of $s_0^n$ in $\GG_0$, for some $n\ge 3$. In $\GG$ we have $r=s^n=w_z^n(w_0t)^n=w_z^ns_0^n$ and so
 $G=\GG/N(r)=G_0\times Z/W$, where $W=\la w_z\ra$. It follows that the subgroup $\la A_0\bs \{t\}\ra$ of $\GG$ embeds in 
$G$. Also, using Theorem \ref{thm:main} \ref{it:main2}, $sN(r)$ has order $n$ in $G$. 

From the results of \cite{AK}, the examples above and this analysis we conclude that a version of the Freiheitssatz holds 
for many one-relator quotients of partially commutative groups. A precise classification requires a more delicate analysis
of the ways in which $s$ may  fail to be $t$-thick or a $t$-root (or a proof which avoids some of
these conditions). To add some credence to our claim however, we show in Section
\ref{sec:almostall} that 
 Theorem \ref{thm:main} applies for almost all cyclically reduced words 
 in $\GG(C_n')$ (as defined in Example  \ref{ex:thm_main}.\ref{it:pentchord}).

 In Section \ref{sec:prelim} we review the parts of the 
theory of partially commutative groups necessary for the paper.  In Section \ref{sec:hnndiag}, we describe   
small cancellation theory over the HNN-presentation of $\GG$, in situations where the there is an element $t$ in $\supp(s)$ such that
$\lk(t)$ is a clique. 
 In  Section \ref{sec:main} we prove Theorem \ref{thm:main} and Theorem \ref{thm:amalgam}. Finally, in Section
\ref{sec:almostall} we show that Theorem \ref{thm:main} is almost always applicable in the situation of
Example \ref{ex:thm_main}.\ref{it:pentchord}. 

\section{Preliminaries}\label{sec:prelim}
First we recall some of the notation and definitions of \cite{B} and  \cite{EKR}.
As above, if $w\in (A\cup A^{-1})^\ast$ then $\supp(w)$ is the set of
elements $x\in A$ such that $x$ or $x^{-1}$ occurs in $w$. (In \cite{EKR}
$\a(x)$ is used instead of $\supp(x)$.) 
 For $u,v \in (A\cup A^{-1})^*$ we
use $u=v$ to mean $u$ and $v$ are equal as elements of $\GG$. Equality
of words $u,v$ in $(A\cup A^{-1})^*$ is denoted by $u\equiv v$.
If $g\in \GG$ and $w\in (A\cup A^{-1})^*$ is a
word of minimal length representing $g$ then we say that $w$ is a {\em
  minimal} form of $g$ (or just that $w$ is {\em minimal}).

The Cancellation Lemma, \cite[Lemma 4]{B}, asserts that if $w$ is a non-minimal word in $(A\cup A^{-1})^*$ then
$w$ has a subword $xux^{-1}$, where $x\in A\cup A^{-1}$, $u\in (A\cup A^{-1})^*$ and $x$ commutes with every
letter occurring in $u$. 
The Transformation Lemma, \cite[Lemma 5.5.1]{B} (see also \cite[Lemma 2.3]{EKR}) asserts that, if $u$
and $v$ are minimal and $u=v$ then we can transform the word $u$ into
the word $v$ using only commutation relations from $R$ (that is,
without insertion or deletion of any subwords of the form
$x^{\e}x^{-\e}, x\in A$).
From the
Transformation Lemma it follows that if $u$
and $v$ are minimal and $u=v$ then $\supp(u)=\supp(v)$ and $|u|=|v|$
(where $|w|$ denotes the length of the word $w$). Therefore, for
any element $g\in \GG$ we may define $\supp(g)=\supp(w)$, and the {\em length} $l(g)$ of $g$ as
$l(g)=|w|$, where $w$ is a
minimal form of $g$. For a subset $S$ of $\GG$ we define  $\supp(S)=\cup_{s\in S}\supp(s)$. 

If $g,h\in \GG$ such that $l(gh)=l(g)+l(h)$ then we write $gh=g\cdot h$. 
(Written $g\circ h$ in \cite{EKR}.)
It follows that $gh=g\cdot h$ if and only if, for all minimal forms
$u$ and $v$ of $g$ and $h$, respectively, $uv$ is a minimal form for
$gh$. Clearly if $w\in (A\cup A^{-1})^*$ is minimal and $w\equiv uv$
then, in $\GG$, $w=u\cdot v$. If $k=g\cdot h$ then we say that $g$ is a
{\em left divisor} of $k$ (and $h$ is a {\em right divisor} of $k$).

We say that $h\in \GG$ is {\em cyclically minimal} 
if $l(h)\le l(h^g)$, for all $g\in \GG$.
If $w\in (A\cup A^{-1})^*$ and $w\equiv uv$ then we
call  $vu\in (A\cup A^{-1})^*$ a {\em cyclic permutation} of $w$. 
\begin{lemma}[{\cite[Lemma 2.2]{DKR2}}]\label{lem:cycgeod}
Let $w\in (A\cup A^{-1})^*$ be a minimal form for an element $h$ of $\G$. Then the following are equivalent.
\be[(i)]
\item\label{cycgeod1} $h$ is cyclically minimal.
\item\label{cycgeod2} If $y\in A\cup A^{-1}$ is a left divisor of $h$ then $y^{-1}$ is not a right
  divisor of $h$.
\item\label{cycgeod3} All cyclic permutations of $w$ are minimal forms.
\ee
Moreover, if $w$ is cyclically minimal then $\supp(w^g)\supseteq \supp(w)$,
for all $g\in \GG$.
\end{lemma}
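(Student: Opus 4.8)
The plan is to prove the three conditions equivalent by establishing (i)$\Rightarrow$(ii), (ii)$\Leftrightarrow$(iii) and (ii)$\Rightarrow$(i), and then to deduce the final support statement from (i). The basic tool, besides the Cancellation and Transformation Lemmas, is the elementary fact that for any $x\in\GG$ with minimal form $u$ and any $a\in A\cup A^{-1}$, one has $l(x^a)<l(x)$ (where $x^a=a^{-1}xa$) if and only if $a$ is a left divisor of $x$ and $a^{-1}$ is a right divisor of $x$, in which case $l(x^a)=l(x)-2$ and some minimal form of $x$ has the shape $a\cdot v\cdot a^{-1}$. This is proved by applying the Cancellation Lemma to the length-$(l(x)+2)$ word $a^{-1}ua$: no cancellation can occur inside the minimal word $u$, so every cancellation uses one of the two new end letters, which forces $a$ (resp. $a^{-1}$) to the front (resp. back) of a suitable transform of $u$.

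Then (i)$\Rightarrow$(ii) is immediate: a left divisor $y$ of $h$ whose inverse were a right divisor would give $l(h^y)=l(h)-2<l(h)$. For (iii)$\Rightarrow$(ii), argue contrapositively: from $\neg$(ii) the fact above, together with the Transformation Lemma, places the offending letters as $a_i=y$ (commuting with $a_1\cdots a_{i-1}$) and $a_j=y^{-1}$ (commuting with $a_{j+1}\cdots a_n$) in the given word $w\equiv a_1\cdots a_n$; one then checks that the cyclic permutation of $w$ having $a_i$ (if $i<j$) or $a_j$ (if $j<i$) as its last letter contains a cancelling subword, hence is not minimal. For (ii)$\Rightarrow$(iii): if some cyclic permutation $w'$ of $w$ is non-minimal, then by the Cancellation Lemma it has a cancelling subword; since $w$ and each of its subwords is minimal, this subword must wrap around the cut point, and moving the two offending letters of $w$ to the ends of a transform of $w$ yields a minimal form $z^{-1}\cdot(\ )\cdot z$ of $h$, so that $z^{-1}$ is a left divisor of $h$ with $z=(z^{-1})^{-1}$ a right divisor — contradicting (ii).

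The main work is (ii)$\Rightarrow$(i), which I would prove by induction on $l(g)$, simultaneously for all $h$ satisfying (ii): that $l(h^g)\ge l(h)$. If the word $g^{-1}wg$ ($w$ a minimal form of $h$) is already minimal then $l(h^g)=l(h)+2l(g)\ge l(h)$; otherwise the Cancellation Lemma gives a cancelling subword, and since each of the three blocks $g^{-1},w,g$ is minimal this subword straddles at least two of them. Inspecting the few possibilities, in every case one can extract a letter $c$ which is a left divisor of $g$ and for which $h^c$ again satisfies (ii) with $l(h^c)=l(h)$: either $c$ commutes with $h$ (so $h^c=h$), or $c$ is a left divisor of $h$ with $c^{-1}$ not a right divisor, or $c$ is not a left divisor of $h$ but $c^{-1}$ is a right divisor. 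Writing $g=c\cdot g'$ with $l(g')=l(g)-1$, one has $h^g=(h^c)^{g'}$, and the inductive hypothesis applied to $(h^c,g')$ gives $l(h^g)\ge l(h^c)=l(h)$. The two auxiliary claims used here — that (ii) survives conjugation by such a $c$ — are short case analyses on the left and right divisors of $h^c$ (which equals $h_0c$ or $c^{-1}h_0$ in a minimal form), reducing any violation of (ii) for $h^c$ to one for $h$. I expect this to be the main obstacle: the naive induction, peeling off the first letter of $g$ regardless, breaks down because that letter may conjugate $h$ to an element of strictly larger length that no longer satisfies (ii); the cure is to use the cancellation structure to locate a common left divisor of $g$ and $h$, so that conjugation by it is both length- and (ii)-preserving.

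Finally, for the support assertion, let $h$ be cyclically minimal and suppose $x\in\supp(h)\setminus\supp(h^g)$ for some $g$. Sending $x\mapsto 1$ and fixing the other generators defines a retraction $\rho_x\colon\GG(\G)\to\GG(\G\setminus\{x\})$. Since $x\notin\supp(h^g)$, the element $h^g$ lies in $\la A\setminus\{x\}\ra$ and so is fixed by $\rho_x$; hence $h^g=\rho_x(h^g)=\rho_x(h)^{\rho_x(g)}$ is conjugate in $\GG$ to $\rho_x(h)$, while $l(\rho_x(h))<l(h)$ because deleting the occurrences of $x$ from a minimal form of $h$ strictly shortens it. But $h^g$ is conjugate to $h$, so by cyclic minimality the minimal length attained in their common conjugacy class is $l(h)$ — contradicting the existence of the shorter representative $\rho_x(h)$. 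Therefore $\supp(h^g)\supseteq\supp(h)$.
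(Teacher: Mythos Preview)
The paper does not give its own proof of this lemma: it is simply quoted from \cite[Lemma~2.2]{DKR2}, so there is nothing in the present paper to compare your argument against. That said, your proposal is a correct and self-contained proof.

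A couple of small remarks. In your argument for (iii)$\Rightarrow$(ii), the case $j<i$ is in fact vacuous: if $a_j=y^{-1}$ commutes with $a_{j+1},\ldots,a_n$ and $a_i=y$ with $i>j$, then $a_j\cdots a_i$ is already a cancelling subword of $w$ itself, contradicting minimality of $w$. So only $i<j$ can occur, and your treatment of that case is fine. In your induction for (ii)$\Rightarrow$(i), the verification that $h^c$ again satisfies (ii) is most cleanly done by invoking the already-established equivalence (ii)$\Leftrightarrow$(iii): when $c$ is a left divisor of $h$ (or $c^{-1}$ a right divisor), $h^c$ has a minimal form which is a cyclic permutation of a minimal form of $h$, and its cyclic permutations coincide with those of $h$, so (iii) transfers directly. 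Your retraction argument for the support inclusion is clean and standard. In short, the write-up is sound; since the paper defers to \cite{DKR2}, your proof actually supplies more detail than the paper does.
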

If $w$ is a minimal form of a cyclically minimal element $g\in \GG$ 
then we say that $w$ is a cyclically minimal form.
From, for example, Proposition 3.9 of \cite{DK}, if $g\in \GG$ then there exist $u,
w\in (A\cup A^{-1})^*$, with $w$ a cyclically minimal form, such that
$g=u^{-1}\cdot w \cdot u$. Thus $g$ is cyclically minimal if and only if
$u=1$. Observe that if $g$ is cyclically minimal then
$l(g^n)=nl(g)$. Therefore partially commutative groups are torsion free.

\begin{lemma}\label{lem:doublecoset}
Let $U$ be a subgroup of $\GG$ and let $D$ be a set of double coset representatives of $U$ in $\GG$. The following are 
equivalent.
\be[(i)]
\item\label{it:dc3} If $d\in D$ and $g\in UdU$ then $l(g)\ge l(d)$.
\item\label{it:dc4} If $d\in D$ then $d$ has no left or right divisor in $U$.
\ee 
There exists a unique set $D$ of double coset representatives of $U$ in $\GG$ such that  \ref{it:dc3} holds and 
this
set satisfies 
\be[label=(\roman*)]
\addtocounter{enumi}{2}
\item\label{it:dc1} $1\in D$ and
\item\label{it:dc2} if $d\in D$ then $d^{-1} \in D$. 
\ee 
In particular, if $UdU=Ud^{-1}U$, for some $d\in D$, then $d=1$. 
\end{lemma}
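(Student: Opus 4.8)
The plan is to establish the equivalence of \ref{it:dc3} and \ref{it:dc4} first, then read off the existence and uniqueness of $D$, and finally deduce \ref{it:dc1}, \ref{it:dc2} and the concluding sentence. Throughout I would use that all minimal forms of an element have the same support and length, that $\supp(h\cdot k)=\supp(h)\cup\supp(k)$ (so the support of any left or right divisor of $g$ is contained in $\supp(g)$), and that the subgroup $U$ arising in the applications is a canonical parabolic subgroup $\langle B\rangle$, $B\subseteq A$, hence closed under passing to left and right divisors; this last property is the only structural input needed from $U$.

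The implication \ref{it:dc3}$\Rightarrow$\ref{it:dc4} is immediate: if a nontrivial $u\in U$ is a left divisor of $d$, write $d=u\cdot d'$, so $d'=u^{-1}d\in UdU$ with $l(d')=l(d)-l(u)<l(d)$, contradicting \ref{it:dc3}; the right-divisor case is symmetric. For \ref{it:dc4}$\Rightarrow$\ref{it:dc3}, suppose \ref{it:dc4} holds for $d$ but some $g\in UdU$ has $l(g)<l(d)$. Among all pairs consisting of such a $g$ together with an expression $g=u_1du_2$ ($u_1,u_2\in U$), I would pick one minimising $l(u_1)+l(u_2)$; since $g\ne d$ this minimum is at least $1$. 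Taking minimal forms $p,q,r$ of $u_1,d,u_2$, the word $pqr$ represents $g$ and has length $l(u_1)+l(d)+l(u_2)\ge l(d)>l(g)$, so it is not minimal; by the Cancellation Lemma it contains a subword $zvz^{-1}$, with $z\in A\cup A^{-1}$ commuting with every letter of $v$, and since $p,q,r$ are each minimal the occurrences $z$ and $z^{-1}$ lie in two distinct blocks among $p,q,r$.

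The core of the argument is the resulting three-case analysis. If $z$ lies in $p$ and $z^{-1}$ in $q$, then (commuting $z$ rightwards through the tail of $p$ and $z^{-1}$ leftwards through the head of $q$, via the Transformation Lemma) $z$ is a right divisor of $u_1$ and $z^{-1}$ a left divisor of $d$; as the generator underlying $z$ lies in $\supp(u_1)\subseteq B$, the nontrivial element $z^{-1}$ lies in $U$, contradicting \ref{it:dc4}. The symmetric case, $z$ in $q$ and $z^{-1}$ in $r$, yields a nontrivial right divisor of $d$ in $U$. If instead $z$ lies in $p$ and $z^{-1}$ in $r$, then $z$ commutes with all of $d$; writing $u_1=u_1'\cdot z$ and $u_2=z^{-1}\cdot u_2'$ (both in $U$, since $z\in U$), we get $g=u_1'(zdz^{-1})u_2'=u_1'du_2'$ with $l(u_1')+l(u_2')=l(u_1)+l(u_2)-2$, contradicting minimality. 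So no such $g$ exists. I expect the delicate point to be exactly here: checking that the Cancellation Lemma together with commutation moves genuinely forces one of these three configurations, and that in the two boundary configurations one extracts an honest \emph{nontrivial} divisor of $d$ that lies in $U$.

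Granting the equivalence, let $D$ be the set of all elements of $\GG$ with no nontrivial left or right divisor in $U$; equivalently, by \ref{it:dc3}$\Leftrightarrow$\ref{it:dc4}, the elements of minimal length in their double coset. Every double coset contains a minimal-length element, so $D$ meets every double coset; for uniqueness within a coset, if $d,d'\in D$ with $d'=u_1du_2\in UdU$ then $l(d)=l(d')$ by the equivalence, and choosing the expression with $l(u_1)+l(u_2)$ minimal and rerunning the three-case analysis (with $d'$ in the role of $g$) forces $l(u_1)+l(u_2)=0$, i.e.\ $d'=d$. Hence $D$ is a set of double coset representatives satisfying \ref{it:dc3}, and any such set must agree with $D$ coset by coset, so $D$ is unique. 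Property \ref{it:dc1} holds because $1$ has no nontrivial divisor of any kind; \ref{it:dc2} because $h$ is a left (respectively right) divisor of $d^{-1}$ exactly when $h^{-1}$ is a right (respectively left) divisor of $d$, and $h\in U\iff h^{-1}\in U$. Finally, if $UdU=Ud^{-1}U$ for some $d\in D$, then $d^{-1}\in D$ by \ref{it:dc2} and $d^{-1}\in UdU$, so uniqueness gives $d^{-1}=d$, whence $d^2=1$; since partially commutative groups are torsion free, $d=1$.
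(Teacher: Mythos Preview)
Your proof is correct and follows the same underlying idea as the paper: both hinge on the fact that if $d$ has no nontrivial left or right divisor in $U$, then any expression $g=u_1du_2$ with $u_i\in U$ can be rewritten so that $g=u_1\cdot d\cdot u_2$. The paper simply asserts this (``as $d$ has no left or right divisor in $U$ we may choose such $u$ and $v$ so that $g=u\cdot d\cdot v$'') and reads off uniqueness; you supply the missing argument via the explicit Cancellation-Lemma three-case analysis, which is exactly what is needed to justify that line.

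You also correctly observe that this step requires $U$ to be closed under taking left and right divisors --- true for the canonical parabolic subgroups used throughout the paper, but not stated in the lemma. Without it the lemma fails: in $\GG=\la a,b\mid [a,b]\ra\cong\ZZ^2$ with $U=\la a^2b\ra$, the element $d=ab$ has no nontrivial divisor in $U$, yet $(a^2b)^{-1}d=a^{-1}\in UdU$ has smaller length, so \ref{it:dc4} does not imply \ref{it:dc3}; and in $\GG=\la a\ra\cong\ZZ$ with $U=\la a^2\ra$, both $a$ and $a^{-1}$ lie in $UaU$, have length $1$, and have no nontrivial divisor in $U$, so uniqueness fails. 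So your added hypothesis is not just convenient but necessary, and your more detailed argument is what actually makes the proof go through.
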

\begin{proof}
For each $g\in \GG$ we may choose a minimal form $d\in \GG$ such that $l(d)\le l(w)$, 
for all $w\in UgU$. Thus there exists a set $D$ of double coset representatives
having property \ref{it:dc3}. Suppose $D$ is such a set and $d\in D$. Then
$d$ satisfies \ref{it:dc4}, since it has minimal length amongst elements of $UdU$. 
 If $d\in D$, 
$g\in UdU$ and $l(g)=l(d)$ 
then $g=udv$, for some $u,v\in U$. In fact, as $d$ has no left or right divisor 
in $U$ we may choose such
$u$ and $v$ so that $g=u\cdot d \cdot v$.  Unless $u=v=1$ this implies that $l(g)>l(d)$, and  
if $u=v=1$ then we have $g=d$. Therefore $d$ is the unique element of minimal length in $UdU$; 
and $D$ is uniquely determined by condition \ref{it:dc3}. 
 On the other hand, if $D'$ is a set of double coset representatives satisfying \ref{it:dc4} a similar argument
shows that $D'$ satisfies \ref{it:dc3}.

Assume now that $D$ satisfies \ref{it:dc3}. Then \ref{it:dc1}, a special case of \ref{it:dc3}, also holds. If $d\in D$ and 
 $Ud^{-1}U$ contains an element $g$ such that $l(g)\le l(d^{-1})$ then $g^{-1}\in UdU$ and $l(g^{-1})\le l(d)$; so
from the above $g=d^{-1}$. It follows that if $d\in D$ then so is $d^{-1}$, and so \ref{it:dc2} holds. 
Moreover, since $d\neq d^{-1}$, we have $UdU\cap Ud^{-1}U=\emptyset$, for all $d\in D\bs \{1\}$.
\end{proof}

The elements of $A$ are termed the {\em canonical} generators of $\GG(\G)$ and 
a subgroup of $\GG$ generated by a subset $Y$ of $\GG$ is called a {\em
canonical parabolic} subgroup. 
Let $Y\subseteq A$ and denote by $\G_Y$ the full subgraph of $\G$ with vertex set $Y$. Let
$\GG(\G_Y)$ be the
partially commutative group with commutation graph $\G_Y$. 
It follows from the Transformation
Lemma, \cite[Lemma 5.5.1]{B}, that $\GG(\G_Y)=\la Y\ra$, the canonical
parabolic subgroup of $\GG$ generated by $Y$.

For cyclically minimal elements $g,h\in \GG$ we define $g\sim_0 h$ if $g=u\cdot v$ and 
$h=v\cdot u$, for some $u,v\in \GG$. Then let $\sim$ be the transitive closure of
 $\sim_0$ and denote by  $[g]$ the equivalence class of $g$ under  the equivalence relation $\sim$ on
cyclically minimal elements.
The following appears in \cite[Corollary 2.4]{DKR2} (where the set $[g]$ is incorrectly defined).
\begin{corollary}\label{cor:conj}
Let $w,g$ be (minimal forms of) elements of $\GG$ and 
let $w=u^{-1}\cdot v\cdot u$, where $v$ is cyclically minimal. Then
there exist minimal forms $a$, $b$, $c$, $d_1$, $d_2$ and 
$e$ such that $g=a\cdot b\cdot c\cdot d_2$, $u=d_1\cdot a^{-1}$, $d=d_1\cdot d_2$,
$w^g=d^{-1}\cdot e\cdot d$, $[e]=[v]$, $e=v^b$, $\supp(b)\subseteq \supp(v)$ and 
$[\supp(b\cdot c),\supp(d_1)]=[\supp(c),\supp(v)]=1$.
\end{corollary}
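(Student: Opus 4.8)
The plan is to reduce the assertion to a cancellation analysis of the single conjugator $ug$. Since $w^{g}=g^{-1}(u^{-1}\cdot v\cdot u)g=(ug)^{-1}v(ug)$, set $h=ug$; then everything comes down to describing $h^{-1}vh$ for the cyclically minimal element $v$. The first move is to factor the product $u\cdot g$: by the Cancellation Lemma (Section~\ref{sec:prelim}) there are minimal forms $d_1$, $a$ and $g'$ with $u=d_1\cdot a^{-1}$, $g=a\cdot g'$ and $ug=d_1\cdot g'$, where $a^{-1}$ is the full cancelling core, so that no further cancellation occurs between $d_1$ and $g'$. This fixes $a$ and $d_1$; what remains is to split $g'=b\cdot c\cdot d_2$, after which we take $e=v^{b}$ and $d=d_1\cdot d_2$.

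The main engine is a conjugate normal form for a cyclically minimal element, proved by induction on length: for cyclically minimal $v$ and any $k\in\GG$ one factors $k=k_1\cdot k_2\cdot k_3$ with $\supp(k_1)\subseteq\supp(v)$, $[\supp(k_2),\supp(v)]=1$, $[v^{k_1}]=[v]$ (so $v^{k_1}$ is cyclically minimal, by Lemma~\ref{lem:cycgeod}), and $k^{-1}vk=k_3^{-1}\cdot v^{k_1}\cdot k_3$. In the inductive step one appends a single generator $x\in A\cup A^{-1}$ to the conjugator and, using the Cancellation and Transformation Lemmas together with Lemma~\ref{lem:cycgeod}, sorts $x$ into one of four cases: $x$ cancels into the tail of $k_3$; $x$ commutes with $\supp(v)$ and with everything already absorbed, so it is added to $k_2$; $x$, up to commutation, effects a cyclic permutation of the current $v^{k_1}$, so it is added to $k_1$; or none of these, so $x$ is appended to $k_3$. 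One then updates the supports and the factorisation (legitimately, by the Transformation Lemma, since all the letters being reordered commute) accordingly.

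Applying this to $h=d_1\cdot g'$ is the delicate part, because the block $d_1$ (inherited from $u$) sits between $v$ and the rest of the conjugator. Since $w=u^{-1}\cdot v\cdot u$ is already in cyclically minimal form and $d_1\cdot g'$ is non-cancelling, no letter of $d_1$ is ever absorbed, so only letters of $g'$ can be pulled in, and to reach $v$ such a letter must first commute past all of $d_1$. The letters surviving this comprise $b$ (those with support in $\supp(v)$) and $c$ (those commuting with $v$), and this is exactly where the condition $[\supp(b\cdot c),\supp(d_1)]=1$ is forced; the rest of $g'$ is $d_2$. Carrying this bookkeeping through the induction — tracking simultaneously the cancellation in $d_1\cdot g'$, the cyclic permutations of $v$, and which letters of $g'$ have been commuted past $d_1$ — is the principal obstacle, and is what makes the statement harder than the case $u=1$. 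Once $b$, $c$ and $d_2$ are produced, we get $g=a\cdot b\cdot c\cdot d_2$, $\supp(b)\subseteq\supp(v)$, $[\supp(c),\supp(v)]=1$ and $[e]=[v]$; the identity $w^{g}=d^{-1}\cdot e\cdot d$ then follows by the routine collapse of $d_2^{-1}c^{-1}b^{-1}d_1^{-1}\,v\,d_1bcd_2$, using that $d_1$ commutes with $b$ and $c$, and that $c$ commutes with $v$ and hence with $b$ and with $v^{b}$; and the non-cancellation claims ($g=a\cdot b\cdot c\cdot d_2$, $d=d_1\cdot d_2$, $w^{g}=d^{-1}\cdot e\cdot d$) come from the maximality of the cancellation in the first step and of the absorption in the second.
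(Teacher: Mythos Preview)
The paper does not give its own proof of this corollary: it is quoted verbatim from \cite[Corollary 2.4]{DKR2} with only a remark that the definition of $[g]$ there is misstated. So there is no in-paper argument to compare against; your proposal is being measured against the cited source, not against anything in this manuscript.

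On its own merits, your outline is the standard route and matches what one finds in \cite{DKR2} and \cite{EKR}: first strip the cancellation between $u$ and $g$ via the Cancellation Lemma to produce $a$ and $d_1$ with $ug=d_1\cdot g'$, then run the letter-by-letter absorption argument for conjugation of the cyclically minimal element $v$ by $d_1\cdot g'$, using that $d_1^{-1}\cdot v\cdot d_1$ is already geodesic (as a subword of the geodesic $w$) to force every absorbed letter of $g'$ to commute past $\supp(d_1)$ before touching $v$. Two points that a full write-up must not skip: first, the letters you assign to $b$, $c$, $d_2$ are interleaved in $g'$, so you must argue (via the Transformation Lemma and the commutation constraints you have recorded) that they can be reordered into a geodesic product $b\cdot c\cdot d_2$, and hence that $g=a\cdot b\cdot c\cdot d_2$ is geodesic; second, you must check that $d=d_1\cdot d_2$ is geodesic, which amounts to showing that no initial letter of $d_2$ cancels against $d_1$ --- this follows because such a letter would have had to commute past $d_1$ and then either cyclically permute $v$ or commute with it, contradicting its membership in $d_2$. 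With those two verifications made explicit your sketch becomes a complete proof.
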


The next result is a direct consequence of  this lemma.
\begin{corollary}\label{cor:conj1}
Let $Y$ be a subset of $A$ and let $w,g$ be (minimal forms of) elements of $\GG$
such that $w$ and $w^g$ belong to $\la Y\ra$ and $g$ has no right or left 
divisor in $\la Y\ra$. Then $[\supp(g),\supp(w)]=1$. (That is $[x,y]=1$, for all $x\in \supp(g)$ and  $y\in \supp(w)$.)
\end{corollary}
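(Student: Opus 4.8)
The plan is to deduce this immediately from Corollary~\ref{cor:conj}. First I would write $w$ in the form $w = u^{-1}\cdot v\cdot u$, a minimal form with $v$ cyclically minimal (such a decomposition exists, as recalled above, e.g.\ by Proposition~3.9 of \cite{DK}), and apply Corollary~\ref{cor:conj} to $w$ and $g$. This produces minimal forms $a,b,c,d_1,d_2,e$ with $g = a\cdot b\cdot c\cdot d_2$, $u = d_1\cdot a^{-1}$, $d = d_1\cdot d_2$, $w^g = d^{-1}\cdot e\cdot d$, $\supp(b)\subseteq\supp(v)$ and $[\supp(b\cdot c),\supp(d_1)] = [\supp(c),\supp(v)] = 1$.

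The key point is then purely a bookkeeping one with supports: since $w$ and $w^g$ both lie in $\la Y\ra$, so does every syllable appearing above. Indeed, $w = u^{-1}\cdot v\cdot u$ being a minimal form gives $\supp(u)\cup\supp(v) = \supp(w)\subseteq Y$; then $u = d_1\cdot a^{-1}$ being minimal gives $\supp(a),\supp(d_1)\subseteq\supp(u)\subseteq Y$; also $\supp(b)\subseteq\supp(v)\subseteq Y$ directly; and $w^g = d^{-1}\cdot e\cdot d$ being minimal with $d = d_1\cdot d_2$ gives $\supp(d_2)\subseteq\supp(d)\subseteq Y$. Hence $a,\,b,\,d_2\in\la Y\ra$.

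Next I would invoke the hypothesis that $g$ has no (nontrivial) left or right divisor in $\la Y\ra$ — the same reading of this condition as in Lemma~\ref{lem:doublecoset}. From $g = (a\cdot b)\cdot(c\cdot d_2)$, the element $a\cdot b\in\la Y\ra$ is a left divisor of $g$, so $ab = 1$; since $l(ab) = l(a)+l(b)$ this forces $a = b = 1$. Symmetrically $d_2\in\la Y\ra$ is a right divisor of $g$, so $d_2 = 1$. Therefore $g = c$ and $u = d_1$, and the two commutation relations supplied by Corollary~\ref{cor:conj} collapse to $[\supp(g),\supp(u)] = 1$ and $[\supp(g),\supp(v)] = 1$. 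Since $\supp(w) = \supp(u)\cup\supp(v)$, this yields $[\supp(g),\supp(w)] = 1$, as required.

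I do not expect a serious obstacle here: the whole argument is a translation of Corollary~\ref{cor:conj} once one observes that membership of $w$ and $w^g$ in the parabolic subgroup $\la Y\ra$ propagates to all the syllables of the decompositions (using that in a minimal form $x\cdot y\cdot z$ one has $\supp(x),\supp(y),\supp(z)\subseteq\supp(xyz)$), and that the divisor hypothesis then kills the ``outer'' pieces $a$, $b$ and $d_2$. The only things needing a moment's care are reading ``no divisor in $\la Y\ra$'' as ``no nontrivial divisor'', and noting that $a\cdot b$ is genuinely a left divisor of $g$ and $d_2$ a right divisor, both immediate from associativity of the $\cdot$-product on minimal forms.
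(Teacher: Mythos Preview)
Your proof is correct and follows essentially the same route as the paper's: apply Corollary~\ref{cor:conj}, observe that $u,v,d\in\la Y\ra$ forces $a,b,d_1,d_2\in\la Y\ra$, use the divisor hypothesis to conclude $g=c$ and $u=d_1$, and read off the result from the commutation relations. Your write-up is simply more explicit about the support bookkeeping and the final union $\supp(w)=\supp(u)\cup\supp(v)$, but the argument is the same.
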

\begin{proof}
In the notation of Corollary \ref{cor:conj} we have
$u,v, d \in \la Y \ra$ since $w$ and $w^g$ are in $\la Y\ra$. Hence
$a, d_1, d_2$ and $b$ are in $\la Y\ra$. 
As $g$ has no left or right divisors in $\la Y \ra$ it follows that
$g=c$ and $u=d_1$. To complete the proof we use 
the fact that $[\supp(b\cdot c),\supp(d_1)]=[\supp(c),\supp(v)]=1$. 
\end{proof}

\begin{corollary}\label{cor:conjfix}
Let $Y\subseteq A$ be a clique and let $w,g$ be (minimal forms of) elements of $\GG$
such that $w$ and $w^g$ belong to $\la Y \ra$. Then 
$[\supp(g),\supp(w)]=1$.
\end{corollary}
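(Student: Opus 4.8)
The plan is to reduce the statement to Corollary~\ref{cor:conj1} by replacing $g$ with a suitably normalised representative of its double coset with respect to $U=\la Y\ra$. The feature that makes this work, and that I will use repeatedly, is that since $Y$ is a clique the subgroup $U$ is free Abelian.

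First I would choose an element $d$ of minimal length in the double coset $UgU$, so that $g=udv$ for some $u,v\in U$; by Lemma~\ref{lem:doublecoset} (or directly, since a nontrivial left or right divisor of $d$ lying in $U$ would produce a strictly shorter element of $UgU$) the element $d$ has no left or right divisor in $U$. Next I would check that $w^d$ still lies in $\la Y\ra$: for any $u',v'\in U$,
\[
w^{u'gv'}=v'^{-1}g^{-1}u'^{-1}wu'gv'=v'^{-1}g^{-1}wgv'=v'^{-1}(w^g)v'=w^g,
\]
where the second equality uses $u'^{-1}wu'=w$ (valid as $w,u'\in U$ and $U$ is Abelian) and the last uses $w^g,v'\in U$; taking $u'gv'=d$ gives $w^d=w^g\in\la Y\ra$. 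Now $w\in\la Y\ra$, $w^d\in\la Y\ra$ and $d$ has no left or right divisor in $\la Y\ra$, so Corollary~\ref{cor:conj1} yields $[\supp(d),\supp(w)]=1$. To finish, note $\supp(g)=\supp(udv)\subseteq\supp(u)\cup\supp(d)\cup\supp(v)\subseteq Y\cup\supp(d)$; since $w\in\la Y\ra$ gives $\supp(w)\subseteq Y$ and $Y$ is a clique, every letter of $Y$ commutes with every letter of $\supp(w)$, and together with $[\supp(d),\supp(w)]=1$ this gives $[\supp(g),\supp(w)]=1$.

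The only delicate point is the verification that $w^d\in\la Y\ra$, and this is precisely where the clique hypothesis is indispensable: when $U$ is not Abelian, conjugating $w^g$ on the left by an element of $U$ need not return an element of $\la Y\ra$, so one cannot freely pass from $g$ to a double-coset representative, and the conclusion does indeed fail without the clique assumption. Everything else is routine bookkeeping with supports and lengths, already packaged in Lemma~\ref{lem:doublecoset} and Corollary~\ref{cor:conj1}.
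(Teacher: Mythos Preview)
Your proof is correct and follows essentially the same route as the paper's: both arguments strip off the $\la Y\ra$-parts of $g$ (you via a minimal double-coset representative $d$, the paper via a factorisation $g=a\cdot b\cdot c$ with $a,c\in\la Y\ra$), use the Abelianness of $\la Y\ra$ to see that the conjugate of $w$ by the stripped element still lies in $\la Y\ra$, and then invoke Corollary~\ref{cor:conj1}. The only cosmetic difference is that you package the normalisation via Lemma~\ref{lem:doublecoset} rather than writing the factorisation directly.
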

\begin{proof}
If $g\in \la Y\ra$, the result holds as $Y$ is a clique. Otherwise
we may write 
$g=a\cdot b\cdot c$, where $a,c\in \la Y\ra$ and $b$ has no left or 
right divisor in $\la Y \ra$. Then $w^g=w^{b\cdot c}\in \la Y\ra$. Again,
 as $Y$ is a clique and $c\in \la Y \ra$, we have $w^{b\cdot c}=w^b$.  
From  Corollary \ref{cor:conj1}, $[\supp(b),\supp(w)]=1$, and as $Y$ is  a clique, $[\supp(g),\supp(w)]=1$, as claimed.
\end{proof}

For a subset $U$ of $A$ denote
by $\G_U$ the full subgraph of $\G$ generated by $U$ and for a cyclically
minimal word  $w$  over $\GG$ set $\G_w=\G_{\supp(w)}$. 
For a subset $Y\subset A$ define $\lk(Y)=\cap_{y\in Y}\lk(y)$    
and $\st(Y)=\cap_{y\in Y}\st(y)$.
For an element $w\in \GG$ define $\lk(w)=\lk(\supp(w))$ and $\st(w)=\st(\supp(w))$.  
From \cite[Korollar 3]{B}, for $a\in A$ we have $C_\GG(a)=\la a\ra\times \la \lk(a)\ra$.

As necessary we shall use a normal form for 
 elements of the partially commutative group $\GG(\G)$,
which we now  define. Let $\D$ be the complement of $\G$ ($\D$ has the same vertex set as $\G$ and $\{u,v\}$ is an edge
of $\D$ if and only if $\{u,v\}$ is not an edge of  $\G$). For any element $w$ of  $\GG$ define $\D(w)$ to
be the full subgraph of $\D$ with vertices $\supp(w)$. If $\D(w)$ is connected we call $w$ a \emph{block}. If
$\D(w)$ has connected components  $\D_1,\ldots , \D_k$ then it follows that $w=w_1\cdot \,\cdots\, \cdot w_k$, where
$\D(w_i)=\D_i$ and $[\supp(w_i),\supp(w_j)]=1$, for $i\neq j$. We call this factorisation of $w$ the
\emph{block decomposition} of $w$. 

In the sequel, we shall use the notation and results of this section without further mention. 
\section{Partially commutative groups as HNN-extensions}\label{sec:hnndiag}

In this section we review the special case of diagrams 
over HNN-extensions we need in the proof of Theorem \ref{thm:main}. We refer the reader to 
\cite[Pages 291--292]{LS} for basic results on diagrams over HNN-extensions and to \cite{J} for a more general
version of the definitions given here. As above,  let $\GG$ be a partially commutative group with commutation graph $\G$ and 
canonical presentation $\la A| R\ra$. 
 To realise $\GG$ as an HNN-extension,  
given any $t\in A$, set  $\GG_t=\la A\backslash\{t\}\ra$ and define the  HNN-extension  
\begin{align}
\textrm{HNN}(t)
&=\la \GG_t, t\,|\, t^{-1}xt=x, \forall x\in \lk(t)\ra.\label{eq:hnn}
\end{align}
By definition  
HNN$(t)$ is the group with 
presentation $\la A\,|\, R_t\cup\{t^{-1}ut=u, u\in \la\lk(t)\ra\}\ra$, where $R_t=\{[x,y]\in R\,|\, x\neq t\textrm{ and } y\neq t\}$.
We may perform Tietze transformations on the latter to replace it with the presentation
$\la A \,|\, R_t\cup\{t^{-1}xt=x, x\in \lk(t)\}\ra$. As $R_t\cup\{t^{-1}xt=x, x\in \lk(t)\}=R$, it follows that HNN$(t)=\GG$. 
We call HNN$(t)$ the \emph{HNN-presentation of} 
$\GG$ with respect to $t$.

For notational simplicity, write 
\biz
\item $F$ for the group $\GG=\hnn(t)$ with the HNN-presentation with respect to $t$, 
\item $H=\GG_t$ and   $U=\la \lk(t)\ra$, and in addition
\item let $D$ be a set of double coset representatives for $U$ in $H$, satisfying the properties of Corollary \ref{lem:doublecoset}, 
and let $\s$ be the function from $H$ to $D$ given by $\s(g)=d$, where $UgU=UdU$.
\eiz

(We assume that all elements of $F$ are represented
as reduced words of the free product  $H*\la t\ra$, unless an explicit exception is made.) 
Given $p\in F$, the factorisation 
\begin{equation}\label{eq:redfac}
p=g_0t^{\e_1}\cdots t^{\e_n}g_n,
\end{equation} 
where $g_i\in H$ and $\e_i\in \{\pm 1\}$, 
is called  \emph{reduced}, over $F$, if either $n\le 1$  or 
$n>1$ and, for $1\le i\le n-1$,  if $g_i\in U$ then $\e_i\e_{i+1}=1$. In this case we say that $p$ has  $t${\em -length}
\[|p|_t=\sum_{i=1}^n |\e_i|.\] Every element
of $F$ has a reduced factorisation, and two reduced factorisations which represent the same element have 
the same $t$-length. We explicitly allow reduced words to contain sub-words of the form $t^\e u t^\e$, where 
$u\in U$ and $\e =\pm 1$.

If $u$ and $v$ are elements of $F$ such that $|uv|_t=|u|_t+|v|_t$, then we say the  product $uv$ is \emph{reduced} 
(if and only if, given reduced factorisations \[
u=g_0t^{\e_1}\cdots t^{\e_m}g_m\textrm{ and }v=h_0t^{\d_1}\cdots t^{\d_n}h_n,\]  
either $m+n\le 1$ or $g_mh_0\notin U$ or  $\e_m=\d_1$).   
More generally, if $u_1,\ldots, u_n$ are reduced factorisations of elements of $F$ such that 
\begin{equation}\label{eq:prodred}
|u_1\cdots u_n|_t=\sum_{i=1}^n |u_i|_t
\end{equation} 
we say that
the product $u_1\cdots u_n$ is reduced. 
An 
element  $p$ of $F$  is said to be \emph{cyclically reduced}  if the product  
$pp$ is reduced. 
(That is, given a reduced factorisation 
$p= g_0t^{\e_1}\cdots t^{\d_n}g_n$, either  $n\le 1$ or
 $g_ng_0\notin U$ or $\e_n=\e_1$.) Again, for $u_1,\ldots, u_n$ as above, the product $u_1\cdots u_n$ is said to
be \emph{cyclically reduced} if  it is reduced and $u_nu_1$ is reduced. 

The aim of the definitions in the remainder of this sub-section is to allow us to avoid products which split elements of $H$ in a non-trivial fashion. 
For example, if $a, b$ and $g$ are elements of $H\backslash U$ with $g=ab$, and we have  
$s=tgt$ 
then the factorisation $pq$, where $p=ta$ and $q=bt$, splits the factor $g=ab$.  
With this in mind, let the  element $p\in F$ have reduced factorisation \eqref{eq:redfac}. 
If $g_0=1$ then  $p$ is said to \emph{begin} with a $t$-letter and if $g_n=1$ then $p$ is said to \emph{end} with 
a $t$-letter. Now if $u_1,\ldots , u_k$ are reduced factorisations of elements of $F$ then   the 
product $u_1\cdots u_k$ is said to be \emph{right integral at} $i$, where $1\le i\le k$,  if it is  cyclically reduced and 
$u_i$ ends with a $t$-letter and  \emph{left integral at} $i$ if  $u_{i+1}$ begins with a $t$-letter (subscripts modulo $k$).
The product $u_1\cdots u_k$  is \emph{integral at} $i$ if it is either left or right integral at $i$. (Note that this is not quite the same
as the definition in \cite{J}.)
The  product $u_1\cdots u_k$ is said to be \emph{(right) integral}\label{def:integral} if it is (right) integral at $i$, 
for all $i\in \{1,\ldots ,k\}$. We say that $u$ is a \emph{(right) integral} subword of $w$ if $w=u\cdot v$, where
$uv$ is a (right) integral factorisation. 
[Note that the integral property depends on the choice of reduced factorisation of factors. For example, 
$w=tuht$, where $u\in U$, $h\in H\backslash U$, can be written as the product $pq$, 
where $p=tu$ and $q=ht$. This product is not integral, but we may write  $p=ut$ in $F$ and using this factorisation
of $p$ the product $pq$ is integral.]

\subsection{Thick subwords of HNN$(t)$}\label{sec:tthick}

Let $G$ be a group and $K$ a non-trivial subgroup of $G$. Define $\maln_G(K)=\{x\in G\,:\,x^{-1}Kx\cap K=\{1\}\}$.  
It follows that $x\in \maln_G(K)$ if and only if $x^{-1} \in \maln_G(K)$. Also, $x\in \maln_G(K)$ if and only if $KxK\subseteq \maln_G(K)$. Indeed,  
if $x\in \maln_G(K)$ and $u,v\in K$ then $(uxv)^{-1}K(uxv)\cap K= v^{-1}x^{-1}Kxv\cap K= v^{-1}x^{-1}Kxv\cap  v^{-1}Kv=v^{-1}(x^{-1}Kx\cap K)v=\{1\}$. 

\begin{lemma}\label{lem:maln}
Let $V$ be a subgroup of the group $\GG$ and let $D_V$ be a set of representatives of double cosets $VgV$ in $\GG$, satisfying 
the conditions of Corollary \ref{lem:doublecoset}. If $a,b\in \maln_{\GG}(V)$ and $u,v\in V$ are such that 
$aub^{-1}=v$ then  $VaV=VbV$ and there exist $d\in D_V$, $u',v'\in V$ such that $a=(vv')\cdot d\cdot u'$ and $b=v'\cdot d\cdot (u'u)$. 
\end{lemma}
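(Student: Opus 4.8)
The plan is to start from the relation $aub^{-1}=v$, which gives $a = vbu^{-1}$, so immediately $a \in VbV$ and hence $VaV = VbV$. Let $d \in D_V$ be the double coset representative with $VaV = VbV = VdV$. Then there exist $p, q, r, s \in V$ with $a = p \cdot d \cdot q$ and $b = r \cdot d \cdot s$, where, by the properties in Corollary \ref{lem:doublecoset} (specifically that $d$ has no left or right divisor in $V$), we may take these to be reduced products in the length sense, i.e. $l(a) = l(p) + l(d) + l(q)$ and similarly for $b$. The goal is to show the front factors $p$ and $r$, and the back factors $q$ and $s$, are linked by the single element $v$ on the left and $u$ on the right, respectively; that is, $p = vr$ (so $p r^{-1} = v$) and $q = s u$ — which is exactly the claimed form with $v' = r$, $u' = s$, after renaming.

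The key step is to exploit malnormality. Substituting the factorisations into $aub^{-1} = v$ gives $p\, d\, q\, u\, s^{-1}\, d^{-1}\, r^{-1} = v$, i.e.
\[
d\, (q u s^{-1})\, d^{-1} = p^{-1} v r \in V.
\]
Now $a \in \maln_\GG(V)$, and since $a = p\cdot d\cdot q$ with $p, q \in V$, the double-coset observation preceding the lemma shows $d \in \maln_\GG(V)$ as well (because $\maln_\GG(V)$ is a union of double cosets $VxV$, and $a$ and $d$ lie in the same one). Therefore $d^{-1} V d \cap V = \{1\}$, so from $d (q u s^{-1}) d^{-1} \in V$ we get that $d(qus^{-1})d^{-1}$ — being also conjugate to the element $qus^{-1}\in V$ — forces $qus^{-1} = 1$, i.e. $q = s u^{-1}$... wait, I need $q$ expressed so that $b$ picks up $u$ on the right; let me instead read it as $q u = s$ being wrong too. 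The correct bookkeeping: $qus^{-1}=1$ means $s = qu$, hence $b = r\cdot d\cdot s = r\cdot d\cdot(qu)$ and $a = p\cdot d\cdot q$. Simultaneously $d(qus^{-1})d^{-1} = 1$ gives $p^{-1}vr = 1$, so $p = vr$. Setting $v' := r$ and $u' := q$ yields $a = (vv')\cdot d\cdot u'$ and $b = v'\cdot d\cdot (u'u)$, as required. (One should double-check the $l(a)=l(p)+l(d)+l(q)$ reducedness is preserved when writing $p = vr$; this is automatic since we only rename the left $V$-factor.)

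The main obstacle I expect is the careful handling of the malnormality conclusion: from $d X d^{-1} \in V$ with $X \in V$ and $d \in \maln_\GG(V)$ one concludes $X \in d^{-1}Vd \cap V$, but malnormality as defined says $d^{-1}Vd \cap V = \{1\}$ only when the conjugating element $d \notin V$; if $d \in V$ the intersection is all of $V$. So the genuinely delicate point is the case $d = 1$ (equivalently $a, b \in V$), which by Lemma \ref{lem:doublecoset}\ref{it:dc1} is the representative of the trivial double coset. In that case $a, b \in V$, and from $aub^{-1} = v \in V$ there is nothing to prove beyond taking $d = 1$, $u' = 1$... but then $b = v'\cdot d\cdot(u'u) = v' u$ and $a = (vv')\cdot 1 \cdot 1 = vv'$ forces $v' = b u^{-1}$ and $vv' = a$, consistent since $a = v b u^{-1}$. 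So the $d=1$ case is handled directly and the $d \neq 1$ case uses malnormality honestly. A second minor issue is confirming that $d \in \maln_\GG(V)$: this needs $\maln_\GG(V) \supseteq V a V = VdV$ whenever $a \in \maln_\GG(V)$, which is precisely the displayed computation in the paragraph before the lemma statement ($VxV \subseteq \maln_G(K)$ when $x \in \maln_G(K)$) applied with $V$ in place of $K$.
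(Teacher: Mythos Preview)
Your proof is correct and follows essentially the same approach as the paper's: write $a$ and $b$ in terms of the common double coset representative $d$, substitute into $aub^{-1}=v$, and use $d\in\maln_\GG(V)$ (inherited from $a$ via the double-coset closure property) to force the inner $V$-element to be trivial. One small simplification you missed: since $\maln_\GG(V)$ is only defined for nontrivial $V$, and $a\in\maln_\GG(V)$ forces $a\notin V$, the case $d=1$ cannot occur, so your separate treatment of it is unnecessary (though harmless).
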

\begin{proof}
Note first that $x\in \maln_{\GG}(V)$ if and only if $VxV\subseteq \maln_{\GG}(V)$. 
As $aub^{-1}\in V$ we have 
$VaV=VbV$. Let $d$ the element of $D_V$ such that $VaV=VdV=VbV$. Then there exist 
$a_l, b_r, u',v'\in V$ such that $a=a_l\cdot d\cdot u'$ and $b=v'\cdot d\cdot b_r$. By assumption 
\[a_ldu'ub_r^{-1}d^{-1}v'^{-1}=aub^{-1}=v,\]
and since $a,b\in \maln_{\GG}(V)$ we have $d\in \maln_{\GG}(V)$, so $u'ub_r^{-1}=a_l^{-1}vv'=1$, from which the final statement follows. 
\end{proof}
\begin{defn}\label{def:tthick}
Let $w=g_0t^{\e_1}\cdots g_{k-1}t^{\e_k}g_k$ be a reduced factorisation of an element of $HNN(t)$, where $g_i\in \GG_t$ and $\e_i=\pm 1$,  
and let $U=\la \lk(t)\ra\le \GG_t$.  
We say $w$ is $t$\emph{-thick} 
if $g_i\in U\cup \maln_{\GG_t}(U)$, for $i=0,\ldots ,k$. We say $w$ is \emph{cyclically} $t$\emph{-thick} if $w$ is cyclically reduced, $t$-thick  
and $g_kg_0\in  U\cup \maln_{\GG_t}(U)$.
\end{defn}

The notion of $t$-thickness is well-defined since, if  $w$ has another reduced factorisation $w=h_0t^{\d_1}\cdots h_{m-1}t^{\d_m}h_m$, where $\d_i=\pm 1$, then 
$m=k$, $\e_i=\d_i$ and there are elements $u_0,\ldots ,u_{m-1}\in U$ such that $g_0=h_0u_0^{-1}$, $g_m=u_{m-1}h_m$ and $g_i=u_{i-1}h_iu_i^{-1}$, for 
$1\le i\le m-1$. Hence, from the comment preceding Lemma \ref{lem:maln}, $h_i\in U\cup \maln_{\GG_t}(U)$ if and only if $g_i\in U\cup \maln_{\GG_t}(U)$.  

\begin{lemma}\label{lem:thick}
Let $\GG=\la A\,|\,R\ra$ be a partially commutative group, let $B$ be a subset of $A$ and let $K=\la B\ra$. If  
$B$ is a clique and 
 $w$ is an element of $\GG\bs K$ then the following are equivalent. 
\be[label=(\roman*)]
\item\label{it:thick1} $w$ belongs  to $\maln_{\GG}(K)$. 
\item\label{it:thick2} For all $b\in B$, 
there exists $x\in \supp(w)\bs B$ such that 
$[x,b]\neq 1$.
\ee 
\end{lemma}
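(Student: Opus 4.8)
The plan is to establish the two implications separately, each by contraposition, exploiting the fact that the clique hypothesis on $B$ puts both directions within immediate reach of results already in hand: Corollary \ref{cor:conjfix} for one direction and the description of the centraliser of a generator for the other. Throughout, recall that $K=\la B\ra$ is free abelian (as $B$ is a clique) and nontrivial (the statement being vacuous if $B=\nul$), so $\maln_\GG(K)$ is defined.

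For \ref{it:thick2}$\Rightarrow$\ref{it:thick1}, I would assume \ref{it:thick1} fails, so that there is a nontrivial $k\in K$ with $w^{-1}kw\in K$. Since $B$ is a clique, I can apply Corollary \ref{cor:conjfix} with $Y=B$, taking $k$ in the role of its "$w$" and $w$ in the role of its "$g$"; this yields $[\supp(w),\supp(k)]=1$. Because $k\neq 1$ we may pick $b\in\supp(k)\subseteq B$, and then $[x,b]=1$ for every $x\in\supp(w)$, in particular for every $x\in\supp(w)\bs B$, which contradicts \ref{it:thick2}. Hence \ref{it:thick2} implies \ref{it:thick1}.

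For \ref{it:thick1}$\Rightarrow$\ref{it:thick2}, I would assume \ref{it:thick2} fails, so that there is some $b\in B$ with $[x,b]=1$ for all $x\in\supp(w)\bs B$. Since $B$ is a clique, every $x\in\supp(w)\cap B$ also commutes with $b$ (trivially when $x=b$, and because $\G_B$ is complete otherwise), so $\supp(w)\subseteq\st(b)$ and therefore $w\in\la\st(b)\ra=C_\GG(b)$, using $C_\GG(b)=\la b\ra\times\la\lk(b)\ra$ from \cite[Korollar 3]{B}. Then $w^{-1}bw=b$ is a nontrivial element of $w^{-1}Kw\cap K$, so $w\notin\maln_\GG(K)$; that is, \ref{it:thick1} fails. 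This completes the equivalence.

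I do not anticipate a genuine obstacle: the mathematical content is entirely absorbed by Corollary \ref{cor:conjfix} and the centraliser formula, combined with the elementary observation that in a clique every vertex lies in $\st(b)$ for each fixed $b\in B$. (The hypothesis $w\notin K$ is in fact not used: if $w\in K$ then $\supp(w)\subseteq B$, so \ref{it:thick2} fails, and $w$ normalises $K$, so \ref{it:thick1} fails too.) The only care needed is to match up the roles of the symbols correctly when invoking Corollary \ref{cor:conjfix}, since there the conjugated element lies in the clique subgroup, not the conjugator.
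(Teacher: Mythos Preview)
Your proof is correct. The direction \ref{it:thick1}$\Rightarrow$\ref{it:thick2} is essentially the same as the paper's: both observe that if $w\in\maln_\GG(K)$ then $w\notin C_\GG(b)$ for each $b\in B$, and since $B$ is a clique any witness $x\in\supp(w)$ with $[x,b]\neq 1$ must lie outside $B$.

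For \ref{it:thick2}$\Rightarrow$\ref{it:thick1}, however, you take a genuinely different and shorter route. The paper argues directly via a minimal-counterexample induction on $l(w)$: assuming $w\notin K\cup\maln_\GG(K)$ satisfies \ref{it:thick2} and is shortest with these properties, one first strips any left divisor of $w$ lying in $K$, then uses the Cancellation Lemma to show that either $w^{-1}\cdot v\cdot w$ is a minimal form (forcing $\supp(w)\subseteq B$, a contradiction) or $w$ has a left divisor $x$ commuting with $v$, allowing passage to the shorter $w_2=x^{-1}w$. Your argument bypasses this induction entirely by invoking Corollary~\ref{cor:conjfix}, which already packages the relevant conjugation analysis for clique subgroups. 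This is cleaner and exploits material the paper has set up in Section~\ref{sec:prelim} but does not actually call upon here; the paper's approach, by contrast, is more self-contained, relying only on the Cancellation Lemma rather than the chain Corollary~\ref{cor:conj}~$\to$~\ref{cor:conj1}~$\to$~\ref{cor:conjfix}.
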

\begin{proof}
To see that \ref{it:thick1} implies \ref{it:thick2}, 
 assume $w\in \maln_{\GG}(K)$ and let $b\in B$. Then $w^{-1}bw\neq b$, so $w\notin C_{\GG}(b)$ and it follows that 
there exists $x\in \supp(w)\bs B$ such that 
$[x,b]\neq 1$. 

To prove \ref{it:thick2} implies \ref{it:thick1}, 
consider $w\in \GG\bs K$ such that  $w\notin \maln_{\GG}(K)$. Assume, in order to obtain a contradiction, 
that $w$ is of minimal length amongst all elements not in $K\cup \maln_{\GG}(K)$ which satisfy \ref{it:thick2}. 
 As $w\notin \maln_{\GG}(K)$, 
 there exists $1\neq v\in K$ such that  
$w^{-1}vw\in K$. We may write $w=w_0\cdot w_1$, where $w_0\in K$ and  $w_1$ is non-trivial and has no (non-trivial) left divisor in
$K$. By minimality of $w$, we conclude that $w_0=1$, and so we may assume $w$ has no left divisor in $K$. 
As $w$ has no left divisor
in $K$, (from the Cancellation Lemma) $w^{-1}vw=w^{-1}\cdot v\cdot w$,  unless $w$ has a left divisor $x\in A^{\pm 1}$ such that 
$[x, v]=1$. In the latter case,  $x$ must commute with all elements of $\supp(v)$.
 We may write $w=x\cdot w_2$ and, by the assumption on $w$,  
$\supp(w_2)$ must contain an element which does not commute
with an element of $\supp(v)$; so $w_2\notin K$. Moreover $w_2^{-1}vw_2=w^{-1}vw\in K$. It follows that $w_2$ is 
shorter than $w$ and satisfies all the same properties, contrary to the choice of $w$. 
We conclude that $w^{-1}vw=w^{-1}\cdot v\cdot w$, so that $w\in K$, a contradiction. 
 Therefore, for all $w$
 not in $K\cup \maln_{\GG}(K)$  property \ref{it:thick2} fails, as required.
\end{proof}

\subsection{Diagrams over HNN extensions}\label{sec:diag}
In this section we follow  \cite[Chapter V, Section 11]{LS}.
Assume that $\GG$ is expressed as the  HNN-extension $F=\hnn(t)$, 
and set $H=\GG_t$ and $U=\la \lk(t)\ra$, as above. 
 A set $\cR$ of elements of $F$ is said to be \emph{symmetrised} if every element of $\cR$ is cyclically reduced and, for all $r\in \cR$,
all cyclically reduced conjugates of $r$ and $r^{-1}$ are in $\cR$. 
The \emph{symmetrised closure} $\widetilde \cS$ of  a set $\cS$ of cyclically reduced elements  
of $F$ is the smallest symmetrised subset containing $\cS$; and consists of all cyclically reduced  conjugates of $s$, for 
all elements $s\in \cS\cup \cS^{- 1}$.  From Collins' Lemma (see \cite[Theorem 2.5, p. 185]{LS}) if $r\in F$ is cyclically reduced and ends in
$t^{\pm 1}$ then
a  cyclically reduced element $s\in F$, ending in $t^{\pm 1}$, is a  conjugate of $r$ if and only if it 
may be obtained by taking a cyclic permutation of $r$ and then conjugating by an
element of $U$. This means that even though $\cR$ may be finite its symmetrised closure $\widetilde \cR$ is, in general, infinite.
\begin{defn}\label{defn:piece}
Let $\widetilde \cR$ be a symmetrised subset of $F$. 
An element $p\in F$ is a \emph{piece} (\emph{over} $\widetilde \cR$) if there exist distinct elements $r_1$, $r_2\in \widetilde \cR$, and 
elements $u_1, u_2$ of $F$, such that $r_i$ factors as a reduced product $r_i=_F pu_i$, for $i=1$ and $2$. 
\end{defn}

\begin{defn}\label{defn:sc}
Let $\widetilde \cR$ be a symmetrised subset of $F$ and let $m$ be a positive integer. If $w\in\widetilde \cR$ has a reduced factorisation $w=p_1\cdots p_k$, where $p_i$ is a piece over $\widetilde \cR$, 
then $w$ is said to have a $k$\emph{-piece} factorisation. 
If no element of $\widetilde \cR$ has a $k$-piece factorisation, where $k< m$, then $\widetilde \cR$ is said to
satisfy \emph{small cancellation condition} $C(m)$.   
\end{defn}

For details of disc diagrams over HNN-extensions we refer the reader to \cite[pp. 291--294]{LS}, and the references therein. 
We outline here only what we need below, in particular defining diagrams on a disk. 
Let $\widetilde \cR$ be a symmetrised subset of $F$ and $w$ a reduced factorisation of an element of $F$. An $\widetilde \cR$ 
\emph{diagram with boundary label} $w$ \emph{over} $F$ consists of the following. A finite $2$-complex $M$ with underlying space $\S$  a compact, connected, 
simply connected, subset of the real plane; and a
 distinguished vertex $O$ of $M$ on $\pd \S$. ($0$-cells, $1$-cells and $2$-cells of $M$ are called vertices, edges and
 regions, respectively.) A labelling function $\phi$ from oriented edges of $M$   to $H\cup \{ t^{\pm 1}\}$. (Strictly speaking $\phi$ maps edges of $M$ to freely 
 reduced words in $(A\bs \{t\})^{\pm 1}$ or elements of   $\{ t^{\pm 1}\}$.)
 For an oriented edge $a$ 
 we write $\bar a$ for the same edge given the opposite orientation (if $f:[0,1]\maps a$ is a homeomorphism determining the oriented edge
 $a$ then $\bar a$ is the edge determined by the map $\bar f$ mapping $x\in [0,1]$ to $f(1-x)$.) 
A \emph{boundary cycle} of a region $\D$ of $M$ is a closed path traversing the boundary $\pd \D$ of $\D$ exactly once (beginning and ending at a vertex $v$ of $M$).
A \emph{boundary cycle} of $M$ is a closed path traversing the boundary $\pd \S$ of $\S$ exactly once, beginning and ending at $O$. 
In addition the following conditions must be satisfied. 
\be[(1)]
\item\label{it:diag1}  If $a$ is an oriented edge with label $w=\phi(a)$ then $\phi(\bar a)=w^{-1}$. 
\item\label{it:diag2}  If $M$ has a boundary cycle $p=a_1,\ldots, a_n$, beginning and ending at $O$, then 
the product $\phi(a_1)\cdots \phi(a_n)$ is reduced and equal, in $F$, to $w$ or $w^{-1}$. 
\item\label{it:diag3}  
If $\D$ is a region of $M$ and  $\D$ has a boundary cycle $p=a_1,\ldots, a_n$, then the product $\phi(a_1)\cdots \phi(a_n)$ is reduced and 
equal in $F$ to 
an element $\widetilde \cR$. 
\ee
It follows (see for example \cite[Theorem 11.5, p. 292]{LS}) that there exists a diagram $\cD$ over $F$ with boundary label $w$ 
if and only if $w=1$ in $F/N$, where $N$ is the normal closure of $\cR$ in $F$.\label{p:diagram_prop}

If the label $\phi(a)$ of an edge $a$ is in $H$ then  
 $a$ is called an $H$\emph{-edge}, 
 and if  $\phi(a)=t^{\pm 1}$ then   $a$ is called a $t$\emph{-edge}. If $v$ is a vertex in the boundary $\pd \D$ of a region $\D$ and
 $v$ is incident to   
 a $t$-edge of $\pd \D$, then we call $v$ a \emph{primary vertex}, with respect to $\D$. The products $\phi(a_1)\cdots \phi(a_n)$ appearing
 in \ref{it:diag2} and \ref{it:diag3} are called \emph{boundary labels} of $M$ and $\D$ respectively. In our case $\widetilde \cR$ is
 the symmetrised closure of the single element $r$, which is cyclically minimal as an element of the partially commutative group $\GG$.
 Every element $w$ of $\widetilde \cR$ is therefore equal in $F$ to $u^{-1}r'u$, for some $u\in U$ and cyclic permutation $r'$ of $r$ or $r^{-1}$.
 Let $\D$ be a region with boundary label equal
 in $F$ to $r'$, when read from an appropriate vertex $o$ on its boundary.
Attaching an edge $e$ labelled $u$ by identifying its initial
 vertex with $o$, we obtain a diagram with a single region $\D$ and boundary label $w$. 
 As $r'$ is cyclically minimal we may replace \ref{it:diag3} with
 \be[label=(\arabic*')]
 \addtocounter{enumi}{2}
\item\label{it:diag3'} If $\D$ is a region of $M$ and  $\D$ has a boundary cycle $p=a_1,\ldots, a_n$,
  then the product $\phi(a_1)\cdots \phi(a_n)$ is reduced and 
equal in $F$ to a cyclic permutation of $r$ or $r^{-1}$. 
  \ee

As usual we shall restrict to diagrams which do not have pairs of
redundant regions of the following sort.  Let $\D_1$ and $\D_2$ be distinct regions of the diagram $\cD$   
with boundary cycles $\r_1=\mu\nu_1$ and $\r_2=\mu\nu_2$ (where $\mu, \nu_i$ are subpaths of $\r_i$)  
 meeting in the connected boundary component $\mu$. 
If $\phi(\nu_1)\equiv\phi(\nu_2)$ (as words in the free monoid $(A\cup A^{-1})^*$) then,
 as in Figure \ref{fig:regioncancel}, we may remove the interior  
of $\D_1\cup \D_2$ and identify the sub-paths
$\nu_1$ and $\nu_2$ of the boundary cycles of $\D_1$ and $\D_2$, 
via their (equal) labels $\phi(\nu_1)$ and $\phi(\nu_2)$, to leave a new diagram $\cD'$ with the same reduced boundary label as $\cD$ but fewer regions. 
The modification of $\cD$ to produce $\cD'$ is called a \emph{cancellation 
  of regions}. A diagram in which no cancellation of regions is possible is called \emph{reduced}.  

\begin{figure}
\begin{center}
\pgfmathsetmacro{\di}{4}
\begin{tikzpicture}[scale=.6,arrowmark/.style 2 args={decoration={markings,mark=at position #1 with \arrow{#2}}}]%
  \tikzstyle{every node}=[circle, draw, fill=blue, color=blue,
  inner sep=0pt, minimum width=6pt]
  \draw (-{\di},0) node {};
  \draw ({\di},0) node {};
  \begin{pgfonlayer}{background}
    \draw (0,0) circle ({\di});
    \draw (-{\di},0) -- ({\di},0);
    \draw[-{Latex}] (0,0) -- (0.1,0);
    \draw[-{Latex}] (0.1,{\di}) -- (0,{\di});
    \draw[-{Latex}] (0.1,-{\di}) -- (0,-{\di});
    \draw (-{\di},0) +(150:1) -- (-{\di},0); 
    \draw (-{\di},0) +(210:1) -- (-{\di},0);
    \draw ({\di},0) +(30:1) -- ({\di},0); 
    \draw ({\di},0) +(-30:1) -- ({\di},0);
  \end{pgfonlayer}
  \draw (0,0) +(0,.4) node[draw=none,fill=none,color=black] {$p$};
  \draw (0,{\di}) +(0,.4) node[draw=none,fill=none,color=black] {$q$};
  \draw (0,-{\di}) +(0,-.4) node[draw=none,fill=none,color=black] {$q$};
  \draw (0,\di/2) node[draw=none,fill=none,color=black] {$\D_1$};
  \draw (0,-{\di}/2) node[draw=none,fill=none,color=black] {$\D_2$};
  \draw[-{Latex}, ultra thick,color=red] ({\di+2},0) -- (\di+3,0);
  \begin{scope}[shift={(\di+9,0)}]
    \draw (-{\di},0) node {};
  \draw ({\di},0) node {};
  \begin{pgfonlayer}{background}
    \draw (-{\di},0) -- ({\di},0);
    \draw[-{Latex}] (0.1,0) -- (0,0);
    \draw (-{\di},0) +(150:1) -- (-{\di},0); 
    \draw (-{\di},0) +(210:1) -- (-{\di},0);
    \draw ({\di},0) +(30:1) -- ({\di},0); 
    \draw ({\di},0) +(-30:1) -- ({\di},0);
  \end{pgfonlayer}
  \draw (0,0) +(0,.4) node[draw=none,fill=none,color=black] {$q$};
  \end{scope}
\end{tikzpicture}
\end{center}
\caption{Cancelling regions: $\phi(\mu)\equiv p, \phi(\nu_1)\equiv\phi(\nu_2)\equiv q$}\label{fig:regioncancel}
\end{figure}
It turns out that if a pair of distinct regions $\D_1$ and $\D_2$ have boundary  
 cycles $\r_1=\mu\nu_1$ and $\r_2=\mu\nu_2$, as above, but instead of satisfying $\phi(\nu_1)\equiv \phi(\nu_2)$ the labels satisfy only   
 $\phi(\nu_1)=_F \phi(\nu_2)$ then, after some minor modifications to the diagram, which do not alter the label of its boundary,
 we may again cancel the regions $\D_1$ and $\D_2$. The general process of modification is described in \cite[Page 292]{LS}.
 Here we  describe modifications sufficient for our particular case. 
 There are two types of these; 
 the first of which results in a new diagram in which the product of edge labels around a boundary cycle may not be freely reduced.
 The second type may then be
 used to freely reduce labels on edges of boundary cycles if necessary.  

\paragraph{Shuffling labels.}
Suppose that  a subpath $\mu$ of the boundary of a region $\D$ has label with subword   
$x y$, where $x,y \in A^{\pm 1}$ and $xy=yx$.  Then we may modify the 
diagram $\cD$ (without altering the element of  $F$ represented by its boundary label) to obtain a new diagram 
 in which this subword $xy$ is replaced by $yx$, as follows. 
 If  $xy$ occurs as a subword of the label $\phi(e)=axyb$ of  a single edge $e$ of $\mu$ then we modify 
$\phi$ by setting $\phi(e)=ayxb$. Otherwise $\mu$ contains a subpath $e_1e_2$, where $e_1$ and $e_2$ are edges, 
such that $\phi(e_1)=ax$ and  $\phi(e_2)=yb$. In this case $e_1\subseteq \D\cap \D'$ and $e_2\subseteq \D\cap \D''$, for 
some regions $\D'$ and $\D''$ and we modify $\cD$ as shown in Figure \ref{fig:xyshuffle}.
There is a choice here: we may label the new edge, in $\D'\cap\D''$ with $yx^{-1}$, as shown, or with $x^{-1}y$. 
 In both cases we refer to this modification as  a \emph{shuffle of the label of} $\D$. \label{p:shuffle}
\begin{figure}
\begin{center}
\begin{tikzpicture}[scale=.75,arrowmark/.style 2 args={decoration={markings,mark=at position #1 with \arrow{#2}}}]%
  \tikzstyle{every node}=[circle, draw, fill=blue, color=blue,
  inner sep=0pt, minimum width=6pt]
  \foreach \x  in {0,...,2} {
    \draw (2*\x,0) node {};
  }
  \begin{pgfonlayer}{background}
    \draw (0,0) +(240:1) -- (0,0);
    \draw (0,0) +(120:1) -- (0,0);
    \draw (4,0) +(60:1) -- (4,0);
    \draw (4,0) +(-60:1) -- (4,0);
    \foreach \x/\y in {0/{ax},1/{yb}} {
      \draw[-{Latex}] (2*\x,0) -- (2*\x+1.1,0);
      \draw ((2*\x+1,0) -- (2*\x+2,0);
      \draw (2*\x+1,0) +(0,-.4) node[draw=none,fill=none,color=black] {$\y$};
    }
    \draw (2,0) +(60:1.5) -- (2,0);
    \draw (2,0) +(120:1.5) -- (2,0);
  \end{pgfonlayer}
  \draw (2,-1.5) node[draw=none,fill=none,color=black] {$\D$};
  \draw (.7,1) node[draw=none,fill=none,color=black] {$\D'$};
  \draw (3.3,1) node[draw=none,fill=none,color=black] {$\D''$};
  \draw[-{Latex}, ultra thick,color=red] (6,0) -- (7,0);
  \begin{scope}[shift={(9,0)}]
    \foreach \x  in {0,...,2} {
      \draw (2*\x,0) node {};
    }
    \draw (2,2) node {};
    \begin{pgfonlayer}{background}
      \draw (0,0) +(240:1) -- (0,0);
      \draw (0,0) +(120:1) -- (0,0);
      \draw (4,0) +(60:1) -- (4,0);
      \draw (4,0) +(-60:1) -- (4,0);
      \foreach \x/\y in {0/{ay},1/{xb}} {
        \draw[-{Latex}] (2*\x,0) -- (2*\x+1.1,0);
        \draw ((2*\x+1,0) -- (2*\x+2,0);
        \draw (2*\x+1,0) +(0,-.4) node[draw=none,fill=none,color=black] {$\y$};
      }
      \draw (2,2) +(60:1.5) -- (2,2);
      \draw (2,2) +(120:1.5) -- (2,2);
      \draw[-{Latex}] (2,2) --(2,0.9);
      \draw (2,1) -- (2,0);
    \end{pgfonlayer}
    \draw (2,1) +(.8,0) node[draw=none,fill=none,color=black] {$yx^{-1}$};
    \draw (2,-1.5) node[draw=none,fill=none,color=black] {$\D$};
    \draw (.7,1.8) node[draw=none,fill=none,color=black] {$\D'$};
    \draw (3.3,1.8) node[draw=none,fill=none,color=black] {$\D''$};
  \end{scope}
\end{tikzpicture}
\end{center}

\caption{Shuffling in the boundary label of $\D$.}\label{fig:xyshuffle}
\end{figure}
 
\paragraph{Free reduction of labels.}
 Let $\cD$ be an $\widetilde \cR$-diagram over $F$ 
  and let $\D$ be a region of $\cD$. 
 Suppose a subpath $\mu$ of a  boundary cycle of $\D$ has label containing a subword $xx^{-1}$, where 
$x\in A^{\pm 1}$. If  $xx^{-1}$ is a subword of  the label $\phi(e)=axx^{-1}b$ of a single edge $e$, where $a,b\in H$, then 
we modify $\phi$ by setting $\phi(e)=ab$. (Such edges do not occur in $\cD$ itself, 
but may arise after a shuffling of labels as above.)  Otherwise $\mu$ contains a subpath
$e_1e_2$ of two edges such that $\phi(e_1)=ax$ and  $\phi(e_2)=x^{-1}b$, where $a,b\in H$ (and if $x=t^{\pm 1}$ then necessarily $a=b=1$,
since $\phi(e_i)$ is by definition either an element of $H$ or of $\{t^{\pm 1}\}$).
In this case we modify $\cD$ as shown in Figure \ref{fig:xcancel}. We call this \emph{free reduction of the label of} $\D$. 
 Note that this reduces the sum of lengths of boundary labels of regions of $\cD$, where length here means length as a 
word over $A\cup A^{-1}$, so we may repeat free reductions of labels until no such subpaths occur in 
the boundary label of any region.  
\begin{figure}
  \begin{center}
\begin{tikzpicture}[scale=.75,arrowmark/.style 2 args={decoration={markings,mark=at position #1 with \arrow{#2}}}]%
  \tikzstyle{every node}=[circle, draw, fill=blue, color=blue,
  inner sep=0pt, minimum width=6pt]
  \foreach \x  in {0,...,2} {
    \draw (2*\x,0) node {};
  }
  \begin{pgfonlayer}{background}
    \draw (0,0) +(240:1) -- (0,0);
    \draw (0,0) +(120:1) -- (0,0);
    \draw (4,0) +(60:1) -- (4,0);
    \draw (4,0) +(-60:1) -- (4,0);
    \foreach \x/\y in {0/{ax},1/{x^{-1}b}} {
      \draw[-{Latex}] (2*\x,0) -- (2*\x+1.1,0);
      \draw ((2*\x+1,0) -- (2*\x+2,0);
      \draw (2*\x+1,0) +(0,-.4) node[draw=none,fill=none,color=black] {$\y$};
    }
    \draw (2,0) +(60:1.5) -- (2,0);
    \draw (2,0) +(120:1.5) -- (2,0);
  \end{pgfonlayer}
  \draw (2,-1.5) node[draw=none,fill=none,color=black] {$\D$};
  \draw (.7,1) node[draw=none,fill=none,color=black] {$\D'$};
  \draw (3.3,1) node[draw=none,fill=none,color=black] {$\D''$};
  \draw[-{Latex}, ultra thick,color=red] (6,0) -- (7,0);
  \begin{scope}[shift={(9,0)}]
    \foreach \x  in {0,...,2} {
      \draw (2*\x,0) node {};
    }
    \draw (2,2) node {};
    \begin{pgfonlayer}{background}
      \draw (0,0) +(240:1) -- (0,0);
      \draw (0,0) +(120:1) -- (0,0);
      \draw (4,0) +(60:1) -- (4,0);
      \draw (4,0) +(-60:1) -- (4,0);
      \foreach \x/\y in {0/{a},1/{b}} {
        \draw[-{Latex}] (2*\x,0) -- (2*\x+1.1,0);
        \draw ((2*\x+1,0) -- (2*\x+2,0);
        \draw (2*\x+1,0) +(0,-.4) node[draw=none,fill=none,color=black] {$\y$};
      }
      \draw (2,2) +(60:1.5) -- (2,2);
      \draw (2,2) +(120:1.5) -- (2,2);
      \draw[-{Latex}] (2,0) --(2,1.1);
      \draw (2,1) -- (2,2);
    \end{pgfonlayer}
    \draw (2,1) +(.4,0) node[draw=none,fill=none,color=black] {$x$};
    \draw (2,-1.5) node[draw=none,fill=none,color=black] {$\D$};
    \draw (.7,1) node[draw=none,fill=none,color=black] {$\D'$};
    \draw (3.3,1) node[draw=none,fill=none,color=black] {$\D''$};
  \end{scope}
\end{tikzpicture}
\end{center}

\caption{Free reduction of the boundary label of $\D$}\label{fig:xcancel}
\end{figure}

Note that the boundary label of $\cD$ is unaffected by such modifications. 
Both free reduction and shuffling of labels of $\cD$ result in a new diagram $\cD'$, which has regions in one to one correspondence
with the regions of $\cD$. 
If $\D_1$ and $\D_2$ are regions of $\cD$ with a common boundary component $\mu$ and boundary cycles $\mu\nu_1$ and $\mu\nu_2$, respectively,
such that
$\phi(\nu_1)=_F\phi(\nu_2)$ then
these modifications may be used  alter the diagram so that  $\phi(\nu_1)\equiv \phi(\nu_2)$; and $\D_1$ and $\D_2$ become
cancelling regions. 
 We say a diagram $\cD$ is \emph{strongly reduced} if it satisfies the condition that, whenever
$\D_1$ and $\D_2$ are distinct regions with a common boundary component $\mu$, of positive length, the label $\phi(\mu)$ is
a piece. From \cite[Theorem 11.5]{LS} we obtain the following. 
\begin{prop}
  \label{prop:Fcancel}
  Let $\cR$ be a subset of $F$ and $\widetilde{\cR}$ its symmetric closure. There exists a strongly reduced $\widetilde \cR$ diagram $\cD$ over
  $F$ with boundary label $w$ if and only if $w=1$ in $F/N$, where $N$ is the normal closure of $\cR$ in $F$. 
\end{prop}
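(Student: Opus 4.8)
The plan is to bootstrap from the classical van Kampen lemma for HNN-extensions, \cite[Theorem 11.5]{LS}, recalled on page~\pageref{p:diagram_prop}, by a minimal-counterexample argument. Note first that, since every element of $\widetilde\cR$ is a cyclically reduced conjugate of an element of $\cR\cup\cR^{-1}$, the normal closure of $\cR$ in $F$ coincides with the normal closure of $\widetilde\cR$, namely $N$; so we may work with $\widetilde\cR$ throughout. The forward implication is then immediate: a strongly reduced $\widetilde\cR$-diagram is in particular an $\widetilde\cR$-diagram over $F$, so if one exists with boundary label $w$ then $w=1$ in $F/N$ by the diagram property on page~\pageref{p:diagram_prop}.

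For the converse, suppose $w=1$ in $F/N$. By \cite[Theorem 11.5]{LS} there is a reduced $\widetilde\cR$-diagram over $F$ with boundary label $w$, so I would choose one, $\cD$, with the least possible number of regions, and claim it is strongly reduced. If not, there are distinct regions $\D_1,\D_2$ sharing a boundary component $\mu$ of positive length, with boundary cycles of the form $\mu\nu_1$ and $\mu\nu_2$, such that $p:=\phi(\mu)$ is not a piece. Reading around $\D_i$ from a suitable basepoint, condition~\ref{it:diag3} shows that $p\cdot\phi(\nu_i)$ is the boundary label of $\D_i$: it is a reduced product (a coarsening of a reduced product is again reduced) representing an element $r_i$ of $\widetilde\cR$. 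Thus $p$ occurs as the left factor of a reduced factorisation of each of $r_1$ and $r_2$; were $r_1\ne r_2$, this would exhibit $p$ as a piece, in the sense of Definition~\ref{defn:piece}, contrary to hypothesis. Hence $r_1=r_2$, and therefore $\phi(\nu_1)=_F\phi(\nu_2)$.

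Now I would invoke the label modifications set up immediately before the statement: a finite sequence of shuffles and free reductions of labels converts $\cD$ into a diagram with the same boundary label $w$ and the same number of regions in which $\phi(\nu_1)\equiv\phi(\nu_2)$, so that $\D_1$ and $\D_2$ form a cancelling pair. Performing the cancellation of Figure~\ref{fig:regioncancel} then produces an $\widetilde\cR$-diagram over $F$ with boundary label $w$ and strictly fewer regions, and passing from it to a reduced diagram can only decrease the region count further. This contradicts the minimality of $\cD$, so $\cD$ is strongly reduced, which completes the proof.

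I expect the only genuinely delicate point to be the passage from $\phi(\nu_1)=_F\phi(\nu_2)$ to $\phi(\nu_1)\equiv\phi(\nu_2)$, together with checking that the intermediate shuffles and free reductions preserve both the property of being an $\widetilde\cR$-diagram over $F$ — in particular that every region's boundary label remains a reduced product equal to an element of $\widetilde\cR$ — and the number of regions. This is, however, exactly what the modification and cancellation constructions introduced just before the Proposition are designed to provide, so it can be quoted rather than reproved; the rest is the standard minimal-diagram bookkeeping.
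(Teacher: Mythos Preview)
Your argument is correct and is essentially the same as the paper's: the paper simply cites \cite[Theorem~11.5]{LS} for the proposition, and the intended unpacking is exactly the minimal-region argument you give --- if $\phi(\mu)$ is not a piece then $\phi(\nu_1)=_F\phi(\nu_2)$, after which the shuffling and free-reduction modifications introduced just before the statement allow $\D_1$ and $\D_2$ to be cancelled, contradicting minimality. Your identification of the only delicate step (upgrading $=_F$ to $\equiv$ while preserving the diagram axioms and the region count) is also spot on, and that is precisely the purpose of the preceding discussion of label modifications.
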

\subsection{Reduction to the free product}\label{sec:redn}

In this section we specialise the methods of \cite{J} to the case in hand. 
As above, let $D$ be a set of double coset representatives of $U$ in $H$ satisfying the properties of Corollary \ref{lem:doublecoset}, 
and let $\s:H\maps D$ be the function mapping $h\in H$ to $d\in D$ such that $UhU=UdU$. 
Partition the set $D$ into disjoint subsets $D^+$, $D^-$ and $\{1_H\}$ such that $d\in D^+$ if and only if $d^{-1}\in D^-$. Now define the free product  
$F_D=\FF(D^+)\ast \la t\ra$, where $\FF(D^+)$ is the free group on $D^+$. 

The canonical map from $D^+$ to $\FF(D^+)$ extends to an injective map from $D$ to $\FF(D^+)$ by mapping $d^{-1}$ in $D^{-}$ to $d^{-1}\in \FF(D^+)$, for all $d\in D^+$, 
and mapping $1_H$ to the empty word. Composing this map with the canonical injection from $\FF(D^+)$ into $F_D$ we have an injective map
$\i$ from $D$ to $F_D$. The composition $\i\circ \s$ is then a map from $H$ to $F_D$, which we shall now also refer to as $\s$.
%
We extend $\s$ to a function from  $F$ to $F_D$ as follows. 
If $p\in F$ has reduced factorisation  $p=g_0t^{\e_1}\cdots t^{\e_n}g_n$ then we define 
\[\s(p)=\s(g_0)t^{\e_1}\cdots t^{\e_n}\s(g_n).\]
If $g,h\in H$ and $u\in U$ are such that $h=ug$ then $\s(g)=\s(h)$, so $\s$ is a well-defined map
from $F$ to $F_D$. 
To simplify notation we may write $\bar p$ for $\s(p)$.
\begin{defn}\label{def:troot} 
If $p\in F$ is such that $\s(p)$ is a not a proper power in $F_D$ then we say that $p$ is a $t$\emph{-root}. 
\end{defn}

Note that 
 a cyclically reduced element of $F$ of $t$-length at least $1$, which ends in a $t$ letter and  
which is  a $t$-root; cannot be a proper power in $F$. Indeed, 
suppose that  $p$ is  a cyclically reduced element of $F$ ending in a $t$-letter, with $|p|_t\ge 1$, 
and  $p=q^n$, for some element $q\in F$ and positive integer $n$. It follows, from \cite[Chapter IV, Section 2]{LS}, 
that $q$ is also cyclically reduced, of positive $t$-length and ends in a $t$-letter. 
Therefore $\s(p)=\s(q^n)=\s(q)^n$, so $p$ is not a $t$-root. (Note that $\s$ is not a group homomorphism.)

Let $n\ge 2$ be a positive integer and let $s$ be a 
cyclically reduced word of non-zero $t$-length, 
 $s=h_0t^{\e_1}\cdots h_{m-1}t^{\e_m}$,
where $h_i\in \GG_t$ and $\e_i\in\{\pm 1\}$, for $i=1,\ldots ,m-1$.  
Let $r=s^n$, let $N$ be the normal closure of $\cR=\{r\}$ in $\GG$  
and let $G=\GG/N$.
If $r_1$ and $r_2$ are 
elements of the symmetrised closure $\widetilde \cR$, of $\cR$, such that $\s(r_1)=\s(r_2)$ we say that $r_1$ and $r_2$ are in \emph{periodic position}.

\begin{lemma}\label{lem:perpospiece}
Assume $s$ has $t$-length $m\ge 1$, ends in  a $t$-letter, 
is cyclically $t$-thick and a cyclically reduced $t$-root. Let $r=s^n$, for some $n\ge 1$ and $\cR=\{r\}$.
If $r_1$ and $r_2$ are cyclic permutations of $r$ or $r^{-1}$ such that  $r_1=pq_1$ and $r_2=pq_2$  and $r_1$ and $r_2$ are in
periodic position, then $\phi(q_1)=_F\phi(q_2)$. 
\end{lemma}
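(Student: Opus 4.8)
The plan is to transport the whole configuration to the free group $F_D=\FF(D^+)\ast\la t\ra$ via the projection $\s$, use the hypothesis that $s$ is a cyclically reduced $t$-root to make the cyclic permutations of $\s(r)$ rigid, and then use cyclic $t$-thickness together with Lemma~\ref{lem:maln} to lift the resulting equality in $F_D$ back to an equality in $F$.

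First I would make some reductions. Since $s$ ends in a $t$-letter, for every $k\ge 1$ the reduced factorisation of $s^k$ is the concatenation of $k$ copies of that of $s$, so $\s(s^k)=\s(s)^k$; writing $\bar s=\s(s)$ this gives $\s(r)=\bar s^{\,n}$, and as $s$ is a $t$-root, $\bar s$ is not a proper power in $F_D$. Moreover $F_D=\FF(D^+)\ast\la t\ra$ is itself a free group, so no non-trivial element is conjugate to its inverse, whence $\bar s^{\,n}$ is not conjugate to $\bar s^{-n}$. Since the $\s$-image of a cyclic permutation of $r$ (respectively of $r^{-1}$) is conjugate in $F_D$ to $\bar s^{\,n}$ (respectively $\bar s^{-n}$), the hypothesis $\s(r_1)=\s(r_2)$ forces $r_1$ and $r_2$ to be cyclic permutations of the same one of $r$, $r^{-1}$; after replacing $r$ by $r^{-1}$ if necessary, assume both are cyclic permutations of $r$. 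Next, by moving any terminal $H$-syllable of $p$ into both $q_1$ and $q_2$ (a reduced re-factorisation that does not change whether $\phi(q_1)=_F\phi(q_2)$), assume $p$ ends in a $t$-letter or $p=1$. Then $\s(r_i)=\s(p)\,\s(q_i)$, and cancelling $\s(p)$ in the free group $F_D$ yields $\s(q_1)=\s(q_2)$.

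It remains to deduce $q_1=_F q_2$ from $\s(q_1)=\s(q_2)$. Comparing reduced factorisations of $q_1$ and $q_2$, the equality $\s(q_1)=\s(q_2)$ matches their $t$-exponents and forces the corresponding $H$-syllables into equal $U$-double cosets, so it is enough to upgrade $Ug_iU=Ug_i'U$ to $g_i=_F g_i'$. The $t$-root hypothesis again does the structural work: because $\bar s$ is not a proper power, $\s(r_1)=\s(r_2)$ pins down the "phases" of $r_1$ and $r_2$ as cyclic permutations of $s^n$, so that corresponding $H$-syllables arise from the same positions of $s^n$; and because $s$ is cyclically $t$-thick, those $H$-syllables lie in $U\cup\maln_{\GG_t}(U)$ (using $t$-thickness for the interior syllables and $g_kg_0\in U\cup\maln_{\GG_t}(U)$ from Definition~\ref{def:tthick} for the syllable wrapping the start of the relator). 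Walking along $q_1$ and $q_2$ from the common $t$-terminal prefix $p$ and applying Lemma~\ref{lem:maln} repeatedly --- at each step two $\maln$-type elements in a common double coset, with the $U$-discrepancy carried forward through the intervening $t$-letters, which commute with $U$ --- forces all discrepancies to vanish, so $q_1=_F q_2$. (When $r_1$ and $r_2$ are aligned at syllable boundaries of $s^n$ this last part is immediate: equal phases give $r_1\equiv r_2$ as words, hence $q_1\equiv q_2$.)

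The step I expect to be the main obstacle is this last one, lifting $\s(q_1)=\s(q_2)$ back to $q_1=_F q_2$. The map $\s$ discards precisely the information of which $U$-double-coset representative occurs in each $H$-syllable, so for arbitrary reduced HNN-words with equal $\s$-images the implication fails; what rescues it here is the interaction of (i) rigidity --- because $\bar s$ is not a proper power the $\s$-images force $r_1$ and $r_2$ to read off $H$-syllables from identical positions of $s^n$ --- and (ii) cyclic $t$-thickness --- each such syllable lies in $U\cup\maln_{\GG_t}(U)$, where Lemma~\ref{lem:maln} prohibits exactly the $U$-sliding that $\s$ would otherwise hide. The genuinely fiddly part is the bookkeeping for $H$-syllables of $s^n$ that a reduced factorisation cuts into two, whose fragments are proper divisors of elements of $\maln_{\GG_t}(U)$; this is the reason the reduction to a $t$-terminal common prefix $p$ is worth making at the start.
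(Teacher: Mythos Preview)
Your core argument is correct and in fact contains the paper's proof as a special case, but you take a detour the paper avoids. After ruling out the $r^{-1}$ case (exactly as you do, via non-conjugacy of a nontrivial element to its inverse in the free group $F_D$), the paper simply observes that $\s(r_1)$ and $\s(r_2)$ are both cyclic permutations of $\bar s^{\,n}$; since $\bar s$ is not a proper power, two equal cyclic permutations of $\bar s^{\,n}$ differ by a shift that is a multiple of the length of $\bar s$, so the corresponding cyclic permutations $r_1,r_2$ of $s^n$ differ by a multiple of the length of $s$ and hence are \emph{identical} --- giving $q_1=q_2$ at once. There is no separate lifting step from $F_D$ back to $F$, and neither cyclic $t$-thickness nor Lemma~\ref{lem:maln} is used; those hypotheses appear in the statement only because they are carried forward to later results. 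Your own parenthetical ``equal phases give $r_1\equiv r_2$ as words, hence $q_1\equiv q_2$'' is exactly the paper's argument, so you already have the short proof in hand. The walking-with-Lemma~\ref{lem:maln} machinery you build is aimed at cyclic permutations that cut an $H$-syllable, but in the HNN framework of the paper cyclic permutations of $r$ are taken at syllable boundaries (this is the sense used throughout Section~\ref{sec:diag} and in Collins' Lemma), so that case does not arise. It is also worth noting that the bare implication $\s(q_1)=\s(q_2)\Rightarrow q_1=_F q_2$ fails even for $t$-thick words (e.g.\ $q_1=dt$, $q_2=udt$ with $1\ne u\in U$ and $d\in\maln_H(U)$), so any lifting argument would have to rest on the phase rigidity anyway --- and once you have that, there is nothing left to lift.
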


\begin{proof}
We may assume that $r_1=r$ and $r_2$ is obtained from $r^{\e}$, where $\e=\pm 1$, by cyclic permutation. 
 If $\e=-1$ then $\s(r_2)$ is a cyclic permutation of $\s(r_1^{-1})=\s(r_1)^{-1}$ and $\s(r_1)=\s(r_2)$ in $F_D$. This implies
$\s(r_1)=1$, so $r_1\in U$,  a contradiction. Therefore $\e=1$ and as 
$s$ is  a $t$-root it follows that $r_2$ is obtained from $r$ by a cyclic permutation of length $k|s|=2km$, for some positive integer $k$.
Hence $r_2=_Fr$ and so $\phi(q_1)=_F\phi(q_2)$.
\end{proof}
\begin{corollary}\label{cor:noperpos}
  If $M$ is a strongly reduced diagram and $\mu$ is the common boundary component of regions $\D_1$ and $\D_2$, with
  boundary cycles $\rho_1=\mu\nu_1$ and $\rho_2=\mu\nu_2$, respectively, then $\phi(\rho_1)$ and $\phi(\rho_2)$ 
  are not in periodic position. In particular, $\phi(\mu)$ is a piece. 
\end{corollary}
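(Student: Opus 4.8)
The plan is to argue by contradiction, pushing everything back to Lemma \ref{lem:perpospiece}. Suppose $\phi(\rho_1)$ and $\phi(\rho_2)$ are in periodic position and set $p=\phi(\mu)$. By condition \ref{it:diag3'} each $\phi(\rho_i)$ is a reduced factorisation equal in $F$ to a cyclic permutation of $r$ or $r^{-1}$, so $\phi(\rho_i)\in\widetilde\cR$; and since the boundary cycle $\rho_i$ factors as $\mu\nu_i$, the product $\phi(\rho_i)=p\cdot\phi(\nu_i)$ is reduced. So Lemma \ref{lem:perpospiece} (under the hypotheses on $s$ assumed there) applies to the factorisations $\phi(\rho_1)=p\cdot\phi(\nu_1)$ and $\phi(\rho_2)=p\cdot\phi(\nu_2)$, and gives $\phi(\nu_1)=_F\phi(\nu_2)$.

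Next I would turn this equality into a contradiction. The point is that $\phi(\nu_1)=_F\phi(\nu_2)$ exhibits $\D_1$ and $\D_2$ as a redundant pair of regions in the sense discussed in Section \ref{sec:diag}: by a finite sequence of label shuffles (Figure \ref{fig:xyshuffle}) and free reductions (Figure \ref{fig:xcancel}) along $\nu_1$ and $\nu_2$ one modifies $M$ --- leaving $\phi(\mu)$, the boundary label of $M$, and the number of regions unchanged --- until $\phi(\nu_1)\equiv\phi(\nu_2)$ as words; then $\D_1$ and $\D_2$ become a cancelling pair and cancelling them produces a diagram with the same boundary label but strictly fewer regions. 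This contradicts the standing restriction to diagrams admitting no such cancellation (in particular $M$ being reduced). Hence $\phi(\rho_1)$ and $\phi(\rho_2)$ are not in periodic position.

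For the last assertion I would argue directly: if $\phi(\mu)$ were not a piece then, since $\phi(\rho_1)=p\cdot\phi(\nu_1)$ and $\phi(\rho_2)=p\cdot\phi(\nu_2)$ are reduced products with both $\phi(\rho_1),\phi(\rho_2)\in\widetilde\cR$, Definition \ref{defn:piece} would force $\phi(\rho_1)=\phi(\rho_2)$ in $F$; but then $\s(\phi(\rho_1))=\s(\phi(\rho_2))$ because $\s$ is a well-defined function on $F$, so $\phi(\rho_1)$ and $\phi(\rho_2)$ would be in periodic position, contrary to what was just shown. (This is in any case immediate from the definition of a strongly reduced diagram, as $\D_1\neq\D_2$ share the boundary component $\mu$ of positive length.)

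I expect the main obstacle to be the bookkeeping in the second step: verifying that the shuffle and free-reduction moves can indeed be carried out along the common boundary so as to replace $\phi(\nu_1)=_F\phi(\nu_2)$ by a literal equality of words while leaving $p=\phi(\mu)$ and $\partial M$ untouched, and making precise the sense in which a (strongly) reduced diagram contains no such redundant pair, so that the resulting region cancellation is a genuine contradiction. The appeal to Lemma \ref{lem:perpospiece} and the piece argument of the third step are then routine.
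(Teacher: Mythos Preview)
Your argument is correct and uses the same two ingredients as the paper (the definition of strongly reduced and Lemma~\ref{lem:perpospiece}), but you run them in the reverse order and through an unnecessary detour. The paper's proof is two lines: the definition of strongly reduced gives immediately that $\phi(\mu)$ is a piece; then, if $\phi(\rho_1)$ and $\phi(\rho_2)$ were in periodic position, Lemma~\ref{lem:perpospiece} together with condition~\ref{it:diag3'} would force $\phi(\nu_1)=_F\phi(\nu_2)$, contradicting the piece condition (which in the diagram context is witnessed precisely by the distinctness of $\phi(\rho_1)$ and $\phi(\rho_2)$). Your detour through the shuffle and free-reduction moves and the region-cancellation procedure is redundant here: that machinery is what establishes Proposition~\ref{prop:Fcancel} (existence of strongly reduced diagrams); once you \emph{have} a strongly reduced $M$, the definition does the work directly, and the bookkeeping obstacle you flagged in your final paragraph simply does not arise. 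Your own parenthetical remark at the end --- that the piece conclusion is in any case immediate from the definition of strongly reduced --- is exactly the shortcut the paper takes, and starting from it rather than ending with it collapses your three-step argument to one line.
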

\begin{proof}
  As $M$ is strongly reduced $\phi(\mu)$ is a piece, so from the previous lemma and condition \ref{it:diag3'} for diagrams,
  $\phi(\rho_1)$ and $\phi(\rho_2)$ 
  are not in periodic position.
\end{proof}
For the remainder of this section assume that $s$ and $r$ are elements
of $F$ satisfying the hypotheses of Lemma \ref{lem:perpospiece}. 
Let $\FF(X)$ be the free group on a basis $X$ and let  $w$ be an an element of $\FF(X)$, written as a reduced word.  
We say a cyclic subword $a$ of $w$ is
\emph{uniquely positioned} if no other cyclic subword of $w$ or $w^{-1}$ is equal to $a$. 
As $\bar s$ is cyclically reduced and not a proper power in $F_D$, it follows from \cite[Theorem 2.1]{DH} that $\bar s$  
has a cyclic permutation with reduced factorisation equal to
$\bar a\bar b$, where $\bar a$ and $\bar b$ are non-empty uniquely positioned subwords of $\bar s$. Therefore $s$ has a cyclic permutation
$\tilde s$, with a reduced factorisation $\tilde s=ab$, such that $a$ and $b$ are integral cyclic subwords of $s$, $\s(a)=\bar a$ and $\s(b)=\bar b$.
In this definition, the words $a$ and $b$ are not necessarily uniquely determined. Suppose $s$ has a cyclic permutation $\tilde s$ which factors
as $\tilde s=w_0t^\e ut^\e w_1$, where $u\in U$, then $\sigma(\tilde s)=\bar w_0t^\e t^\e\bar w_1$. If, in this expression,
$\bar a$ ends with the first occurrence of $t^\e$
and $\bar b$ begins with the second, then $a$ may be chosen to end $t^\e u$ or $t^\e$, with $b$ beginning $t^\e$ or $ut^\e$, respectively. To
avoid such ambiguity, in this situation, we always choose $a$ to be right integral, that is $a$ ends in $t^\e$.
Similarly, if this situation arises with the roles of $a$ and $b$ interchanged, we choose $b$ to be
right integral. With this convention, the words $a$ and $b$ are uniquely determined cyclic subwords of $s$.  

Now let $\D$ be a region, of an $\widetilde \cR$ diagram over $F$,
with boundary cycle $\rho$, such that $\phi(\rho)=r_0\in \widetilde \cR$. By changing the 
base point of $\rho$, if necessary, we may assume that $r_0$ has reduced factorisation 
$r_0=(g_0t^{\e_1}\cdots g_{m-1}t^{\e_m})^n$, where $s_0=g_0t^{\e_1}\cdots g_{m-1}t^{\e_m}$ is a cyclic permutation of $s$ or $s^{-1}$. 
The primary vertices of $\D$ form a subsequence $\d_1,\g_1\ldots, \d_{mn},\g_{mn}$ of the vertex sequence 
of $\rho$, such that, setting $\g_0=\g_{mn}$,
$\phi([\d_{i+km},\g_{i+km}])=t^{\e_i}$ and $\phi([\g_{i-1+km},\d_{i+km}])=g_{i-1}$, 
 for 
$1\le i\le m$ and $0\le k\le n-1$ (where $[x,y]$ denotes the subpath of $\rho$ from vertex $x$ to vertex $y$). 
In this notation, if $g_i=1$ then the primary vertices $\g_{i+km}$ and $\d_{i+1+km}$ are the same.

Let $\overline{\phi}=\s\circ \phi$, defined on subintervals of $\pd \D$ beginning and ending
at primary vertices; 
so 
\[\overline{\phi}([\g_{i-1+km},\d_{i+km}])=\bar g_{i-1},\]
and  
\[
  \overline{\phi}([\g_{km},\g_{(k+1)m}])=\overline{s}_0 =\bar g_0t^{\e_1}\cdots \bar g_{m-1}t^{\e_m}\in F_D,\]
  for $0\le k\le n-1$. 
  From the above, $s_0$ has a cyclic permutation which factorises
  as $ab$, where $a$ and $b$ are integral cyclic subwords of $s$, and  
 $\overline{s}_0$ has a corresponding cyclic permutation which factorises as $\bar a\bar b$, where $\bar a$, $\bar b$
 are uniquely positioned  cyclic subwords of a cyclic permutation of $\bar s$. 
Consequently, $\bar r_0=(\bar s_0)^n$ has a cyclic permutation which factorises as $(\bar a\bar b)^n$. 
  Therefore there is a subsequence   $\a_1,\b_1, \ldots \a_n,\b_n$ of the sequence   $\d_1,\g_1,\ldots, \d_{mn},\g_{mn}$  of primary 
vertices of $\D$,     
such that $\phi([\a_i,\b_i])=a$ and  $\phi([\b_i,\a_{i+1}])=b$, and 
  $\bar\phi([\a_i,\b_i])=\bar a$ and  $\bar \phi([\b_i,\a_{i+1}])=\bar b$, for $i=1,\ldots ,n$; 
  as illustrated in Example \ref{ex:sepD}. 
\begin{example}\label{ex:sepD}
  Figure \ref{fig:sepD} illustrates a possible distribution of the first four of the 
vertices $\a_i$, $\b_i$ on $\pd \D$, assuming that 
 $s_0=g_0t g_1t^{-2}$, $g_0=g'_0g_0''$, $g_1=g_1'g_1''$, $\bar a=t\bar g_1 t^{-1}$ and $\bar b=t^{-1}\bar g_0$. In the diagram, primary vertices
are those  with the larger diameter. 
  \begin{figure}
\begin{center}

\begin{tikzpicture}[scale=.75,arrowmark/.style 2 args={decoration={markings,mark=at position #1 with \arrow{#2}}}]%
  \tikzstyle{every node}=[circle, draw, fill=blue, color=blue,
  inner sep=0pt, minimum width=6pt]
\pgfmathsetmacro{\l}{1.5}
\draw (\l*0,0) node[color=black] {};
\draw (\l*1,0) node {};
\draw (\l*2,0) node[color=black] {};
\draw (\l*3,0) node[color=black, minimum width=3pt] {};
\draw (\l*4,0) node[color=black] {};
\draw (\l*5,0) node[color=red] {};
\draw (\l*6,0) node[color=black] {};
\draw (\l*7,0) node[color=black, minimum width=3pt] {};
\draw (\l*8,0) node {};
\draw (\l*9,0) node[color=black] {};
\draw (\l*10,0) node[color=black] {};
\draw (\l*11,0) node[color=red] {};
\draw (\l*12,0) node[color=black] {};
\draw (\l*0,0) +(0.75,0.4) node[draw=none,fill=none,color=black] {$g_0$};
\draw (\l*2,0) +(-0.65,0.4)  node[draw=none,fill=none,color=black] {$t$};
\draw (\l*3,0)  +(-0.65,0.4) node[draw=none,fill=none,color=black] {$g'_1$};
\draw (\l*4,0) +(-0.65,0.4)  node[draw=none,fill=none,color=black] {$g''_1$};
\draw (\l*5,0) +(-0.65,0.4)  node[draw=none,fill=none,color=black] {$t$};
\draw (\l*6,0) +(-0.65,0.4)  node[draw=none,fill=none,color=black] {$t$};
\draw (\l*7,0) +(-0.65,0.4)  node[draw=none,fill=none,color=black] {$g'_0$};
\draw (\l*8,0) +(-0.65,0.4)  node[draw=none,fill=none,color=black] {$g''_0$};
\draw (\l*9,0) +(-0.65,0.4)  node[draw=none,fill=none,color=black] {$t$};
\draw (\l*10,0) +(-0.65,0.4) node[draw=none,fill=none,color=black] {$g_1$};
\draw (\l*11,0) +(-0.65,0.4) node[draw=none,fill=none,color=black] {$t$};
\draw (\l*12,0) +(-0.65,0.4) node[draw=none,fill=none,color=black] {$t$};
\draw (\l*1,0) +(0,-0.4*\l) node[draw=none,fill=none,color=black] {$\a_1$};
\draw (\l*8,0) +(0,-0.4*\l) node[draw=none,fill=none,color=black] {$\a_2$};
\draw (\l*5,0) +(0,-0.4*\l) node[draw=none,fill=none,color=black] {$\b_1$};
\draw (\l*11,0) +(0,-0.4*\l) node[draw=none,fill=none,color=black] {$\b_2$};
\draw (\l*0,0) +(120:0.5*\l) -- (\l*0,0);
\draw (\l*0,0) +(240:0.5*\l) -- (\l*0,0);
\draw (\l*3,0) +(120:0.5*\l) -- (\l*3,0);
\draw (\l*3,0) +(60:0.5*\l) -- (\l*3,0);
\draw (\l*7,0) +(120:0.5*\l) -- (\l*7,0);
\draw (\l*8,0) +(60:0.5*\l) -- (\l*8,0);
\draw (\l*12,0) +(60:0.5*\l) -- (\l*12,0);
\draw (\l*12,0) +(-60:0.5*\l) -- (\l*12,0);
\begin{pgfonlayer}{background}
  \foreach \x  in {0,1,2,3,6,7,8,9} {
    \draw[-{Latex}] (\l*\x,0) -- (\l*\x+\l*0.65,0);
    \draw (\l*\x+\l*0.5,0) -- (\l*\x+\l*1,0);
  }
\foreach \x  in {4,5,10,11} {
    \draw[-{Latex}] (\l*\x+\l,0) -- (\l*\x+\l*0.35,0);
    \draw (\l*\x+\l*0.4,0) -- (\l*\x,0);
  }
\draw[{Bar[width=0pt 6]}-{Latex}] (0,+.8*\l) -- (6*\l,+0.8*\l);
\draw[{Bar[width=0pt 6]}-{Latex}] (6*\l+.05,+.8*\l) -- (12*\l,+0.8*\l);
\draw[{Bar[width=0pt 6]}-{Latex}] (1*\l,-.8*\l) -- (5*\l,-0.8*\l);
\draw[{Bar[width=0pt 6]}-{Latex}] (5*\l+.05,-.8*\l) -- (8*\l,-0.8*\l);
\draw[{Bar[width=0pt 6]}-{Latex}] (8*\l+.05,-.8*\l) -- (11*\l,-0.8*\l);
\end{pgfonlayer}
\draw (\l*3,+1.2*\l) node[draw=none,fill=none,color=black] {$s_0$};
\draw (\l*9,+1.2*\l) node[draw=none,fill=none,color=black] {$s_0$};
\draw (\l*6,-1.6*\l) node[draw=none,fill=none,color=black] {$\D$};
\draw (\l*3,-.6*\l) node[draw=none,fill=none,color=black] {$a$};
\draw (\l*9.5,-.6*\l) node[draw=none,fill=none,color=black] {$a$};
\draw (\l*6.5,-.6*\l) node[draw=none,fill=none,color=black] {$b$};
  \draw[{Latex}-{Latex}, ultra thick,color=black]  (6*\l,-3.25*\l) -- (6*\l,-2.5*\l);
\begin{scope}[shift={(0,-5.25*\l)}]
\draw (\l*0,0) node[color=black] {};
\draw (\l*1,0) node {};
\draw (\l*2,0) node[color=black] {};
\draw (\l*3,0) node[color=black, minimum width=3pt] {};
\draw (\l*4,0) node[color=black] {};
\draw (\l*5,0) node[color=red] {};
\draw (\l*6,0) node[color=black] {};
\draw (\l*7,0) node[color=black, minimum width=3pt] {};
\draw (\l*8,0) node {};
\draw (\l*9,0) node[color=black] {};
\draw (\l*10,0) node[color=black] {};
\draw (\l*11,0) node[color=red] {};
\draw (\l*12,0) node[color=black] {};
\draw (\l*0,0) +(0.75,0.4) node[draw=none,fill=none,color=black] {$\bar g_0$};
\draw (\l*2,0) +(-0.65,0.4)  node[draw=none,fill=none,color=black] {$t$};
\draw (\l*3,0)  +(0,1) node[draw=none,fill=none,color=black] {$\bar g_1$};
\draw (\l*7,0)  +(0,1) node[draw=none,fill=none,color=black] {$\bar g_0$};
\draw[{Bar[width=0pt 6]}-{Latex}] (2*\l,+.4*\l) -- (4*\l,+0.4*\l);
\draw[{Bar[width=0pt 6]}-{Latex}] (6*\l,+.4*\l) -- (8*\l,+0.4*\l);
\draw (\l*5,0) +(-0.65,0.4)  node[draw=none,fill=none,color=black] {$t$};
\draw (\l*6,0) +(-0.65,0.4)  node[draw=none,fill=none,color=black] {$t$};
\draw (\l*9,0) +(-0.65,0.4)  node[draw=none,fill=none,color=black] {$t$};
\draw (\l*10,0) +(-0.65,0.4) node[draw=none,fill=none,color=black] {$\bar g_1$};
\draw (\l*11,0) +(-0.65,0.4) node[draw=none,fill=none,color=black] {$t$};
\draw (\l*12,0) +(-0.65,0.4) node[draw=none,fill=none,color=black] {$t$};
\draw (\l*1,0) +(0,-0.4*\l) node[draw=none,fill=none,color=black] {$\a_1$};
\draw (\l*8,0) +(0,-0.4*\l) node[draw=none,fill=none,color=black] {$\a_2$};
\draw (\l*5,0) +(0,-0.4*\l) node[draw=none,fill=none,color=black] {$\b_1$};
\draw (\l*11,0) +(0,-0.4*\l) node[draw=none,fill=none,color=black] {$\b_2$};
\draw (\l*0,0) +(120:0.5*\l) -- (\l*0,0);
\draw (\l*0,0) +(240:0.5*\l) -- (\l*0,0);
\draw (\l*12,0) +(60:0.5*\l) -- (\l*12,0);
\draw (\l*12,0) +(-60:0.5*\l) -- (\l*12,0);
\begin{pgfonlayer}{background}
  \foreach \x  in {0,1,2,3,6,7,8,9} {
    \draw[-{Latex}] (\l*\x,0) -- (\l*\x+\l*0.65,0);
    \draw (\l*\x+\l*0.5,0) -- (\l*\x+\l*1,0);
  }
\foreach \x  in {4,5,10,11} {
    \draw[-{Latex}] (\l*\x+\l,0) -- (\l*\x+\l*0.35,0);
    \draw (\l*\x+\l*0.4,0) -- (\l*\x,0);
  }
\draw[{Bar[width=0pt 6]}-{Latex}] (1*\l,-.8*\l) -- (5*\l,-0.8*\l);
\draw[{Bar[width=0pt 6]}-{Latex}] (5*\l+.05,-.8*\l) -- (8*\l,-0.8*\l);
\draw[{Bar[width=0pt 6]}-{Latex}] (8*\l+.05,-.8*\l) -- (11*\l,-0.8*\l);
\end{pgfonlayer}
\draw (\l*6,-1.6*\l) node[draw=none,fill=none,color=black] {$\D$};
\draw (\l*3,-.6*\l) node[draw=none,fill=none,color=black] {$\bar a$};
\draw (\l*9.5,-.6*\l) node[draw=none,fill=none,color=black] {$\bar a$};
\draw (\l*6.5,-.6*\l) node[draw=none,fill=none,color=black] {$\bar b$};
\end{scope}

\end{tikzpicture}
\end{center}
    \caption{Labelling with $\bar\phi$ to define $\sep(\D)$}\label{fig:sepD}
  \end{figure}
\end{example}
 We define $\sep(\D)$ to be 
\[\sep(\D)=\{\a_i,\b_i\,:\, 1\le i\le n\}.\]

 If $\mu$ is a simple cyclic sub-path of $\rho$ then we may assume that 
$\mu$ is a path from points $\a$ to $\b$, on $\pd \D$ (when read with the same orientation as $\rho$).  We define 
 \[\sep(\mu)=\sep(\D)\cap [\mu\backslash \{\b\}].\] 
(The terminal point of $\mu$ never contributes to $\sep(\mu)$.) 
Thus, if $\rho$ has a cyclic subpath $\xi$ with  decomposition $\xi=\mu\nu$, where $\mu$ and  $\nu$ have 
 disjoint interiors, then $|\sep(\xi)|=|\sep(\mu)|+|\sep(\nu)|$.
\begin{lemma}\label{lem:piecesep}
Let $s$ and $r$ be as in Lemma \ref{lem:perpospiece}. 
 Assume $\D$ is a region of a strongly reduced $\widetilde \cR$ diagram over $F$, which has  boundary cycle $\rho$, with a  sub-interval $\mu$ 
 such that 
  $\phi(\mu)=p$, where $p$ is a piece over $\widetilde \cR$. 
 Then 
 $|\sep(\mu)|\le 1$. 
\end{lemma}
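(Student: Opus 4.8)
The plan is to argue by contradiction. Assume $|\sep(\mu)|\ge 2$; I will produce two distinct elements of $\widetilde\cR$ that are equal in $F$, which is absurd.

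First I would record two consequences of the hypotheses. Since $p=\phi(\mu)$ is a piece, Definition \ref{defn:piece} supplies distinct $r_1,r_2\in\widetilde\cR$ with reduced factorisations $r_i=_Fpv_i$, so $p$ is a common left divisor of $r_1$ and $r_2$; moreover $\s$ agrees with an honest concatenation on reduced products except for the possible modification of a single trailing $H$-letter, so $\s(r_1)$ and $\s(r_2)$ agree with $\s(p)$, and hence with each other, on every letter of $\s(p)$ except possibly its last. On the other hand, from the construction of $\sep(\D)$, any two consecutive points of the sequence $\a_1,\b_1,\dots,\a_n,\b_n$ bound a subinterval of $\rho$ with $\phi$-label $a$ or $b$, hence $\overline\phi$-label $\bar a$ or $\bar b$. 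As $\sep(\mu)=\sep(\D)\cap[\mu\backslash\{\b\}]$ has at least two elements, $\mu$ contains such a subinterval $\nu$, and since the terminal vertex $\b$ of $\mu$ is excluded from $\sep(\mu)$, the interval $\nu$ ends strictly before the end of $\mu$. Hence the copy of $\bar a$ or $\bar b$ contributed by $\nu$ occurs inside $\s(p)$ and ends strictly before its last letter, so it occurs at the same position in $\s(r_1)$ and in $\s(r_2)$.

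Next I would invoke unique positioning. Recall that $\bar a,\bar b$ are uniquely positioned cyclic subwords of a cyclic permutation of $\bar s$, and $\bar s$ is not a proper power in $F_D$; therefore neither $\bar a$ nor $\bar b$ occurs as a cyclic subword of $\bar s^{-1}$, and any occurrence of $\bar a$ (or $\bar b$) in the cyclic word $\bar s^{\,n}$ starts at a position congruent modulo $|\bar s|$ to its unique starting position in $\bar s$. Since $\s(r_1)$ and $\s(r_2)$ are cyclic permutations of $\bar s^{\,\pm n}$, the common occurrence of $\bar a$ or $\bar b$ found above forces both to be cyclic permutations of $\bar s^{\,n}$ (the exponent $-n$ being excluded) of the same phase modulo $|\bar s|$, and therefore $\s(r_1)=\s(r_2)$. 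Thus $r_1$ and $r_2$ are in periodic position, and Lemma \ref{lem:perpospiece} (after the routine reduction, via Collins' Lemma, to the case where $r_1,r_2$ are cyclic permutations of $r^{\pm1}$) gives $\phi(v_1)=_F\phi(v_2)$, whence $r_1=_Fr_2$. As $r_1,r_2$ are distinct elements of $\widetilde\cR$, this is the desired contradiction, and so $|\sep(\mu)|\le1$.

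The main obstacle will be the word-combinatorics underlying the last two paragraphs: precisely controlling how far $\s(p)$ survives as a genuine common prefix of $\s(r_1)$ and $\s(r_2)$ (that only the trailing $H$-letter can be disturbed by reduction in $r_i=pv_i$), and establishing the elementary but delicate statement that a uniquely positioned factor of a non-proper-power $\bar s$ has all its occurrences in $\bar s^{\,n}$ aligned modulo $|\bar s|$, so that two cyclic permutations of $\bar s^{\,n}$ of equal phase coincide. The reduction via Collins' Lemma from cyclically reduced conjugates to literal cyclic permutations, needed to apply Lemma \ref{lem:perpospiece} verbatim, is a further (standard) technicality.
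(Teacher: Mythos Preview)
Your argument is essentially the same as the paper's: both pass to $F_D$ via $\sigma$, exploit the uniquely positioned factorisation $\bar a\bar b$ of $\bar s$, and use Lemma~\ref{lem:perpospiece} to reach a contradiction. The organizational difference is that the paper first replaces $p$ by its \emph{maximal integral subword} $p_0$ (the part of $p$ from the first $t$-letter to the last), so that $p_0$ begins and ends with $t$-letters; this makes $\sigma(p_0)$ an honest common prefix of $\sigma(r_1)$ and $\sigma(r_2)$ with no endpoint ambiguity, and the argument that $\bar a,\bar b$ cannot lie inside a piece of $\sigma(\widetilde\cR)$ becomes a clean one-liner. Your version tracks the ``last $H$-letter'' ambiguity by hand, which is workable but messier; in particular the claim that $\sigma(r_1),\sigma(r_2)$ are literal cyclic permutations of $\bar s^{\pm n}$ is not quite right when $r_i$ fails to start and end at primary vertices (the first or last $H$-syllable may be split), and this is exactly what the $p_0$ device is designed to avoid.

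Your flagged ``routine reduction via Collins' Lemma'' is indeed the delicate point. From $\sigma(r_1)=\sigma(r_2)$ one gets cyclic permutations $r_1',r_2'$ of $r^{\pm 1}$ with $\sigma(r_i')=\sigma(r_i)$ and then $r_1'=r_2'$; but $r_i=c_i^{-1}r_i'c_i$ with $c_i\in U$, and one still has to argue $c_1=c_2$. This does follow, because the common prefix $p$ forces the \emph{first} $H$-syllable of $r_1$ and $r_2$ to coincide, and (once $p_0$ is used so that $r_i$ begins with a $t$-letter, or at least at a primary vertex) this pins down $c_i$ uniquely. So the gap is closable, but it is not as routine as your sketch suggests; the paper's $p_0$ formulation absorbs this step more gracefully.
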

\begin{proof}
  We may assume that  $p\notin H$ and $p\notin\{t^{\pm 1}\}$, since otherwise the result holds immediately.
Let $p_0$ be the maximal integral subword of $p$ (which must be non-empty under this assumption).
Then $p_0$ is a piece over $\widetilde \cR$, and as the diagram is strongly reduced, $p_0$ is not in periodic position,
so $\bar p_0$ is a piece over the symmetrised closure of $\s(\widetilde \cR)$. 
As $\bar a$ and $\bar b$ are
uniquely positioned subwords of $\bar s$, neither can be a subword of $\bar p_0$. 
As $a$ and  $b$ are integral subwords of the boundary label of $\D$,  by definition of $p_0$, if  $a$ or $b$ is a subword of
 $p$ then it is a subword of $p_0$; so  $\bar a$ or $\bar b$  is a subword of $\bar p_0$, a contradiction. 
Thus neither $a$ or $b$ is a subword of $p$, and the Lemma follows.
\end{proof}
Following \cite{mccawise} we make the following definition.
\begin{defn}\label{def:shell}
  Let  $\D$ be a region of a diagram $M$ over $\widetilde \cR$
   and suppose that $\D$ has a boundary cycle that 
  factors as $\mu_1\cdots \mu_k\b$, where $\mu_i$ is the common boundary component of
  $\D$ and a region $\D_i$, and $\b$ is a component of a boundary cycle of $M$. 
  Then $\D$ is called a $k$-shell, with boundary component $\b$. 
\end{defn}

\begin{prop}\label{prop:sc}
  Let $s$, $r=s^n$ and $\cR$ be as in Lemma \ref{lem:perpospiece}.
     \be[label=(\alph*)]
\item\label{it:sc1}
  If 
   $\D$ is a  $k$-shell
  of a strongly reduced
   diagram over $\widetilde \cR$, 
     with boundary component  $\b$ then 
  \begin{equation}\label{eq:dshlth} l(\phi(\b))
    \ge \begin{cases}
      \left(n-  \frac{k+2}{2}\right)l(s)+2,\textrm{ if $k$ is even}\\
      \left(n-\frac{k+1}{2}\right)l(s)+1, \textrm{ if $k$ is odd}
    \end{cases}
    .
\end{equation}
\item\label{it:sc2}  $\widetilde \cR$ satisfies
  small cancellation condition $C(2n)$.
  \ee
\end{prop}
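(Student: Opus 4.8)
The plan is to read both parts off the combinatorics of $\sep(\D)$. Recall that for any region $\D$ of an $\widetilde\cR$ diagram whose boundary cycle $\rho$ is labelled by an element of $\widetilde\cR$, written in its natural form $(s_0)^n$ with $s_0$ a cyclic permutation of $s^{\pm1}$, the set $\sep(\D)$ consists of exactly $2n$ points (the $\a_i,\b_i$), dividing $\rho$ into $2n$ consecutive arcs labelled alternately by the words $a$ and $b$, which are non-empty (hence of length $\ge1$) and satisfy $l(a)+l(b)=l(s)$, using $l(\phi(\rho))=l(r)=nl(s)$. I will use two elementary consequences. First, $|\sep(\cdot)|$ is additive over any subdivision of $\rho$ into arcs with pairwise disjoint interiors (iterating the additivity noted after the definition of $\sep$), and by Lemma~\ref{lem:piecesep} any sub-arc whose label is a piece over $\widetilde\cR$ contributes at most $1$ to this count. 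Second, a connected sub-arc $\nu$ of $\rho$ with $|\sep(\nu)|=j\ge1$ satisfies
\[ l(\phi(\nu))\ \ge\ \begin{cases}\tfrac{j-1}{2}\,l(s)+1,&j\text{ odd},\\[2pt] \tfrac{j-2}{2}\,l(s)+2,&j\text{ even},\end{cases} \]
since $\nu$ contains the sub-path running from its first point of $\sep(\D)$ through the following $j-1$ of the alternating arcs and then at least one further edge, and a sum of $j-1$ consecutive alternating arc-lengths equals $\tfrac{j-1}{2}l(s)$ if $j-1$ is even and is at least $\tfrac{j-2}{2}l(s)+1$ if $j-1$ is odd; note the right-hand side above is non-decreasing in $j$.

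For part~\ref{it:sc1}, let $\D$ be a $k$-shell of a strongly reduced diagram, with boundary cycle $\mu_1\cdots\mu_k\b$, where each $\mu_i$ is the common boundary component of $\D$ and a region $\D_i$ and $\b$ is a component of a boundary cycle of $M$. By Corollary~\ref{cor:noperpos} every $\phi(\mu_i)$ is a piece over $\widetilde\cR$, so $|\sep(\mu_i)|\le1$ by Lemma~\ref{lem:piecesep}. Additivity then gives $|\sep(\b)|=|\sep(\D)|-\sum_{i=1}^{k}|\sep(\mu_i)|\ge 2n-k$. Substituting $j=|\sep(\b)|\ge 2n-k$ into the displayed estimate, using that it is non-decreasing in $j$ and that $2n-k$ has the same parity as $k$, yields $l(\phi(\b))\ge\bigl(n-\tfrac{k+2}{2}\bigr)l(s)+2$ when $k$ is even and $l(\phi(\b))\ge\bigl(n-\tfrac{k+1}{2}\bigr)l(s)+1$ when $k$ is odd (when $2n-k\le0$ both claimed bounds are non-positive and hold trivially, since $l(s)\ge2$ whenever the $\sep$-construction applies). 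This is exactly \eqref{eq:dshlth}.

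For part~\ref{it:sc2}, suppose for contradiction that some $w\in\widetilde\cR$ has a reduced $k$-piece factorisation $w=p_1\cdots p_k$ with $k<2n$. Take the diagram consisting of a single region $\D$ with boundary label $w$; it is (vacuously) strongly reduced, so $\sep(\D)$ is defined and $|\sep(\D)|=2n$. Writing $\pd\D=\mu_1\cdots\mu_k$ with $\phi(\mu_i)=p_i$, each $\phi(\mu_i)$ is a piece, so $|\sep(\mu_i)|\le1$ by Lemma~\ref{lem:piecesep}, and additivity gives $2n=|\sep(\D)|=\sum_{i=1}^{k}|\sep(\mu_i)|\le k<2n$, a contradiction. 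Hence no element of $\widetilde\cR$ admits a $k$-piece factorisation with $k<2n$; that is, $\widetilde\cR$ satisfies $C(2n)$.

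The conceptual content — a piece accounts for at most one of the $2n$ separating points on the boundary of a region — is immediate from Lemma~\ref{lem:piecesep}; the main obstacle is the bookkeeping in part~\ref{it:sc1}. Pinning down the constants in \eqref{eq:dshlth} requires care with the convention that the terminal vertex of an arc is excluded from its $\sep$-set, with the even/odd split, and with the identities $l(\phi(\rho))=l(r)=nl(s)$ and $l(a)+l(b)=l(s)$; these rest on $s$, and hence its cyclic permutations and powers, being geodesic, a fact available from Section~\ref{sec:prelim} under the standing hypotheses on $s$.
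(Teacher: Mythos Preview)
Your argument follows the paper's route: both parts rest on Lemma~\ref{lem:piecesep} (a piece meets at most one point of $\sep(\D)$) together with additivity of $|\sep(\cdot)|$. Your treatment of part~\ref{it:sc1} matches the paper's; the length estimate you display is exactly the ``claim'' the paper proves by induction, and your monotonicity and parity bookkeeping is correct.

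In part~\ref{it:sc2} there is a small gap. An arbitrary $w\in\widetilde\cR$ need only be a $U$-conjugate $u^{-1}r'u$ of a cyclic permutation $r'$ of $r^{\pm1}$; when $u\neq1$, a single region with boundary word $w$ admits no base point from which the label reads as $(s_0)^n$, so the $\sep(\D)$ construction (which is set up under condition~\ref{it:diag3'}) is not directly available, and you cannot write $\pd\D=\mu_1\cdots\mu_k$ with $\phi(\mu_i)=p_i$. The paper handles this by first observing that if $p$ is a piece then so is its $U$-conjugate, so a $k$-piece factorisation of $w$ transfers to one of the cyclically minimal element $r'$; the single-region diagram is then formed with boundary cycle labelled by these conjugated pieces. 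Once you insert this reduction, your argument is complete and coincides with the paper's.
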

We remark that this theorem applies to diagrams over $\widetilde \cR$, and not to diagrams over the free presentation for $G$. Indeed,
$G$ may contain subgroups isomorphic to $\FF_2\times \FF_2$, in which case, as shown by Bigdely and Wise \cite{BW}, no free
 presentation for $G$
satisfies $C(6)$. 
\begin{proof}
\be[label=(\alph*)]
\item
  Let $\D$ have boundary cycle $\rho$ with decomposition $\rho=\mu_1\cdots \mu_k\b$, as in the Definition of $k$-shell.
  From Corollary \ref{cor:noperpos}, $\phi(\mu_i)=p_i$ is a piece over $\widetilde \cR$, for $1\le i\le k$. Therefore Lemma \ref{lem:piecesep}
  implies that 
  $\sep(\mu_1\cdots \mu_k)\le k$. As $\sep(\D)=2n$, this gives $\sep(\b)\ge 2n-k$. We claim that if $\nu$ is a subpath of $\rho$
  with $\sep(\nu)\ge K$ then $l(\phi(\nu))\ge ((K-2)/2)l(s)+2$, if $K$ is even, and that $l(\phi(\nu))\ge ((K-1)/2)l(s)+1$,
  if $K$ is odd. This certainly holds if $K=0, 1$ or $2$, since one of $a$, $b$ may have length $1$. An elementary
  induction then shows that the claim holds for all $K\ge 0$.  From the claim, with $K=2n-k$, the first part of the Lemma follows. 
\item
  Let $w$ be an element of $\widetilde \cR$ with a $k$-piece factorisation $w=p_1\cdots p_k$. 
  First we note that, 
  if $p$ is a piece over $\widetilde \cR$, and $v=pq\in \widetilde \cR$, where  $v=u^{-1}v_0u$, for some cyclically minimal element $v_0$ and $u\in U$, then
  $v_0=upu^{-1}(uqu^{-1})$ and it follows that $u^{-1}pu$ is also a piece over $\widetilde \cR$. Hence we may assume that $w$ is cyclically minimal
  and we may form a diagram with a single cell $\D$ and boundary cycle $\rho$ such that $\rho=\mu_1\cdots \mu_k$, where $\phi(\mu_i)=p_i$.  
  Then $2n=\sep(\D)=\sum_{i=1}^k\sep(\mu_i)\le k$, proving the second part of the lemma.
\ee
\end{proof}
\section{Proofs of main theorems}
\label{sec:main}

\begin{proof}[Proof of Theorem \ref{thm:main}]
Let $r=s^n$, $\cR=\{r\}$ and $\GG_t=\la A\bs\{t\}\ra$. If $\lk(t)$ is empty then $\GG=\GG_t* \la t\ra$ and Theorem \ref{thm:main} follows from
well known results on one-relator products. We may therefore assume $\lk(t)$ is a non-empty clique. 
\be[(a)]
\item Let $w\in ((A\bs\{t\})^{\pm 1})^*$ be  a freely reduced word such that $w=1$ in $G$. 
 From Proposition \ref{prop:Fcancel}, there exists a strongly reduced $\widetilde \cR$-diagram  $M$ over $F$, 
 with boundary label $w$.  
From Proposition \ref{prop:sc},  $\widetilde \cR$ satisfies $C(2n)$, where $n\ge 3$. Therefore 
Greendlinger's Lemma for $C(6)$ diagrams 
(see for example \cite[Theorem 9.4]{mccawise}) implies that $M$ has
a $k$-shell, for $k\le 3$.  
Hence $\phi(\b)$ has length at least $(n-2)l(s)\ge l(s)$, so is a word of positive $t$-length, contrary to the 
assumption that $w\in \GG_t$. 
\item 
  Let $M$ be a strongly reduced $\widetilde \cR$-diagram, 
  with boundary label $s^k$, for some positive integer $k<n$.   
  As $\pd M$ has length $kl(s)$,  
  $M$ must contain at least $2$ regions and since $s^k$ is freely reduced,
  from Greendlinger's Lemma for $C(6)$ diagrams again, $M$ must contain
  at least three $d$-shells, where $d\le 3$,
  so $|\pd M|\ge 3(n-2)l(s)$. As $n\ge 3$, $3(n-2)\ge n$, so this implies $kl(s)=|\pd M|\ge nl(s)$, a contradiction.
  \ee
For the final statement 
 let $w\in \FF(A)$.
We may assume without loss of generality that $w$ is a minimal form for a non-trivial element of $F$. 
If $w=1$ in $F/N$ then
there exist elements 
$y_1,\ldots y_k\in \FF(A)$ and $\e_1,\ldots, e_k\in \{\pm 1\}$, such that
\begin{equation}\label{eq:wp}
  w=\prod_{i=1}^ky_i^{-1}r^{\e_i}y_i.
\end{equation}
From this
expression we construct, in the usual way, a diagram  over $\widetilde \cR$, with  boundary label $w$ and
regions $\D_i$, $i=1,\ldots ,k$. After deleting
cancelling regions if necessary we obtain a reduced  diagram $M$ with at most $k$ regions and boundary label $w$.
From Proposition \ref{prop:Fcancel} we may assume $M$ is strongly reduced. 
As $n\ge 4$, from
Proposition \ref{prop:sc}, $\widetilde \cR$ satisfies $C(8)$ and so from  standard small cancellation
arguments (e.g. \cite[page 246, Proposition 27]{GdH}) the number of regions of $M$
is at most $8l(w)$.
To decide whether or not $w=1$ in $F/N$ therefore amounts
to deciding whether or not one of equations   
\eqref{eq:wp}, in variables $y_i$,  with $k\le 8l(w)$ has solution. From \cite{DM} equations are decidable over partially commutative groups, so the latter problem
is decidable.

\end{proof}
\begin{proof}[Proof of Theorem \ref{thm:amalgam}]
  As above, 
  we may decompose $\GG$ as a free product with amalgamation, 
\[\GG=\AA_0\ast_U\AA_1.\]
By definition of $\lk(s)$, we have $\AA_0= U \times K$  so 
\[G=\GG/N\cong(U\times K/M)\ast_U \AA_1.\]
If $\la Y'\ra$ embeds in $K/M$ it follows that $\la \lk(s)\cup Y'\ra$ embeds in $U\times K/M$, so $\la (A\bs\supp(s)) \cup Y'\ra$ embeds
in $(U\times K/M)\ast_U \AA_1$, as required. 
Moreover, if $s$ has order $n$ in $K/M$, it follows that $s$ also has order $n$ in $G$.

Given an element $w\in \FF(A)$ we may write $w$ in the form $w=g_1u_1h_1\cdots g_ku_kh_ku_{k+1}=[\prod_{i=1}^k (g_iu_ih_i)]u_{k+1}$,
where $k\ge 0$, $u_i\in U$, $g_i\in K$ and $h_i\in \AA_1$;
using the Transformation Lemma (see Section \ref{sec:prelim}), and the fact that $[K,U]=1$. Moreover, we may assume $w$ is a minimal length word in $(A\cup A^{-1})^*$ representing 
its class as an element of $\GG$, and that $h_i$ has no left or right divisor in $U$, for all $i$.   Then $w\in N$ if and only if $g_i\in M$, for $i=1,\ldots, k$,
and $u_1h_1\cdots u_kh_ku_{k+1}=1$ in $\AA_1$. 
 If the word problem is solvable in $K/M$ we may decide whether or not $g_i\in M$; and so   the word problem is decidable in $G$. 

 Given elements $v,w\in \FF(A)$, to decide whether or not $v$ and $w$ are conjugate in $G$, we apply \cite[Theorem 4.6, page 212]{MKS}.   We may first
 replace $v$ and $w$ by elements of $\FF(A)$ representing cyclically minimal elements of $\GG$. To simplify notation let  $\AA_2=U\times K/M$. 
 If $v\in U$,  and $v$ is conjugate to $w$, then $w$ represents an element of $\AA_1\cup \AA_2$  and there exists a sequence $v,v_1,\ldots v_l=w$, where $v_i\in U$, for $i<l$ and consecutive terms
 are conjugate in $\AA_j$, for $j=1$ or $2$. As $\AA_2=U\times K/M$, this implies that $v$ is conjugate to $w$ in $\AA_1$, and this may
 be effectively verified, in the partially commutative group $\AA_1$. If $v\notin U$ but $v\in \AA_1\cup \AA_2$ then $v$ and $w$ are conjugate
 only if they belong to the
 same factor and  are conjugate in that factor. If $v$ and $w$ belong to $\AA_1$ we may verify if they are conjugate or not, as in the previous case.
 If both belong to $\AA_2$ then we use the solvability of the conjugacy problem in $K/M$ to decide whether or not they are conjugate.

 Otherwise 
   $v$ and $w$ are
 cyclically minimal as elements of  $\AA_2*_U\AA_1$ which we may write (after a cyclic permutation if necessary) 
 $w=g_1u_1h_1\cdots g_ku_kh_ku_{k+1}$ and $v= g'_1u'_1h'_1\cdots g'_lu'_lh'_lu'_{m+1}$,  where $k>0$, $u_i,u_j'\in U$, $g_i,g_j'\in K$, $h_i,h_j'\in \AA_1$ and
 $h_i$, $h_j'$ have no left or right divisors in $U$, for all $i,j$. We may
 identify any $i$ such that 
 $g_i=_G1$ and rewrite $w$, replacing $g_i$ by $1$, using the fact that the conjugacy (so word) problem is solvable in $K/M$. After repeating this process sufficiently
 often we may assume that $g_i\notin M$, for all $i$; without altering the conjugacy class of $w$ in $G$. 
 Similarly we may assume no $g_i'\in M$.  Note that $w$ is cyclically reduced as an element of  $(U\times K/M)\ast_U \AA_1$ if and only
 the first and last letters come from distinct factors (that is $g_1\neq 1$ and $h_k\neq 1$) and we may assume $w$ begins with an element
 of $K/M$, by cyclically permuting if necessary. We may therefore decide whether or not $w$ is cyclically reduced
 and, if it is not, replace it by a cyclic permutation with fewer factors. Hence we may assume that $w$ and $v$ have been rewritten as representatives of
 cyclically reduced forms (in the sense of free products with amalgamation) of elements of $\AA_2\ast_U \AA_1$, both beginning with an element of $K$.
 Then $v$ is conjugate to $w$ if and only if a cyclic permutation of $v$, followed by conjugation by an element of $U$, results in a representative of $w$.
 We may assume that $v$ as written above is an appropriate cyclic permutation and it remains to decide if $u^{-1}vu=w$, for some $u\in U$.

 First consider the case where $u=1$ and $v=w$. Then we have
 \begin{equation}\label{eq:conjeq}
   g_1u_1h_1\cdots g_ku_kh_ku_{k+1}= g'_1u'_1h'_1\cdots g'_lu'_lh'_lu'_{l+1}.
 \end{equation}
 We claim that this holds if and only if $k=l$, $g_i=g_i'$, $h_i=h_i'$, $[h_i,u'_{i+1}\cdots u'_{k+1}u^{-1}_{k+1}\cdots u_{i+1}^{-1}]=1$, for $i=1,\ldots ,k$
 and $u'_{1}\cdots u'_{k+1}=u_1\cdots u_{k+1}$.
 To see this, first observe that if \eqref{eq:conjeq} holds then
 we have $h_l'u'_{l+1}u_{k+1}^{-1}h_{k}^{-1}\in U$, and since $h_k$ and $h_l'$ have no left or right divisors
 in $U$, this forces $h_l'=h_k$ and $[h_k,u'_{l+1}u_{k+1}^{-1}]=1$. Therefore
 \[g_1u_1h_1\cdots h_{k-1}u_ku_{k+1}g_k= g'_1u'_1h'_1\cdots h_{l-1}u'_lu'_{l+1}g_l',\]
 from which we have $g_l'g_k^{-1}\in U$, so $g_l'=g_k$. Continuing this way, it follows that
 $k=l$, $g_i=g_i'$ and $[h_i,u'_{i+1}\cdots u'_{k+1}u^{-1}_{k+1}\cdots u_{i+1}^{-1}]=1$, for $i=1,\ldots ,k$. This in turn, together with \eqref{eq:conjeq}, implies that
 $u'_{1}\cdots u'_{k+1}=u_1\cdots u_{k+1}$. Conversely, if all these conditions hold, then so does \eqref{eq:conjeq}, by direct computation.  

 In the general case, $u^{-1}vu=w$ if and only if
 \begin{equation}\label{eq:gencon}
   g_1u_1h_1\cdots g_ku_kh_ku_{k+1}= g'_1u^{-1}u'_1h'_1\cdots g'_lu'_lh'_lu'_{l+1}u.
 \end{equation}
 From the claim above this holds if and only if $k=l$,
 \[g_i=g_i',\, h_i=h_i',\, [h_i,u'_{i+1}\cdots u'_{k+1}u u^{-1}_{k+1}\cdots u_{i+1}^{-1}]=1,\]
 for $i=1,\ldots ,k$
 and \[u^{-1}u'_{1}\cdots u'_{k+1}u=u_1\cdots u_{k+1}.\]
 That is, $u^{-1}vu=w$ if and only if  $k=l$, $g_i=g_i'$,  $h_i=h_i'$ and the system of $k+1$ equations
 \begin{align}
   x^{-1}(u'_{i+1}\cdots u'_{k+1})^{-1} h_i u'_{i+1}\cdots u'_{k+1}x&=(u_{i+1}\cdots u_{k+1})^{-1} h_i u_{i+1}\cdots u_{k+1},\notag\\
   x^{-1}u'_{1}\cdots u'_{k+1}x&=u_1\cdots u_{k+1},\label{al:conjeqs}
 \end{align}
 in the variable $x$, where $1\le i \le k$, has a solution in the group $U$. 
 In the terminology of \cite{DM}, $U$ is a normalised rational subset of the partially commutative
 group $\AA_1$. Thus  \eqref{al:conjeqs} is a system of equations over $\AA_1$ with normalised rational
 constraints. From \cite[Corollary 1]{DM}, the system  \eqref{al:conjeqs} is decidable. Therefore we
 may decide whether or not $v$ is conjugate to $w$ by an element of $U$.
 As $v$ has only finitely many cyclic permutations, combining the above we have a solution to the conjugacy problem in $G$.

 The final part of this proof is unsatisfactory, in that it uses the decidability of all systems of equations over partially commutative groups, to lift
 to decidability of the conjugacy problem in $G$. We therefore give an alternative argument, which depends only on decidability
 of the conjugacy problem.\footnote{The authors are grateful to Armin Weiss for pointing out this approach.}  First let $a\in A\cap K$, such that $a\notin M$. Then, replacing $g_i$ and  $g_i'$ by $a$, for all $i$ 
 in  \eqref{eq:gencon}, the argument above shows that
\begin{equation}\label{eq:alias}
a u_1h_1\cdots a u_kh_ku_{k+1}= u^{-1}a u'_1h'_1\cdots a u'_lh'_lu'_{l+1}u,
\end{equation}
for some $u\in U$, 
if and only if $k=l$, $h_i=h_i'$ and the system of equations \eqref{al:conjeqs} has a solution $x\in U$.
Since $a\notin U$ and $h_i$ has no left or right divisor in $U$ the subgraph of the non-commutation graph $\D$ of $\GG$
with vertices $\{a\}\cup\bigcup_{i=1}^k\supp(h_i)$ is connected. Therefore $\hat w=a u_1h_1\cdots a u_kh_ku_{k+1}$ has block decomposition 
$\hat w=b_0b_1\cdots b_p$, where the $b_i$ are blocks,
$b_0=a v_1h_1\cdots a v_kh_kv_{k+1}$, for some $v_i\in U$, and $b_i\in U$, for $i\ge 1$. If \eqref{eq:alias} holds,
then it follows from \cite[Proposition 5.7]{EKR}
that its right hand side, $\hat v=a u'_1h_1\cdots a u'_kh_ku'_{k+1}$, 
 has block decomposition $\hat v=b'_0b'_1\cdots b'_p$, where the $b'_i$ are blocks,
 $b'_0=a v'_1h_1\cdots a v'_kh_kv'_{k+1}$, for some $v'_i\in U$ and $b'_i\in U$, for $i\ge 1$. Moreover, in this case,
 after reordering the $b_i's$, $i\ge 1$, if necessary, the blocks 
$b_i$ and $b_i'$ are cyclically minimal and conjugate. 
From the Transformation Lemma, $b_0$ is conjugate to $b_0'$ if and only if there is an element
$z_0\in U\cap \la\supp(b_0)\ra$ such that  $z_0^{-1}b_0'z_0=b_0$, and for $i\ge 1$, $b_i$ is conjugate to $b_i'$
if and only if there exist $z_i$ such that $z_i\in \la\supp(b_i)\ra$, and  $z_i^{-1}b_iz_i=b_i'$, for $i=1,\ldots ,p$.
As $[\supp(b_i),\supp(b_j)]=1$, for all $i\neq j$,  it
follows that there exists $u\in U$ such that $u^{-1}\hat wu=\hat v$ if and only if there exist such $z_i$. 
For $i\ge 1$ the question of whether or not such a $z_i$ exists is the conjugacy problem in the partially commutative
group $\la \supp(b_i)\ra$, so is decidable. For $i=0$, the question of existence of such a $z_0$ is decidable by
\cite[Proposition 5.8]{EKR}. Therefore, we may decide whether or not $\hat w$ is conjugate to $\hat v$. This means
that we may also decide whether or not \eqref{al:conjeqs}  has a solution  $x\in U$; and applying this argument
to all cyclic permutations of $v$, the result follows.
 \end{proof}
\section{Cycle graphs with a chord}\label{sec:almostall}

In this section we argue that Theorem \ref{thm:main} almost always applies, and a Freiheitssatz holds, in the situation of
Example \ref{ex:thm_main}.\ref{it:pentchord}. 
We conjecture that a similar statement holds for arbitrary graphs. Here though we use only naive counting arguments;
a study of the conjecture over arbitrary graphs is likely to require a more systematic approach. 

As in Example \ref{ex:thm_main}.\ref{it:pentchord}, $\GG'=\GG(C'_n)$, where $C_n'$ is the graph on the left of
Figure \ref{fig:pentchord}, and $n\ge 5$. The subgroup $H$ of $\GG'$ generated by $A_0=\{a_1,\ldots, a_{n-1}\}$ is isomorphic to $\GG(C_{n-1})$, 
$\lk(t)=\{a_1, a_{n-1}\}$, $U=\la \lk(t)\ra$ and HNN$(t)=\la H, t\,|\, t^{-1}at=a, \forall a\in \lk(t)\ra$.
As input to the question ``for which elements of $\GG'$ does Theorem \ref{thm:main} hold?'' we choose a particular set
$L$ (see Section \ref{sec:nf}) of words over $(A\cup A^{-1})^*$ with the property that every cyclically minimal element of $H$ is represented by a unique element of $L$,
every cyclically minimal element of $\GG'$ ending in a $t$-letter is represented by a unique element of $L$
and every element of $L$ represents a cyclically minimal element of $\GG'$. 
Every element of $L$ is reduced with respect to HNN$(t)$ and is written as 
  $w=g_1t^{\a_1}g_2\cdots g_mt^{\a_m}$, where  $m\ge 0$, $g_i\in H$, $\e_i=\pm 1$,  and $\a_i\in \ZZ\bs\{0\}$. The set of  elements $w\in L$  for which 
  $|\a_1|+\cdots +|\a_m|\le k$ and $l(g_i)\le d$, for all $i$, is denoted by $L(d,k)$.
We show that almost all words of $L$ satisfy the  hypotheses of Theorem  \ref{thm:main}, in a sense which we now make precise. 
For a subset $S$ of $L$ we define the \emph{asymptotic density} of $S$ to be
\[\rho(S)=\lim_{d,k\maps \infty} \frac{|S\cap L(d,k)|}{|L(d,k)|}.\]
(In this limit, both $d$ and $k$ must become, simultaneously, sufficiently large. The asymptotic density of $S$
is undefined if the limit does not exist.) 
For general discussion of asymptotic density and it properties we refer, for example, to \cite{MSU, KMSS, FMR}: 
the 2-parameter ``bidimensional'' asymptotic density defined in \cite{FMR} plays a role equivalent to the
limits as both $d$ and $k$ approach infinity, defined here.

As subset $S\subseteq L$ is said to be \emph{generic} if $\rho(S)=1$ and \emph{negligible} if $\rho(S)=0$.
\begin{prop}\label{prop:ad}
  In the above terminology, let $L_Y$ be the subset of $L$ consisting of words which satisfy the
  hypotheses of Theorem \ref{thm:main}. Then $L_Y$ is generic.
\end{prop}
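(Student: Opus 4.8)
The plan is to show that the set $L_Y$ of words in $L$ satisfying hypotheses \ref{it:main_hyp1}--\ref{it:main_hyp4} of Theorem \ref{thm:main} is generic by showing that the complement $L\setminus L_Y$ is negligible. Since a finite union of negligible sets is negligible, it suffices to treat each way a word can fail to lie in $L_Y$ separately. Condition \ref{it:main_hyp1} is automatic here, since $\lk(t)=\{a_1,a_{n-1}\}$ is a clique in $C_n'$ (there is an edge between $a_1$ and $a_{n-1}$). So I must show that each of the sets
\[
B_2=\{w\in L: w \text{ fails } \ref{it:main_hyp2}\},\quad B_3=\{w\in L: w\in\la\st(t)\ra\},\quad B_4=\{w\in L: w \text{ fails } \ref{it:main_hyp4}\}
\]
has asymptotic density $0$, and then conclude $\rho(L_Y)=1$.

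First I would dispose of $B_3$. A word $w\in\la\st(t)\ra=\la a_1,a_{n-1},t\ra$ has $\supp(w)\subseteq\{a_1,a_{n-1},t\}$; in particular all its $H$-syllables $g_i$ lie in $\la a_1,a_{n-1}\ra=U$. I would count: among words $w=g_1t^{\a_1}\cdots g_mt^{\a_m}$ in $L(d,k)$, the proportion with \emph{every} $g_i$ supported on just two of the $n-1$ available vertices of $C_{n-1}$ decays exponentially in the total $H$-length (and even one syllable $g_i$ with $l(g_i)\ge 1$ using a vertex outside $\{a_1,a_{n-1}\}$ suffices to leave $\la\st(t)\ra$). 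This is a routine generating-function / counting estimate: the key point is that in $H=\GG(C_{n-1})$ the number of cyclically minimal words of length $\le d$ grows exponentially in $d$, and the fraction supported on a fixed pair of vertices is exponentially small; taking $d\to\infty$ drives the density to $0$. The set $B_4$ of words that are $t$-roots failing — i.e. $w$ with $\s(w)$ a proper power in $F_D$ — is similarly negligible: $\s(w)$ being an $\ell$-th power with $\ell\ge2$ forces a strong periodicity on the sequence $(\bar g_1,t^{\a_1},\ldots)$, and the number of periodic words of total $t$-length $\le k$ is negligible among all words of $t$-length $\le k$ by the standard argument that periodic words form an exponentially small fraction. (One must also rule out $w$ being a proper power in $\GG'$ directly for small $t$-length cases, but as noted in Section \ref{sec:redn} a cyclically reduced $t$-root ending in a $t$-letter cannot be a proper power, so this is subsumed.)

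The main obstacle is $B_2$: controlling failure of $t$-thickness. By Definition \ref{def:tthick}, $w=g_1t^{\a_1}\cdots g_mt^{\a_m}$ (here with $\e_i$ absorbed into the sign of $\a_i$, and the relevant $H$-coefficients being the $g_i$ together with the wrap-around product $g_1\cdot(\text{end})$) fails to be cyclically $t$-thick precisely when some $H$-coefficient $g$ lies outside $U\cup\maln_H(U)$. Since $U=\la a_1,a_{n-1}\ra$ and $a_1,a_{n-1}$ are \emph{not} adjacent in $C_{n-1}$ (they are at distance $2$ via $t$ in $C_n'$, but within $C_{n-1}=H$ there is no edge), $U$ is a free subgroup of rank $2$, not a clique, so Lemma \ref{lem:thick} does not apply verbatim; instead I would argue directly. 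An element $g\in H$ with $g\notin U$ fails to be in $\maln_H(U)$ only if some nontrivial $v\in U$ is conjugated into $U$ by $g$, which by Corollary \ref{cor:conj1} (applied with $Y=\lk(t)$) forces $\supp(g)$, after removing left/right $U$-divisors, to commute with $\supp(v)\subseteq\{a_1,a_{n-1}\}$; that is, $g$ has a reduced form $g=u_1\cdot c\cdot u_2$ with $u_1,u_2\in U$ and $[\supp(c),\{a_1\}]=1$ or $[\supp(c),\{a_{n-1}\}]=1$. Such $g$ are constrained to $\la\st(a_1)\ra$ or $\la\st(a_{n-1})\ra$ (up to $U$-multiples), which are proper canonical parabolic subgroups; so the fraction of $g_i\in H$ with $l(g_i)\le d$ lying in this "bad" set is again exponentially small in $d$, by the same exponential-growth-versus-proper-parabolic counting. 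Since $w\in B_2$ requires \emph{all} its $H$-coefficients to be bad, and there are at least $m\ge 1$ of them with $m$ itself typically growing with $k$, $\rho(B_2)=0$. I would then assemble: $\rho(B_2\cup B_3\cup B_4)=0$, hence $\rho(L_Y)=1$, completing the proof. The delicate bookkeeping will be making the "exponentially small fraction of parabolic elements" estimate uniform as both $d$ and $k$ tend to infinity, which is where the two-parameter density of \cite{FMR} is needed; I would isolate that as a counting lemma about $L(d,k)$ proved by elementary generating-function bounds on the number of minimal forms in $\GG(C_{n-1})$.
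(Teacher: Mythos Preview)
Your overall strategy---show that the complement $L\setminus L_Y$ is negligible by handling each failed hypothesis separately---is exactly the paper's. However, two of your steps contain genuine errors.

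First, a structural mistake about the graph. In $C_n'$ the chord is precisely the edge $\{a_1,a_{n-1}\}$; this is why deleting $t$ from $C_n'$ leaves an honest cycle $C_{n-1}$, as the paper states. Hence $[a_1,a_{n-1}]=1$ in $\GG'$ and $U=\la a_1,a_{n-1}\ra\cong\ZZ^2$ is abelian, not free of rank $2$. Thus $\lk(t)$ \emph{is} a clique and Lemma~\ref{lem:thick} applies directly to characterise $\maln_H(U)$; your detour through Corollary~\ref{cor:conj1} is unnecessary. Relatedly, condition~\ref{it:main_hyp1} is not quite automatic: it presupposes $t\in\supp(s)$, so every element of $L(d,0)\subseteq H$ fails it. The paper records the easy fact that $l(d,0)/l(d,k)\to 0$.

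Second, and more seriously, the logic in your treatment of $B_2$ is inverted. You correctly observe that $w$ fails to be $t$-thick when \emph{some} $H$-coefficient $g_i$ lies outside $U\cup\maln_H(U)$, but then write ``$w\in B_2$ requires \emph{all} its $H$-coefficients to be bad''. This is backwards: membership in $B_2$ requires only one bad coefficient, so more syllables make failure \emph{more} likely, not less. What you actually need is that the fraction $\varepsilon(d)$ of bad elements among $L_H^U(d)$ decays fast enough that a product of roughly $k$ good fractions still tends to $1$ as $d,k\to\infty$ simultaneously. The paper does exactly this computation: the number of bad elements in $L_H^U(d)$ is $e(d)=2(3^d-1)$, whereas $l_H^U(d)\ge (2n-7)^d-1$ for $n\ge 6$ (and $l_H^U(d)=3^{d-1}(3+2d)$ for $n=5$), so $\varepsilon(d)\to 0$ and the resulting ratio $(2t_H^U(d)+1)^k/(2l_H^U(d)+1)^k\to 1$. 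Your ingredients are right but the direction of the inequality must be repaired before the argument is valid.
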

In the remainder of this section we define  the set $L$  and prove this proposition.

\subsection{Normal forms}\label{sec:nf}

The definition of  $L$ depends on a choice of normal forms for elements of $H$. The subgroup $H$ is $\GG(C_{n-1})$ where $C_{n-1}$ is the
cycle graph with vertices $A_0$.
 We may define a set $L_H$ of unique normal forms for all $n\ge 5$, as we do below, but observe that 
the case $n=5$ is special, as in this case $H=\FF(a_1,a_3)\times \FF(a_2,a_4)$, and an alternative normal form may
be chosen using this decomposition. It turns out that some of the bounds we derive for our general
normal form involve division by $n-5$, so would need special treatment when $n=5$. Moreover, the alternative
normal form for the case $n=5$ allows relatively straightforward computation of the precise sizes of
sets we're interested in. This being the case, we make separate definitions of normal forms for elements of $H$ in the
case $n=5$ and the case $n\ge 6$. These definitions result in different choices of sets $L$ in the two cases. 

In the  case $n=5$ we say a word $w$  over $(A_0\cup A_0^{-1})^*$ is a \emph{square normal form} if 
$w=w_1w_2$, where $w_1$ is a reduced word in $\FF(a_2,a_4)$ and $w_2$ is a reduced word in $\FF(a_1,a_3)$.
(The graph $C_4$ is a ``square''.)
 Then  every square normal form is a minimal form and every element of $H$ is represented by  a unique square normal form.

For $n\ge 5$, 
we say a word $w$  over $(A_0\cup A_0^{-1})^*$ is a \emph{normal form} if 
\be[label=(\roman*)]
\item $w$ is freely reduced and
\item contains no subword of the
  form $a_{i+1}^\e a_{i-1}^\b a_{i}^\d$, (subscripts modulo $n-1$) where $1\le i\le n-1$, 
   $\e,\d\in \{\pm 1\}$ and $\b\in \ZZ$ ($\b$ may be zero).
  \ee
Note that if $w$ is a word in normal form then it is minimal, because the centraliser of $a_i$ in $H$ is $C_H(a_i)=
\la a_{i-1},a_i,a_{i+1}\ra$, and $w$ can contain no subword of the form $a_i^{-\d}u a_i^{\d}$, with $u\in \la a_{i-1}, a_{i+1}\ra$.
 We claim that every element of $H$ is equal to a unique word in normal form. 
To see this, define a word to be \emph{prohibited} if it 
has the form \[(\prod_{j=1}^r a_{i+1}^{\a_j}a_{i-1}^{\b_j})a_i^\d,\] where $r\ge 1$, $\d\in \{\pm 1\}$, $\a_j\neq 0$, for all $j$ and $\b_j\neq 0$, for $j<r$
(where subscripts are written modulo $n-1$). Let $v$ be a minimal length word representing an element of $H$ and suppose that
$v\equiv v_0v_1v_2$, where $v_1$ is prohibited and is the leftmost prohibited subword of $v$: that is, if $v\equiv u_0u_1u_2$, where $u_1$ is prohibited,
then $|v_0|\le |u_0|$. If $v_1=(\prod_{j=1}^r a_{i+1}^{\a_j}a_{i-1}^{\b_j})a_i^\d$ then let $v_1'=a_i^\d(\prod_{j=1}^r a_{i+1}^{\a_j}a_{i-1}^{\b_j})$, so
$v=_Hv_0v_1'v_2$ and if $v_0v_1'v_2\equiv u_0u_1u_2$, where $u_1$ is prohibited, then $|u_0|>|v_0|$ (as the first letter $a_i^\d$ of $v_1'$ cannot be part
of a prohibited subword). Continuing this way we may eventually write $v$ to a normal form $u$ with $v=_H u$. Hence every element of $H$ is
 represented by a normal form.

It remains to show that normal forms are unique, for which we shall use the fact that, if $w$ is a normal form and $w\equiv w_0a_k^\g w_1$, where 
$[a_k, \supp(w_1)]=1$, then $w_0w_1$ is also a normal form. Indeed, if $w_0w_1$ contains a subword $u=a_{i+1}^\e a_{i-1}^\b a_{i}^\d$, with $\e,\d\in \{\pm 1\}$, then 
the first letter $a_{i+1}^\e$ of $u$ occurs in $w_0$ and the last letter $a_i^\d$ occurs in $w_1$. Thus $w_0=w_0'a_{i+1}^\e a_{i-1}^{\b_0}$ and
$w_1=a_{i-1}^{\b_1} a_{i}^\d w_1'$, 
where $\b_0+\b_1=\b$ and $w_0',w_1'$ are initial and terminal subwords of $w_0$ and $w_1$, respectively. As $[a_k, \supp(w_1)]=1$, we have $[a_k,a_i]=1$ so $k\in \{i-1,i,i+1\}$. If $k=i-1$ then $w$ has a subword
$a_{i+1}^\e a_{i-1}^{\b+\g}a_i^{\d}$. If $k=i$ then $w$ has a subword $a_{i+1}^\e a_{i-1}^{\b_0}a_i^{\g}$ and if $k=i+1$ then $w$ has a subword
$a_{i+1}^\g a_{i-1}^{\b_1}a_i^\d$. None of these are possible as $w$ is a normal form, so $w_0w_1$ is  a normal form, as claimed.

We now use induction on length to show each element of $H$ has exactly one normal form. This is certainly true for the element of length $0$, and 
we assume that it is true for all elements of length at most $m$, for some $m\ge 0$. Next suppose  $w$ and $w'$ are normal forms with 
$w=_Hw'$, $l(w)=m+1$ (so $l(w')=m+1$) and $w\nequiv w'$. Let $w\equiv w_0a_i^\e$, where $\e\in \{\pm 1\}$.
Then $w'\equiv w_0'a_j^\d$, for some $\d\in \{\pm 1\}$, where  $j\neq i$; otherwise
$w_0$ and $w_0'$ are distinct normal forms of length $m$ for the same element of $H$. Thus $w$ has right divisors $a_i^\e$ and $a_j^\d$, so $[a_i,a_j]=1$ and
we have $w_0\equiv w_1a_j^{\d}w_2$ and $w_0'\equiv w_1'a_i^{\e}w_2'$, where $[a_j, \supp(w_2)]=[a_i,\supp(w_2')]=1$, $a_j\notin\supp(w_2)$,
$a_i\notin\supp(w_2')$ and $j=i\pm 1$. Therefore,
$w\equiv  w_1a_j^{\d}w_2a_i^{\e}$ and $w'\equiv w_1'a_i^{\e}w_2'a_j^{\d}$ and, using the fact above, both $w_1w_2a_i^{\e}$ and $w_1'a_i^{\e}w_2'$ are normal forms.
As $w_1w_2a_i^{\e}=_H w_1'a_i^{\e}w_2'$, the inductive assumption  implies $w_1w_2a_i^{\e}\equiv w_1'a_i^{\e}w_2'$ and, as $a_i\notin \supp(w_2')$ it follows
that $w_2'\equiv 1$. Similarly $w_2\equiv 1$, and now we have $w_1\equiv w_1'$. Therefore $w\equiv w_1a_j^{\d}a_i^\e$ and $w'\equiv w_1a_i^{\e}a_j^{\d}$, with
$j=i\pm 1$. This is a contradiction, since $w$ and $w'$ cannot both be normal forms, and the result follows.

Therefore we have unique normal forms for elements of $H$, for $n\ge 5$.
Next we define subsets of the set of  normal forms needed for analysis of Theorem \ref{thm:main}. 
\begin{itemize}
\item For $n=5$, let $L_H$ be the set of square normal forms for elements of $H$, and for  $n\ge 6$ let $L_H$ be the set of
  normal forms for elements of $H$. In both cases 
  let $L_H(d)$ be the set of elements of $L_H$ of length at most $d$, and set $l_H(d)=|L_H(d)|$.
\item Let $L_H^U\subseteq L_H$  be the set of normal forms
  for elements of $H$ which have no non-trivial left divisor in $U$. Let $L_H^U(d)=L_H^U\cap L_H(d)$ and let $l_H^U(d)=|L_H^U(d)|$.
\item Let $L_U$ be the subset $\{a_{n-1}^\a a_1^\b:\a,\b\in \ZZ\}$ of $L_H$. Then $L_U$ is 
  the subset of $L_H$ consisting of normal forms for elements of $U$ (for all $n\ge 5$). 
  Let $L_U(d)$ be the set of elements of $L_U$ of length at most $d$ and set 
  $l_U(d)=|L_U(d)|$.
\item Let $L_{H,S}(d)$, $L_{H,S}^U(d)$ and $L_{U,S}(d)$ be the subsets $L_H$, $L_H^U$ and $L_U$, respectively, of elements of length exactly $d$.
  Set $l_{H,S}(d)=|L_{H,S}(d)|$, $l_{H,S}^U(d)=|L_{H,S}^U(d)|$ and $l_{U,S}(d)=|L_{U,S}(d)|$.  
\item Let $L_t(k)$ be the set of cyclically reduced words of $(A\cup A^{-1})^*$ representing elements of $\GG'$ which end in a $t$-letter and have
  the form $ug_1t^{\e_1}\cdots g_k t^{\e_k}$, where $g_i\in L_H^U$ and $u\in L_U$. 
\end{itemize}
%
 As the unique element of $L_U\cap L_H^U$ is $1$, the condition that elements of $L_t(k)$ are cyclically
 reduced amounts to the condition that, for all $i$; either $g_i\neq 1$ or
 $\e_{i-1}=\e_i$ (subscripts modulo $n$). 

 Fix $k\ge 1$ and  let $w$ be an element of $L_t(k)$. We say that $w$ is \emph{composed} if either
 \be[label=(\roman*)]
 \item\label{it:composed1} $w=ut^k$ or $w=ut^{-k}$, for some $u\in U$, or
 \item\label{it:composed2}
 $w=ug_1t^{\a_1}\cdots g_rt^{\a_r}$, where $g_i\in L_H^U\bs \{1\}$, $u\in L_U$, $|\a_i|\ge 1$ and
 $(|\a_1|,\ldots ,|\a_r|)$ is a composition of $k$ into $r$ parts; for some $r$ such that $1\le r\le k$.
 \ee
\begin{itemize}
\item  Let $L(k)$  the set of composed elements of $L_t(k)$ and 
 let $\Li(k)$ and $\Lii(k)$ be the sets of elements of $L(k)$ of  types \ref{it:composed1}
 and \ref{it:composed2}, respectively.
\item For $d\ge 0$ denote by $L(d,0)$ the set of normal forms in $L_H$ representing cyclically minimal elements of $H$ and set 
  $l(d,0)=|L(d,0)|$.
\item For  $d\ge 0$ and $k\ge 1$ denote by $\Li_S(d,k)$ the set of elements of $\Li(k)$ such that $u\in L_U(d)$. Set $\li_S(d,k)=|\Li_S(d,k)|$.
\item For $d\ge 0$ and $k\ge 1$  denote by $\Lii_S(d,k)$ the set of elements of $\Lii(k)$ such that $g_i\in L_H^U(d)$, for all $i$, $u\in L_U$ and
  $l(ug_1)\le d$.   Set $\lii_S(d,k)=|\Lii_S(d,k)|$.
\item For  $d\ge 0$ and $k\ge 1$ set $L_S(d,k)=\Li_S(d,k)\cup \Lii_S(d,k)$ and let $l_S(d,k)=|L_S(d,k)|$.
\item  For  $d\ge 0$ and $k\ge 1$ set   $\Li(d,k)=\cup_{l=1}^k\Li_S(d,l)$, $\Lii(d,k)=\cup_{l=1}^k\Lii_S(d,l)$, $L(d,k)=L(d,0)\cup\Li(d,k)\cup\Lii(d,k) $ and 
    $\li(d,k)=|\Li(d,k)|$, $\lii(d,k)=|\Lii(d,k)|$ and $l(d,k)=|L(d,k)|$.
%
\item \label{it:LiidkrP}
For each integer $r$ such that $1\le r\le k$ and each composition $P$ of $k$ into $r$ parts, 
set
\[\Lii_S(d,k,r,P)=\{ug_1t^{\a_1} \cdots g_rt^{\a_r}\in \Lii_S(d,k)\,:\, (|\a_1|,\ldots ,|\a_r|)=P\}.\]
Set $\lii_S(d,k,r,P)=|\Lii_S(d,k,r,P)|$.
\item Let $L=\bigcup_{d\ge 0, k\ge 0}L(d,k)$. 
\end{itemize}

\subsection{Counting normal forms}
To find bounds on the size of $L(d,k)$ we first  compute bounds on the sizes of  $L_H(d)$, $L_U(d)$ and $L_H^U(d)$.
 We consider cases $n=5$ and $n\ge 6$ separately, though the bounds we find for $n\ge 6$ also apply in the case $n=5$,
except in cases where division by $n-5$ is involved. First suppose that $n=5$. 
All words of length $0$ and $1$ are in $L_H$, so $l_H(0)=1$ and $l_{H,S}(1)=2(n-1)=8$.
For $k\ge 2$, the number of normal forms $w_1w_2$ such that $l(w_1)=p$ and $l(w_2)=k-p$, where $1\le p\le k-1$, is $4^23^{k-2}$, and there
are $8\cdot 3^{k-1}$ normal forms $w_1\cdot w_2$ of length $k$ with either $w_1$ or $w_2$ trivial.
Hence $l_{H,S}(k)=8\cdot 3^{k-2}(2k+1)$, for $k\ge 1$. Summing over $k$ from $0$ to $d$ gives
\begin{equation}\label{eq:lhd_5}
  l_H(d)=1+8d3^{d-1}, \textrm{ when } n=5.
\end{equation}

Now consider the case $n\ge 6$. Again all words of length $0$ and $1$ are in $L_H$, so $l_H(0)=1$ and $l_{H,S}(1)=2(n-1)$. 
A word of length $2$ is in $L_H$ if and
only if it is of the form $a_i^\e a_j^\d$, where $\e,\d\in \{\pm 1\}$, and satisfies the conditions that $j\neq i-1$ and that if $i=j$ then $\e=\d$. 
Therefore $l_{H,S}(2)=2(n-1)(2n-5)$. We claim that, setting $\a=2n-5$ and $\g=2n-7$, 
\begin{equation}\label{eq:LHSbound}
  2(n-1)\g^{m-1}\le l_{H,S}(m)\le  2(n-1)\a^{m-1}, \textrm{ for all } m\ge 2.
\end{equation}
To verify this, assume that it holds for some $m\ge 2$. If $w$ is a normal form of length $m+1$ then $w=ua_j^\d$, for some $u$ of length $m$,
which is necessarily in $L_H$, $a_j\in A_0$ and $\d=\pm 1$. From the argument used to establish the value of $l_{H,S}(2)$; for each such $u$,
there are at most
$2n-5$ choices for $a_j^\d$. Hence $l_{H,S}(m+1)\le \a l_{H,S}(m)\le  2(n-1)\a^{m}$.
Some of these choices do not give rise to normal forms, as they result in prohibited subwords of $w$. On the
other hand if $u=va_i^\e$, and $i\notin \{j-1,j+1\}$ then (as $u\in L_H$) $ua_j^\d$ contains no prohibited subword, so is in $L_H$ (as long as it
is reduced). Hence there are at least $2n-7$ choices of $a_i^\e$ which do result in $ua_i^\e$ being  a normal form. Therefore there
are at least $\g l_{H,S}(m)$ possibilities for $w$. That is, $l_{H,S}(m+1)\ge \g l_{H,S}(m)\ge  2(n-1)\a\g^{m-1}$. Hence, as $\a\ge \g$,  \eqref{eq:LHSbound}
follows by induction on $m$. Summing over $m$ from $0$ to $d$ we obtain 
\begin{equation}\label{eq:LHbound}
  1+\frac{n-1}{n-4}[\g^{d}-1]\le l_{H}(d)\le  1+\frac{n-1}{n-3}[\a^{d}-1],
\end{equation}
for all  $d\ge 1$, in the case $n\ge 6$.

Normal forms of elements of $U$ all have the form $a_{n-1}^a a_1^b$, for some integers $a,b$, and for all $n\ge 5$. Therefore
the number of elements 
of $U$ of length exactly $d$ is $4d$, and 
\begin{equation}\label{eq:lu}
  l_U(d)=1+2d(d+1), \textrm{ for all } d\ge 0 \textrm{ and } n\ge 5.
\end{equation}

Next we consider $L_H^U$, the set of normal forms for elements of $H$ which have no non-trivial left divisor in $U$.
When $n=5$ a normal form $w_1w_2$ belongs to $L_H^{U}$ if and only if $w_1$ does not begin with $a_4^{\pm 1}$ and $w_2$ does not begin
with $a_1^{\pm 1}$. There are $2\cdot 3^{l-1}$ elements of $F(a_2,a_4)$ of length $l\ge 1$, beginning with $a_4^{\pm 1}$; and similarly 
 for $F(a_1,a_3)$ and $a_3^{\pm 1}$. It follows that the number of elements of $L_H^U$ of length exactly $k$ is $4\cdot 3^{k-2}(2+k)$. Therefore
\begin{equation}\label{eq:lhdu_5}
l_H^U(d)=3^{d-1}(3+2d),\textrm{ for all } d\ge 0\textrm{ when } n=5.
\end{equation}  
In the case $n\ge 6$ we find bounds on the size of $L_H^U(d)$.
We 
partition $L_H^U$ into three subsets: \ref{it:LH1} normal forms beginning with elements $a_i^{\pm 1}$, where $3\le i\le n-3$; \ref{it:LH2} normal
forms beginning $a_2^{\pm 1}$ and \ref{it:LH3} normal forms beginning $a_{n-2}^{\pm 1}$. 
\be[label=(\alph*),ref=(\alph*)]
\item\label{it:LH1} Normal forms beginning  $a_i^{\pm 1}$, where $3\le i\le n-3$, have no left divisor in $U$. 
    There are   $2(n-5)$ choices for the  first letter $a_i^{\e}$, where $3\le i\le n-3$, of such a
    normal form. As in the derivation of bounds in \eqref{eq:LHSbound} above, 
    there are  $2n-5$ possibilities for the second letter, and between $2n-7$  and $2n-5$ possibilities for each subsequent letter,  of such a normal form.
    Thus, for $d\ge 2$,   $L_{H}^U$ contains at least $2(n-5)\a\g^{d-2}\ge 2(n-5)\g^{d-1}$ and at most  $2(n-5)\a^{d-1}$ elements of type  \ref{it:LH1}, with length exactly $d$.
    Therefore, 
the number $a(d)$ of elements of type  \ref{it:LH1} in  $L_{H}^U(d)$ satisfies
\begin{equation}\label{eq:abounds}
  \frac{n-5}{n-4}(\g^{d}-1)\le a(d) \le \frac{n-5}{n-3}(\a^{d}-1).
\end{equation}
\item\label{it:LH2} To bound the number $b(d)$ of normal forms of length at most $d$
  which begin with $a_2^{\pm 1}$ and have no left divisor in $U$, we begin by finding bounds on the number
  $b_S(d)$ of such normal forms of length exactly $d$. We have $b_S(0)=0$ and $b_S(1)=2$, as the possibilities are $a_2^{\pm 1}$. 
To estimate $b_S(d)$, for $d>1$, 
  it is convenient to consider normal forms beginning
  with elements of the set $M$ of reduced words in $\{a_2^{\pm 1}\}(\{a_2^{\pm 1},a_{n-1}^{\pm 1}\})^*$.
  To this end suppose $w$ is a normal form of type \ref{it:LH2} and $w\equiv gh$, where
  $g$ is a maximal prefix of $w$ belonging to $M$ and $g$ has length $t\ge 1$. There are $2$ possibilities for the first letter of $g$ and $3$ for each subsequent letter;
  so $2.3^{t-1}$ such $g$ altogether. To find those $gh$ with 
  no left divisor in $U$ we distinguish between those
  $g$ ending $a_2^{\pm 1}$ and those ending $a_{n-1}^{\pm 1}$. Let $N$ be the set of all elements of $M$  ending in
  $a_2^{\pm 1}$ and let $P=M\bs N$. Let $n(t)$ and $p(t)$ be the number of elements of $N$ and $P$, respectively, of length $t\ge 1$.
  It follows (by induction) that
  \[n(t)=\begin{cases}
      3^{t-1}-1& \textrm{if }t\textrm{ is even}\\
      3^{t-1}+1& \textrm{if }t\textrm{  is odd}
    \end{cases}
    ,
    \textrm{ and so } p(t)=\begin{cases}
      3^{t-1}+1& \textrm{if }t\textrm{ is even}\\
      3^{t-1}-1& \textrm{if }t\textrm{  is odd}
    \end{cases}
    ,
  \]
  for all $t\ge 1$.

 If $w\equiv gh$, where $g\in P$ so $g$ ends in $a_{n-1}^{\pm 1}$, then $h$ cannot begin with a letter in $\{a_2^{\pm 1},a_{n-1}^{\pm 1}\}$,
  by maximality of $g$, 
  cannot begin with $a_{n-2}^{\pm 1}$, by definition of normal form, and cannot begin with $a_1^{\pm 1}$ as $w$ has no left divisor
  in $U$. Thus the first letter of $h$ belongs to $\{a_3^{\pm 1},\ldots ,a_{n-3}^{\pm 1}\}$. As neither $a_1$ or $a_{n-1}$ commutes with any
  element of this set, each subsequent letter of $h$ may be any member of  $A_0^{\pm 1}$ which results in a normal form. As before, the 
  the number of possibilities for such an $h$ of length $s$ is at most  $2(n-5)\a^{s-1}$ and at least $2(n-5)\g^{s-1}$.

 On the other hand, if $w\equiv gh$, where $g\in N$ so $g$ ends in $a_{2}^{\pm 1}$, then $h$ cannot begin with a letter in 
  $\{a_2^{\pm 1},a_{n-1}^{\pm 1}\}$ and cannot begin with $a_{1}^{\pm 1}$. As $a_{n-1}$ does not
  commute with $a_2$ and $a_1$ does not commute with the first letter of $h$, each subsequent letter of $h$ may again be any element
  of $A_0^{\pm 1}$ which results in a normal form. The number 
  of possibilities for $h$ of length $s$ in this case is then at most   $2(n-4)\a^{s-1}$and at least  $2(n-4)\g^{s-1}$.

  Now, for $k\ge 2$,
  let $b(k,t)$ be the number of normal forms $w\in L_H^U$ of length $k$, of type \ref{it:LH2}, where $w\equiv gh$, as above, and $g$ has length $t\ge 1$;
  so $h$ is of length $s=k-t$. When $t<k$ (so $s\ge 1$) 
  \begin{align}
    b(k,t)&\le 
            n(t)(2(n-4)\a^{s-1})+p(t)(2(n-5)\a^{s-1})\notag\\
          &=2\a^{s-1}[(n-4)n(t)+(n-5)p(t)].\label{al:bktupper}
  \end{align}
  Similarly, 
  \begin{equation}\label{eq:bktlower}
 b(k,t)\ge 2\g^{s-1}[(n-4)n(t)+(n-5)p(t)],
\end{equation}
where $s=k-t$. 
For the cases where $t=k$ and $h$ is trivial,  $b(k,k)=n(k)+p(k)=2\cdot 3^{k-1}$.
We leave consideration of $b(d)$ at this point and turn to normal forms of type (c), as this makes later computation simpler.
\item\label{it:LH3}
Again, to bound  the number $c(d)$ of normal forms of length at most $d$,
which begin with $a_{n-2}^{\pm 1}$ and have no non-trivial left divisor in $U$,  we shall first bound the number $c_S(d)$ of such normal forms of length exactly $d$.
As before, $c_S(0)=0$ and $c_S(1)=2$. To find bounds on $c_S(k)$ for $k\ge 2$ we consider normal forms $w$ which factor as $gh$, where $g$ is a maximal prefix in the 
set of reduced words in $\{a_{n-2}^{\pm 1}\}(\{a_{n-2}^{\pm 1},a_{1}^{\pm 1}\})^*$. Let the number of such $g$ of length $t\ge 1$ which
end in $a_{n-2}$ be  $n'(t)$ and the number ending in $a_1$ be $p'(t)$.  Then $n'(t)=n(t)$ and $p'(t)=p(t)$, where $n(t)$ and $p(t)$
are as in case  \ref{it:LH2} above. Following through the argument of case \ref{it:LH2}, we find that the number $c(k,t)$  of
  normal forms $w\in L_H^U$ of length $k$, of type \ref{it:LH3}, where $w\equiv gh$, as above, and $g$ has length $t\ge 1$, satisfies
  \begin{align}
    c(k,t) &\le 2\a^{s-1}[(n - 5)n(t) + (n - 4)p(t)] \textrm{ and }\label{al:cktupper}\\
     c(k,t)&\ge 2\g^{s-1}[(n - 5)n(t) + (n - 4)p(t)]\label{al:cktlower},
  \end{align}
  where $k\ge 2$, $s=k-t$ and $\a$ and$\g$ are as above.

  As   \[[(n - 4)n(t) + (n - 5)p(t)] + [(n - 5)n(t) + (n - 4)p(t)]=2(2n-9)3^{t-1}\]
  combining \eqref{al:bktupper}, \eqref{eq:bktlower}, \eqref{al:cktupper} and  \eqref{al:cktlower}
  and setting $\b=2n-9$, gives
  \begin{equation*}
4\b\g^{s-1}3^{t-1}\le b(k,t)+c(k,t)\le 4\b\a^{s-1}3^{t-1},
\end{equation*}
when $k\ge 2$ and $t\ge 1$.
Also, $c(k,k)=b(k,k)=2\cdot 3^{k-1}$, when $k\ge 2$.

 Therefore, for $n\ge 6$ and $k\ge 2$, 
   \begin{align}
     b_S(k)+c_S(k)&\le 4[3^{k-1}+\b\sum_{t=1}^{k-1} \a^{k-t-1} 3^{t-1}]\notag\\
                  &\textcolor{black}{=4\left(3^{k-1}+\b\frac{\a^{k-1}-3^{k-1}}{\a-3}\right)}\notag\\
           &\textcolor{black}{=\frac{2}{n-4}(3^{k-1}+ \b\a^{k-1})}\notag\\
                  &\le \frac{2}{n-4}(\b+1)\a^{k-1},\textrm{ as }3\le \a,\notag\\
                    & =4\a^{k-1}.\label{al:bcsupper}
   \end{align}
   Similarly, for $n\ge 6$ and $k\ge 2$, 
    \begin{align}
    b_S(k)+c_S(k)&\ge 4[3^{k-1}+\b\sum_{t=1}^{k-1} \g^{k-t-1} 3^{t-1}]\notag\\
                  &\textcolor{black}{=4\left(3^{k-1}+\b\frac{\g^{k-1}-3^{k-1}}{\g-3}\right)}\notag\\
                  &\textcolor{black}{=\frac{2}{n-5}(-3^{k-1}+ \b\g^{k-1})}\notag\\
                  &\textcolor{black}{\ge\frac{2}{n-5}(\b-1)\g^{k-1}},\textrm{ as }3\le \g,\notag\\
                  &=4\g^{k-1}.\label{al:bcslower}
    \end{align}
Note that in fact \eqref{al:bcsupper} and \eqref{al:bcslower} hold for all $k\ge 1$. 
 Then
\begin{align}   
  b(d)+c(d)&= \sum_{k=1}^d b_S(k)+c_S(k)\le 4\sum_{k=1}^d \a^{k-1}\notag\\
      &= 4\left(\frac{\a^{d}-1}{\a-1}\right)=2\left(\frac{\a^{d}-1}{n-3}\right).\label{al:bcdupper}
\end{align}
Similarly,
\begin{equation} \label{al:bcdlower}  
 b(d)+c(d) \ge 2\left(\frac{\g^{d}-1}{n-4}\right).
\end{equation}
\ee
As $l_H^U(d)=a(d)+b(d)+c(d)$, from \eqref{eq:abounds}, \eqref{al:bcdupper} and \eqref{al:bcdlower},
\begin{equation}\label{eq:lhdu}
\g^d-1\le l_H^U(d)\le \a^d-1, \textrm{ when } n\ge 6.
\end{equation}

In what follows we shall need to know how many elements of $L_H^U(d)$ do not belong to $U\cup \maln_{H}(U)$.  
The elements of $H$ which are not in $\maln_H(U)$ or $U$ are those which belong either to
\begin{itemize}
\item $\la a_{n-1}\ra\la a_{n-2},a_1\ra$ and have support containing $a_{n-2}$, or to
\item  $\la a_{1}\ra\la a_{n-1},a_2\ra$ and have support containing $a_{2}$.
\end{itemize}
As elements of $L_H^U$ have no non-trivial left divisor in $U$, elements of $L_H^U$ which do not belong to $U\cup \maln_{H}(U)$ are those in
$\la a_{n-2},a_1\ra$ which begin with $a_{n-2}^{\pm 1}$, or those in $\la a_{2},a_{n-1}\ra$ which begin with $a_{2}^{\pm 1}$, all of which are in normal form,
as long as they are freely reduced. There
are $4\cdot 3^{p-1}$ elements of length $p$ of these forms, so 
the number $e(d)$ of elements of $L_H^U(d)$ which do not belong to $U\cup \maln_{H}(U)$ is
\begin{equation}\label{eq:notmaln}
  e(d)=2(3^d-1), \textrm{ for all } n\ge 5.
\end{equation}

To bound the size of $L(d,0)$ we simply note  that
\begin{equation}\label{eq:Ld0bound}
  1\le l(d,0)\le l_H(d), \textrm{ for all } d\ge 0.
\end{equation}
Bounds on $l(d,k)$, for $k>0$, are found by considering $\Li(d,k)$ and $\Lii(d,k)$ separately. 
From \eqref{eq:lu}, there are 
\begin{equation*}
\li_S(d,k)=2[1+2d(d+1)]
\end{equation*}
elements $ut^{\pm k}$ of type \ref{it:composed1} in $\Li_S(d,k)$.
Therefore
\begin{equation}\label{eq:l1dk}
\li(d,k)=2k[1+2d(d+1)],
\end{equation}
for all $n\ge 5$.

Elements of $\Lii_S(d,k,r,P)$ have the form $ug_1t^{\a_1}\cdots g_{r}t^{\a_r}$ subject to
the conditions of the definition on page \pageref{it:LiidkrP}.
Here $ug_1\in H$ and $l(ug_1)\le d$ and, as every element of $H$ may be uniquely expressed as $vg$, where $v\in U$ and
$g$ has no left divisor in $U$, there are $l_H(d)$ possibilities for $ug_1$. For
$i>1$, $g_i\in L_H^U$, so there are $l_H^U(d)^{r-1}$ possibilities for $(g_2,\ldots ,g_r)$.
Given the fixed composition $P=(a_1,\ldots ,a_r)$ of $k$, there are $2^r$ choices for $(t^{\a_1},\ldots ,t^{\a_r})$, with $(|\a_1|,\ldots ,|\a_r|)=(a_1,\ldots ,a_r)$.
Therefore
\begin{equation}\label{eq:liidkrP}
  \lii_S(d,k,r,P)=2^rl_H(d)(l_H^U(d))^{r-1}.
\end{equation}

There are $\binom{k-1}{r-1}$ compositions of $k$ into $r$ parts  so 
the number $\lii_S(d,k,r)$ of elements of $\Lii_S(d,k)$ which involve a partition of $k$ into $r$ parts
is
\[\lii_S(d,k,r)=\binom{k-1}{r-1}\lii_S(d,k,r,P).\]
As $\lii_S(d,k)=\sum_{r=1}^k \lii_S(d,k,r)$, \eqref{eq:liidkrP} gives
\begin{align}
  \lii_S(d,k)&= \sum_{r=1}^k\binom{k-1}{r-1} 2^rl_H(d)(l_H^U(d))^{r-1}\notag\\
  &=2l_H(d)(2l_H^U(d)+1)^{k-1}.\notag 
\end{align}
Now
\begin{align}
  \lii(d,k)&=\sum_{p=1}^k \lii_S(d,p),\notag\\
           &= \sum_{p=1}^k 2l_H(d)(2l_H^U(d)+1)^{p-1}\notag\\
           &= 2l_H(d)\frac{(2l_H^U(d)+1)^k-1}{2l_H^U(d)}\notag\\
  &=\frac{l_H(d)}{l_H^U(d)}[(2l_H^U(d)+1)^k-1].\label{eq:liidk}
\end{align}

We are now ready to calculate the asymptotic density of subsets of $L$ which satisfy conditions
\ref{it:main_hyp1} -- \ref{it:main_hyp4} of Theorem \ref{thm:main}.
\subsection{Proof of Proposition {\ref{prop:ad}}}
For each i, from 1 to 4, let $Z_i$ be the subset of
$L$ which satisfies condition i of Theorem \ref{thm:main}. We shall show that $\rho(Z_i)=1$, for $1\le i\le 4$. As a finite
intersection of generic sets is generic, and $L_Y=\cap_{i=1}^4 Z_i$, this proves the proposition. 
\paragraph{Theorem \ref{thm:main}, condition {\ref{it:main_hyp1}}.} An element $s\in L$ satisfies this condition if and only if it
does not belong to $H$.  Thus the intersection of $L(d,k)$ with the set $Z_1^c$
of these failing elements is $L(d,0)$. From \eqref{eq:Ld0bound}, the asymptotic density of the set $Z_1^c$ 
is
\[\rho(Z_1^c)\le \lim_{d,k\maps \infty} \frac{l_H(d)}{l(d,k)},\]
and from \eqref{eq:lhd_5}, \eqref{eq:LHbound}, \eqref{eq:liidk} and the definition of $L(d,k)$,  it follows that $\rho(Z_1^c)=0$.
Therefore, $\rho(Z_1)=1$; that is the set of elements of $L$ satisfying condition {\ref{it:main_hyp1}} has asymptotic density $1$. 

\paragraph{Theorem \ref{thm:main}, condition {\ref{it:main_hyp2}}.}
There are no $t$-thick elements in $L(d,0)$
%
and elements of $\Li$, are 
by definition, all $t$-thick.
Thus the number of $t$-thick elements in $L(d,k)$ is $z_2(d,k)=\li(d,k)+z_2^{(ii)}(d,k)$, where $z_2^{(ii)}(d,k)$ is the
number of $t$-thick elements in $\Lii(d,k)$.  
To find the number of $t$-thick   elements in $\Lii(d,k)$, 
 begin by considering the $t$-thick
  elements of $\Lii_S(d,k,r,P)$. An element
 $w=ug_1t^{\a_1}\cdots g_rt^{\a_r} \in \Lii_S(d,k,r,P)$ fails to be 
 $t$-thick only if at least one of the $g_i$ is not in $\maln_H(U)$.
For $i\ge 2$, $g_i$ is an element of $L_H^U(d)$ and, from  \eqref{eq:notmaln},
there are $e(d)=2(3^d-1)$ elements of $L_H^U(d)$ which are not in $\maln_H(U)$. Therefore, there are
$t_H^U(d)=l_H^U(d)-2(3^d-1)$ possible choices for each $g_i$ in the $t$-thick element $w$.

Now consider $ug_1$. 
By definition, $l(ug_1)\le d$ and supposing that $u$ has length $s$, where $1\le s\le d-1$, from \eqref{eq:notmaln} and the
fact that there are $4s$ elements of $U$ of length $s$, 
 there are $4s\cdot e(d-s)$ possible $ug_1$, with $g_1\notin \maln_{H}(U)$. Summing over $s$, such that $0\le s\le d-1$,
the total possible number of such $ug_1$ is $e'(d)$ where
\begin{align}
  e'(d)&=e(d)+4\sum_{s=1}^{d-1}s\cdot e(d-s)\notag\\
  &=e(d)+8\sum_{s=1}^{d-1} s(3^{d-s}-1)\notag\\
  &=2(3^d-1)+8\left(\frac{3(3^d-1)-6d}{4}\right)-4d(d-1)  \notag\\
  &= 8(3^d-1)-4d(2+d).\notag
\end{align}
The element $ug_1$ is an arbitrary element of $L_H(d)$, so there are $t_H(d)=l_H(d)-e'(d)$ possible choices for $ug_1$ in the $t$-thick element $w$
of $\Lii_S(d,k,r,P)$.
Combining these facts, as in the calculation of $\lii(d,k)$, we see that the number $z_2^{(ii)}(d,k)$ of $t$-thick elements in $\lii(d,k)$ is
\begin{equation*}
z_2^{(ii)}(d,k)=\frac{t_H(d)}{t_H^U(d)}[(2t_H^U(d)+1)^k-1].
\end{equation*}
As the number of $t$-thick elements in $L(d,k)$ is  $z_2(d,k)=\li(d,k)+z^{(ii)}_2(d,k)$, 
\begin{align}
   \rho(Z_2)&=\lim_{d,k\maps \infty} \frac{z_2(d,k)}{l(d,k)}= 
 \frac{\li(d,k)+z^{(ii)}_2(d,k)}{l(d,k)}\notag\\
           &\ge \frac{\li(d,k)+z^{(ii)}_2(d,k)}{l_H(d)+\li(d,k)+\lii(d,k)},\textrm{ from \eqref{eq:Ld0bound}}\notag\\
  &= \frac{[\li(d,k)\cdot t_H^U(d)+t_H(d)((2t_H^U(d)+1)^k-1)]l_H^U(d)}{[(l_H(d)+\li(d,k))l_H^U(d)+l_H(d)((2l_H^U(d)+1)^k-1)]t_H^U(d)}.\notag
\end{align}
First note that
\[\frac{l_H^U(d)}{t_H^U(d)}=\frac{l_H^U(d)}{l_H^U(d)-e(d)}=1+\frac{e(d)}{l_H^U(d)-e(d)}.\]
If $n=5$ then, from \eqref{eq:lhdu_5} and \eqref{eq:notmaln}, 
\[\frac{e(d)}{l_H^U(d)-e(d)}=\frac{2(3^{d}-1)}{3^{d-1}(3+2d)-2(3^d-1)},\]
which has limit $0$ as $d,k\maps \infty$, so 
\begin{equation}\label{eq:ltlim}
  \lim_{d,k\maps \infty}\frac{l_H^U(d)}{t_H^U(d)}=1.
\end{equation}
If $n\ge 6$ then $\g^d-1\le l_H^U(d)$, and $\g\ge 5$, from \eqref{eq:lhdu},
so $e(d)/(l_H^U(d)-e(d))$ has limit $0$ as $d,k\maps \infty$, and \eqref{eq:ltlim} holds again.

Next, 
\[ \lim_{d,k\maps \infty}\frac{\li(d,k)\cdot t_H^U(d)}{(l_H(d)+\li(d,k))l_H^U(d)+l_H(d)((2l_H^U(d)+1)^k-1)}=0,\]
for all $n\ge 5$.
Finally, as
\[\lim_{d,k\maps \infty} \frac{(l_H(d)+\li(d,k))l_H^U(d)}{(2l_H^U(d)+1)^k-1}=0,\]
\begin{align*}
  \lim_{d,k\maps \infty} &
  \frac{t_H(d)((2t_H^U(d)+1)^k-1)}{(l_H(d)+\li(d,k))l_H^U(d)+l_H(d)((2l_H^U(d)+1)^k-1)]}\\
  &=\lim_{d,k\maps \infty}\frac{t_H(d)((2t_H^U(d)+1)^k-1)}{l_H(d)((2l_H^U(d)+1)^k-1)]}.
\end{align*}
We have
\[\frac{t_H(d)}{l_H(d)}=\frac{l_H(d)-e'(d)}{l_H(d)}=1-\frac{e'(d)}{l_H(d)}.\]
If $n=5$,
\[\frac{e'(d)}{l_H(d)}=\frac{8(3^d-1)-4d(2+d)}{1+8d\cdot 3^{d-1}},\]
which has limit $0$ as $d,k\maps \infty$.

If $n\ge 6$, then from \eqref{eq:LHbound}, $l_H(d)\ge  (n-1)(\g^{d}-1)/(n-4)$ so again $e'(d)/l_H(d)$
has limit $0$.
Therefore in all cases
\[\lim_{d,k\maps \infty}\frac{t_H(d)}{l_H(d)} =1.\]

Also, it follows from \eqref{eq:ltlim} that
\[\lim_{d,k\maps \infty}\frac{(2t_H^U(d)+1)^k-1}{(2l_H^U(d)+1)^k-1}=1.\]
Combining all the above we see that $\rho(Z_2)=1$: that is  the set of elements of $L$ which satisfy Theorem \ref{thm:main}.\ref{it:main_hyp2} has
asymptotic density $1$. 
\paragraph{Theorem \ref{thm:main}, condition {\ref{it:main_hyp3}}.}
A word in $L$ represents an element of $\la \st(t)\ra$ if and only if it belongs to $L_U$ or $\Li$. Therefore, denoting by $Z_3^c$ the set
of elements of $L$ which do not satisfy Theorem \ref{thm:main}, condition {\ref{it:main_hyp3}}, 
$\rho(Z_3^c)$ is the limit of $[l_U(d)+\li(d,k)]/l(d,k)$ as $d,k\maps \infty$. As this limit is $0$, it follows that $\rho(Z_3)=1$.   
\paragraph{Theorem \ref{thm:main}, condition {\ref{it:main_hyp4}}}
If $w$ in $\GG'$ is not a $t$-root we say it is a $t$\emph{-power}. 
An element $g\in L(d,0)$ has $\s(g)\in D$, so $\s(g)$ is a generator, or the inverse of a generator,
of the factor $\FF(D^+)$ of $F_D$; so $\s(g)$ is not a proper power. Consequently  there are no $t$-powers in $L(d,0)$.  
We shall count the number of elements of $L(d,k)$ which are $t$-powers and are in $\Li$ and
in $\Lii$ 
separately, and show that both
have asymptotic density zero, from which it follows that the set of elements $Z_4^c$ of $L$ which do not satisfy condition \ref{it:main_hyp4} 
 has asymptotic density zero. 
%
  Elements $ut^{\pm k}$ of $\Li$ are all $t$-powers. There are $2[1+d(d+1)]$ of these in $L(d,k)$, from \eqref{eq:l1dk},  and 
  it follows from  \eqref{eq:liidk} and the definition of $L(d,k)$, that 
 the set of $t$-powers that are in $\Li$ has asymptotic density zero.

 Let $p^{(ii)}(d,k)$ denote the number of $t$-powers in $\Lii(d,k)$. 
 If $w=ug_1t^{\a_r}\cdots g_rt^{\a_r}\in \Lii_S(d,k,r,P)$ 
 is a $t$-power then there are elements $d_i\in D$ and $u_i\in U$ and an integer $p$,  such that
 $1\le p\le \floor{r/2}$, $r=pq$, $w=_F ud_1u_1t^{\a_1}\cdots d_ru_rt^{\a_r}$ and $\s(w)=d_1t^{\a_1}\cdots d_rt^{\a_r}=(d_1t^{\a_1}\cdots d_pt^{\a_p})^q$.
 (The $d_i$ are fixed generators of the free group $\FF(D^+)$ (or their inverses).)
  In this case it follows  that $P=(|\a_1|,\ldots ,|\a_r|)=P_p^q$, where $P_p=(|\a_1|,\ldots ,|\a_p|)$ is a composition of $q=r/p$ into $p$ parts.
 Conversely, if there exist positive integers $p$, $q$ such that $p=r/q$, $2\le q\le r$ and $P=P_p^q$ for some composition $P_p$ of $q$ into $p$ parts, then for all
 \begin{itemize}
   \item elements 
     $d_i\in D$ and  $u, u_{i,j} \in U$ such that $1\le i\le p$ and $1\le j\le q$, with 
     \item $l(ud_1u_{1,1})\le d$, $l(d_iu_{i,j})\le d$ and $[u_{i,j},d_i]\neq 1$, and
     \item all $(\a_1,\ldots, \a_p)$ such that  $P_p=(|\a_1|,\ldots, |\a_p|)$;
     \end{itemize}
     \[
       w=ud_1u_{1,1}t^{\a_1}d_2u_{1,2}t^{\a_2}\cdots d_pu_{1,p}t^{\a_p}\cdots t^{\a_p}d_1u_{q,1}t^{\a_1}\cdots d_pu_{q,p}t^{\a_p}
     \]
     is a $t$-power in $L(d,k,r,P)$.
     Here $w$ has initial subword $w_p=  ud_1u_{1,1}t^{\a_1}d_2u_{1,2}t^{\a_2}\cdots d_pu_{1,p}t^{\a_p}$ with normal form in $\Lii_S(d,q,p,P_p)$ and
     $\s(w_p)$ is the $q$th root of $\s(w)$. Therefore the $t$-power $w$ is determined by the element $w_p\in \Lii_S(d,q,p,P_p)$ and the
     sequence $(u_{2,1},\ldots ,u_{2,p},\ldots u_{q,1},\ldots, u_{q,p})$ of length $p(q-1)=k-p$, of elements of $U$ (each of length at most $d$).
     There are $(4d)^{k-p}$ sequences of length $k-p$, of such elements of $U$, so writing 
     $p^{(ii)}(d,k,r,P)$ for the number of $t$-powers in $\Lii_S(d,k,r,P)$, and assuming an appropriate composition $P_p$ of $r/p$, exists for each
     $p$ such that $1\le p\le \floor{r/2}$, we have
     \begin{align}
       p^{(ii)}(d,k,r,P)&\le \sum_{p=1}^{\floor{r/2}}(4d)^{k-p}\lii_S(d,q,p,P_p)\notag\\
                   &= 2^kl_H(d)\sum_{p=1}^{\floor{r/2}}(2d)^{k-p}(l_H^U(d))^{p-1}\textrm{ from } \eqref{eq:liidkrP},\notag\\
       &\le 2^k(2d)^{k-\floor{r/2}}l_H(d)\frac{(l_H^U(d))^{\floor{r/2}}}{l_H^U(d)-2d},\notag
     \end{align}
                 Summing over all compositions of $k$ into $r$ parts and over $r$ from $1$ to $k$, and setting $a=l_H^U(d)$, $b=l_H(d)$ and $c=2d$,
                 the number $p^{(ii)}_S(d,k)$ of $t$-powers in
$\Lii_S(d,k)$ satisfies
\begin{align}
  p^{(ii)}_S(d,k)&\le \frac{2^k b}{a-c}\sum_{r=1}^k \binom{k-1}{r-1} c^{k-\floor{r/2}}a^{\floor{r/2}}\notag\\
                 &\le \frac{2^k b}{a-c}\sum_{r=1}^k \binom{k-1}{r-1} c^{(2k-r+1)/2}a^{r/2}\notag\\
                 &= \frac{2^k a^{1/2}bc^{(k+1)/2}}{a-c}\sum_{r=1}^k \binom{k-1}{r-1} c^{(k-r)/2}a^{(r-1)/2}\notag\\
                 &= \frac{2^k a^{1/2}bc^{(k+1)/2}}{a-c}(a^{1/2} + c^{1/2})^{k-1}.\notag
\end{align}
Therefore the number $p^{(ii)}(d,k)$ of $t$-powers in $\Lii(d,k)$, satisfies
\begin{align}
  p^{(ii)}(d,k)&\le \frac{a^{1/2}b}{a-c}\sum_{p=1}^k 2^pc^{(p+1)/2}(a^{1/2} + c^{1/2})^{p-1}\notag\\
               &=\frac{2a^{1/2}bc}{a-c}\sum_{p=1}^k [2c^{1/2}(a^{1/2} + c^{1/2})]^{p-1}\notag\\
  &\le\frac{2a^{1/2}bc[2c^{1/2}(a^{1/2} + c^{1/2})]^{k}}{(a-c)[2c^{1/2}(a^{1/2} + c^{1/2})-1]}.\notag
\end{align}
In current notation, from \eqref{eq:liidk}, $l(d,k)\ge \lii(d,k)=b[(2a+1)^{k}-1]/a$, so
\begin{align*}
  \frac{p^{(ii)}(d,k)}{l(d,k)}&\le \frac{2a^{1/2}bc[2c^{1/2}(a^{1/2} + c^{1/2})]^{k}a}{(a-c)[2c^{1/2}(a^{1/2} + c^{1/2})-1]b[(2a+1)^{k}-1]}\notag\\
                              &=\frac{2^{k+1}a^{3/2}c^{(k+2)/2}[a^{1/2} + c^{1/2}]^{k}}{(a-c)[2c^{1/2}(a^{1/2} + c^{1/2})-1][(2a+1)^{k}-1]}\notag\\
                              &\le \frac{a^{3/2}c[(ca)^{1/2}+c]^k}{(a-c)[c^{1/2}(a^{1/2}+c^{1/2})-1/2]a^k}\\
  &=\frac{(c/a)}{[1-(c/a)][c^{1/2}(1+(c/a)^{1/2})-(1/2a^{1/2})]}[(c/a^{1/2})+(c/a)]^k.
\end{align*}
For $n=5$ and $n\ge 6$, $c/a^{1/2}$ tends to $0$ as $d\maps \infty$ and so,  
%
 in both cases, it can be seen that $\lim_{d,k\maps \infty}\frac{p^{(ii)}(d,k)}{l(d,k)}=0$. Therefore the set of $t$-powers in $\Lii$
has asymptotic density $0$ and we conclude that $\rho(Z_4)=1$. 

The conclusion is that  the set of elements for which all the hypotheses of Theorem \ref{thm:main} hold has asymptotic density $1$. That is 
 Theorem \ref{thm:main} holds on a generic subset of $L$. 

\end{document}